\documentclass[12pt]{article}
\usepackage{cite}
\usepackage{amsmath,amssymb,amsthm,mathrsfs}
\usepackage{titlesec,hyperref}
\usepackage{color}
\usepackage{fancyhdr}
\usepackage{cases}
\usepackage[margin=2.5cm]{geometry}
\usepackage{enumerate}
\usepackage[integrals]{wasysym}
\headsep=2mm \headheight=2mm \topmargin=0pt \oddsidemargin=0pt
\evensidemargin=1pt \textheight=230mm \textwidth=164mm
\usepackage{bm}
\usepackage{graphicx}
\usepackage{caption}
\usepackage{float}
\usepackage{subfigure}
\usepackage{indentfirst}
\newcommand{\dd}{\mathop{}\!\mathrm{d}}
\let\del\partial

\newcommand{\MM}{\mathring M}
\newcommand{\MMM}{\mathring{\bar M}}
\newcommand{\RR}{\mathring R}
\newcommand{\R}{\mathbb R}
\newcommand{\ZZ}{\mathbb Z}
\newcommand{\RRR}{\mathring{\bar R}}
\newcommand{\ootimes}{\mathbin{\mathring{\otimes}}}

\let\div\relax
\DeclareMathOperator{\div}{div}

\newcommand{\vin}[0]{v^{\textup{in}}}
\newcommand{\bin}[0]{b^{\textup{in}}}
\newcommand{\pex}[0]{\textsf{\textit{p}}}
\newcommand{\vex}[0]{\textsf{\textit{v}}}
\newcommand{\vv}[0]{\bar v}
\newcommand{\bb}[0]{\bar b}

\newcommand{\ppp}[0]{\bar{\bar p}}
\newcommand{\pp}[0]{\bar p}

\newcommand{\PP}[0]{\mathbb{P}_{\neq 0}}

\newcommand{\bex}[0]{\textsf{\textit{b}}}

\newcommand{\Rosc}{R_{q+1}^\textup{osc}}

\newcommand{\Mosc}{M_{q+1}^\textup{osc}}
\newcommand{\Mlin}{M_{q+1}^\textup{lin}}
\newcommand{\Rlin}{R_{q+1}^\textup{lin}}
\newcommand{\TT}[0]{\mathsf{T}}
\newcommand{\TTT}[0]{\mathbb{T}}
\newcommand{\e}{\epsilon}
\def\dashint{\,\ThisStyle{\ensurestackMath{%
  \stackinset{c}{.2\LMpt}{c}{.5\LMpt}{\SavedStyle-}{\SavedStyle\phantom{\int}}}%
  \setbox0=\hbox{$\SavedStyle\int\,$}\kern-\wd0}\int}
\def\ddashint{\,\ThisStyle{\ensurestackMath{%
  \stackinset{c}{.2\LMpt}{c}{.5\LMpt+.2\LMex}{\SavedStyle-}{%
    \stackinset{c}{.2\LMpt}{c}{.5\LMpt-.2\LMex}{\SavedStyle-}{%
      \SavedStyle\phantom{\int}}}}\setbox0=\hbox{$\SavedStyle\int\,$}\kern-\wd0}\int}
      \setlength{\parindent}{2em}

\usepackage{scalerel,stackengine}
\stackMath
\newcommand\widecheck[1]{%
\savestack{\tmpbox}{\stretchto{%
  \scaleto{%
    \scalerel*[\widthof{\ensuremath{#1}}]{\kern-.6pt\bigwedge\kern-.6pt}%
    {\rule[-\textheight/2]{1ex}{\textheight}}
  }{\textheight}%
}{0.5ex}}%
\stackon[1pt]{#1}{\scalebox{-1}{\tmpbox}}%
}
\newcommand{\p}{\phi_{(\gamma,\frac{1}{2},k)}}
\newcommand{\pd}{\phi_{(\gamma,\frac{1}{2},k')}}
\newcommand{\g}{g_{(2,\sigma)}}
\newcommand{\coloneq}{\mathrel{\mathop:}=}

\DeclareMathOperator{\curl}{curl}



\newtheorem{thm}{Theorem}[section]
\newtheorem{cor}[thm]{Corollary}

\newtheorem{lem}[thm]{Lemma}
\newtheorem{prop}[thm]{Proposition}
\theoremstyle{definition}
\newtheorem{defn}[thm]{Definition}

\theoremstyle{remark}
\newtheorem{rem}[thm]{Remark}
\numberwithin{equation}{section}

\linespread{1.1}
\allowdisplaybreaks

\begin{document}
\title{Sharp and strong non-uniqueness for the magneto-hydrodynamic equations}

\author{
Yao $\mbox{Nie}^{1}$ \footnote{Institute  of Applied Physics and Computational Mathematics, email: nieyao930930@163.com}
\quad and\quad
Weikui $\mbox{Ye}^{1}$ \footnote{Institute  of Applied Physics and Computational Mathematics, email:
904817751@qq.com}
}
\date{}
\maketitle
\begin{abstract}
In this paper, we prove a sharp and strong non-uniqueness for a class of weak solutions to the three-dimensional  magneto-hydrodynamic (MHD) system. More precisely, we show that any weak solution $(v,b)\in L^p_tL^{\infty}_x$ is non-unique in $L^p_tL^{\infty}_x$ with $1\le p<2$, which reveals the strong non-uniqueness, and the sharpness in terms of the classical Ladyzhenskaya-Prodi-Serrin criteria at endpoint $(2, \infty)$. Moreover, for any $\epsilon>0$, we can also construct infinitely many  weak solutions of the ideal MHD system in $L^1_tC^{1-\epsilon}_x$. Our result shows the non-uniqueness for any weak solution $(v,b)$ including non-trivial magnetic field $b$.
\end{abstract}

\section{Introduction}
In many natural phenomena, magnetic fields are closely related to a large number of natural and man-made fluids. The model of magneto-hydrodynamics (MHD) system is already widely used to  study the interaction between magnetic fields and moving, conducting flows.  In this paper, we consider the MHD equations on the torus $\TTT^3=\R^3/\ZZ^3$:
\begin{equation}
\left\{ \begin{alignedat}{-1}
&\del_t v-\Delta v+ \div (v\otimes v)  +\nabla p   =\div (b\otimes b) ,
 \\
 &\del_t b-\Delta b+\div (v\otimes b)     = \div (b\otimes v) ,
 \\
 & \div v = 0,~\div b= 0,
\end{alignedat}\right.  \label{mhd}
\end{equation}
associated with mean-free initial data $(v,b) |_{t=0}=(v_0, b_0)$, where $v(x,t)$, $b(t,x)$  and $p(x,t)$ represent the fluid velocity,  the magnetic field  and the scalar pressure,  respectively.

When $b=0$, system \eqref{mhd} is reduced to the classical incompressible Navier-Stokes equations. As is known, Leray \cite{Le34} proved global existence  of weak solutions for  the Navier-Stokes equations  with initial data $v_0\in L^2(  \R^3)$. And this result was further developed by Hopf \cite{Ho51} in general domains. We call these solutions introduced in \cite{Ho51, Le34} as Leray-Hopf weak solutions. Although the existence theory of Leray-Hopf weak solutions has been established for nearly one hundred years,  the uniqueness and regularity of the weak solutions for the 3D Navier-Stokes equations  remain open. Therefore, a large number of mathematical
researchers are devoted to seeking the uniqueness and regularity criteria to show how far we are from resolving
this challenging problem. One of the classical regularity criteria is the Ladyzhenskaya-Prodi-Serrin criterion, which reveals that a Leray-Hopf weak solution $v$ of the 3D Navier-Stokes equations is  regular,  provided
$v\in\mathbf{X}^{p,q}$ satisfies  \cite{ESS, GKP, Serrin}:
\begin{equation}
\frac{2}{p}+\frac{3}{q}\leq1,~~~\mathbf{X}^{p,q}:=\left\{ \begin{alignedat}{-1}
L^p([0,T];L^q(\Omega)),~~p\neq\infty,
 \\
 C([0,T];L^q(\Omega)),~~p=\infty.
\end{alignedat}\right.  \label{serrinzhunze}
\end{equation}
 where $\Omega$ can be the whole space $\R^3$, a bounded domain or a periodic area $\TTT^3$. As a matter of fact, the Ladyzhenskaya-Prodi-Serrin criterion can also serve as a sufficient condition for the uniqueness of $L^2_{t,x}([0, T]\times\R^3)$ weak solution. We refer readers to \cite{FJR, LionsMas, FLT} and references therein. For periodic domain $\TTT^3$, Cheskidov and Luo \cite{1Cheskidov} proved that any $L^2_{t,x}$ weak solution in $X^{p,q}$ with $\tfrac{2}{p}+\frac{3}{q}\leq1$ of the Navier-Stokes equations is a Leray-Hopf weak solution  and regular (also unique). This positive result naturally makes one consider whether a $L^2_{t,x}$ weak solution is unique or not in $X^{p,q}$ with $\frac{2}{p}+\frac{3}{q}>1 $. Recently, Cheskidov-Luo \cite{1Cheskidov} answered this question and their result shows the sharpness of the Ladyzhenskaya-Prodi-Serrin criteria $\frac{2}{p}+\frac{3}{q}\le 1$ at the endpoint $(p, q)=(2,\infty)$ in the periodic setting.

The same as for the 3D Navier-Stokes equations, the regularity and uniqueness of the Leray-Hopf weak solution for the 3D MHD equations remain unsolved. The existence of Leray-Hopf weak solution of system \eqref{mhd} was proved by Wu \cite{67Wu}. The definition of Leray-Hopf weak solution of system \eqref{mhd} on the periodic domain $\TTT^3$ is as follows:
\begin{defn}
Let $(v_0, b_0)\in L^2(\TTT^3)$\footnote{Here and throughout the paper, we denote $(f , g)\in X\times X$ by $(f, g)\in X$ and $\|f\|^{n}_{X}+\|g\|^{n}_{X}$ by $\|(f,g)\|^{n}_{X}$.}  with zero mean and $\div v_0=\div b_0=0$.   $(v, b, p)$ is called a \emph{Leray-Hopf}  weak solution of system \eqref{mhd} if
\begin{itemize}
  \item [$\bullet$]$(v, b, p)$ satisfies equations \eqref{mhd} in the distribution sense. Moreover,
      \[\big(v(t), b(t)\big)\to(v_0, b_0) \,\,\text{weakly}\,\,\text{in}\,\,L^2(\TTT^3)\,\,\text{as}\,t\to 0.\]
  \item [$\bullet$] $(v, b)\in L^\infty_{\text{loc}}\big(\R^+; L^2(\TTT^3)\big)\cap L^2_{\text{loc}}\big(\R^+; H^1(\TTT^3)\big)$.
  \item [$\bullet$] For a.e. $t>0$,
  \begin{align*}
&\|(v(t), b(t))\|^2_{L^2(\TTT^3)}+2\int_0^t\|(\nabla v( s),\nabla b(s))\|^2_{L^2(\TTT^3)}\dd s
\le\|(v_0, b_0)\|^2_{L^2(\TTT^3)}.
  \end{align*}
\end{itemize}
\end{defn}
With respect to MHD equations, there have been established some Ladyzhenskaya-Prodi-Serrin-type criteria, e.g. \cite{serrin0,serrin1,serrin2,serrin3,serrin4}. One can claim that the uniqueness for the Leray-Hopf weak solution $(v,b)$ of system \eqref{mhd} if $(v,b)\in\mathbf{X}^{p,q}$ with $\frac{2}{p}+\frac{3}{q}\le 1$ on the whole space $\R^3$. Actually, for a $ L^2_{t,x}$ weak solution $(v,b)$ in the periodic setting, one can also obtain the uniqueness result provided that $(v, b)$ satisfies  Ladyzhenskaya-Prodi-Serrin criteria \eqref{serrinzhunze}. Here a $ L^2_{t,x}$ weak solution $(v, b)$ of  system \eqref{mhd} is defined as follows:
\begin{defn}\label{def} Let $(v_0, b_0)\in L^2(\TTT^3)$ be divergence-free in the sense of distributions and have zero mean\footnote{Throughout this paper, ``zero mean'' is ``zero spatial mean''.}. We say that  $(v,b)\in L^2([0, T]\times\TTT^3)$ is a \emph{weak solution}  to the MHD equations~\eqref{mhd} if \begin{itemize}
     \item [(1)] For a.e. $t\in [0,T]$, $(v(\cdot, t), b(\cdot, t))$ is divergence-free in the sense of distributions;
     \item [(2)]For all divergence-free test functions $\phi\in C^\infty_0([0,T)\times \mathbb T^3)$,
     \begin{align}\nonumber
\int_0^T \int_{\mathbb T^3} (\del_t-\Delta)\,\phi v+\nabla \phi : (v\otimes v-b\otimes b) \dd x \dd t=  -\int_{\mathbb T^3} v_0 \phi(0,x)\dd x,\\
\int_0^T \int_{\mathbb T^3} (\del_t-\Delta)\,\phi b+\nabla \phi : (v\otimes b-b\otimes v) \dd x \dd t=-\int_{\mathbb T^3}b_0\phi(0,x)\dd x.\nonumber
\end{align}
   \end{itemize}
\end{defn}
Throughout this paper, we call $(v,b)$ a \emph{weak solution} in the sense of Definition \ref{def}. Taking advantage of the proof of Theorem \ref{Serrin} in \cite{1Cheskidov}, one can infer the following result:
\begin{prop}\label{Serrin}
Let $(v,b)$ be a weak solution of the MHD system \eqref{mhd}. If $(v,b)\in X^{p,q}$ with $\frac{2}{p}+\frac{3}{q}\le 1$,  then
\begin{itemize}
  \item [(1)]$(v,b)$ is unique in the class of $X^{p,q}$ weak solutions,
  \item [(2)]$(v,b)$ is a Leray-Hopf solution, and regular on $(0, T]$.
\end{itemize}
\end{prop}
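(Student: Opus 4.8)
The plan is to transcribe the argument of Cheskidov--Luo \cite{1Cheskidov} for the Navier--Stokes equations. The only structural input used there is that the nonlinearity is the divergence of a tensor that is \emph{bilinear} in the unknown; for \eqref{mhd} the tensors $v\otimes v$, $b\otimes b$, $v\otimes b$, $b\otimes v$ are all bilinear in the pair $(v,b)$, so every heat–semigroup/Leray–projector estimate used for Navier--Stokes applies once one treats $(v,b)$ as a single vector unknown, the $v$-- and $b$--coupling being harmless because each nonlinear term is a product of two components of $(v,b)$.

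\textbf{From weak to mild, and regularity.} Since $\tfrac2p+\tfrac3q\le1$ forces $p\in[2,\infty]$ and $q\in[3,\infty]$, the products $v\otimes v$, $b\otimes b$, $v\otimes b$ lie in $L^{p/2}_tL^{q/2}_x$, so their distributional divergences lie in $L^{p/2}_tW^{-1,q/2}_x$ and the Duhamel integrals below converge. Testing Definition~\ref{def} against $s\mapsto e^{(t-s)\Delta}\mathbb P\psi$ (with $\mathbb P$ the Leray projector and $\psi$ a divergence-free test function) and removing the mollification shows that $(v,b)$ is a mild solution:
\begin{align*}
v(t)&=e^{t\Delta}v_0-\int_0^t e^{(t-s)\Delta}\mathbb P\,\div\!\big(v\otimes v-b\otimes b\big)(s)\dd s,\\
b(t)&=e^{t\Delta}b_0-\int_0^t e^{(t-s)\Delta}\mathbb P\,\div\!\big(v\otimes b-b\otimes v\big)(s)\dd s.
\end{align*}
Feeding the $L^p_tL^q_x$ bound of $(v,b)$ into these identities and using the $L^a_x\to L^c_x$ smoothing of $e^{\tau\Delta}\mathbb P\,\div$ together with \Holder in time, one upgrades $(v,b)$ to $L^{p_1}_tL^{q_1}_x$ on any $[\tau_0,T]$ with strictly better exponents; iterating reaches $L^\infty_{t,\mathrm{loc}}\big((0,T];L^\infty_x\big)$, and differentiating the Duhamel formula then gives $(v,b)\in C^\infty\big((0,T]\times\TTT^3\big)$. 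This is the classical Serrin-type bootstrap and yields the regularity claim in part~(2).

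\textbf{Leray--Hopf property.} For $0<\tau<t$ the equations are now classical; testing with $(v,b)$ and integrating by parts, all nonlinear contributions cancel, giving the energy identity
\[
\|(v(t),b(t))\|_{L^2}^2+2\int_\tau^t\|(\nabla v,\nabla b)\|_{L^2}^2\dd s=\|(v(\tau),b(\tau))\|_{L^2}^2,
\]
so $t\mapsto\|(v(t),b(t))\|_{L^2}^2$ is non-increasing on $(0,T]$ and in particular $(v,b)\in L^\infty_{\mathrm{loc}}\big(\R^+;L^2\big)\cap L^2_{\mathrm{loc}}\big(\R^+;H^1\big)$. Passing $\tau\to0^+$: $e^{\tau\Delta}(v_0,b_0)\to(v_0,b_0)$ in $L^2$, while the two Duhamel terms tend to $0$ in $L^2$ as $\tau\to0^+$ — an elementary \Holder estimate in $s$ in the strictly subcritical range $\tfrac2p+\tfrac3q<1$, and the finer Lorentz-space version of \cite{1Cheskidov} in the critical cases. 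Hence $(v(\tau),b(\tau))\to(v_0,b_0)$ strongly in $L^2$, which with the identity above yields the Leray--Hopf energy inequality down to $t=0$ and (weak) $L^2$-continuity at $0$; this gives part~(2).

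\textbf{Uniqueness in $X^{p,q}$ and the main obstacle.} If $(v_1,b_1),(v_2,b_2)\in X^{p,q}$ are two weak solutions with the same data, their difference $(w,c)$ solves the linear Duhamel equation obtained by polarizing the bilinear terms around $(v_i,b_i)$; on a short interval $[0,\delta]$ the associated bilinear operator on $X^{p,q}([0,\delta])$ is controlled by a constant times $\|(v_1,b_1)\|_{X^{p,q}([0,\delta])}+\|(v_2,b_2)\|_{X^{p,q}([0,\delta])}$, which is $<\tfrac12$ for $\delta$ small (absolute continuity of the norm when $p<\infty$), so absorbing forces $(w,c)\equiv0$ on $[0,\delta]$, and a covering/continuation argument propagates this to $[0,T]$, proving part~(1). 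The single genuine difficulty is the \emph{critical endpoint} $\tfrac2p+\tfrac3q=1$, in particular $(p,q)=(2,\infty)$ and $(p,q)=(\infty,3)$, where the scaling-invariant bilinear estimate has no gain in $\delta$ and the naive time-\Holder bounds in the previous two steps just fail; there one must invoke the sharper Lorentz/Besov estimates and the refined uniqueness argument of \cite{1Cheskidov}. Everything else is a routine transcription.
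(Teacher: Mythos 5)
Your proposal is correct and follows exactly the route the paper intends: the paper offers no independent proof of this proposition, merely noting that the argument of Theorem 1.3 in Cheskidov--Luo \cite{1Cheskidov} transfers to \eqref{mhd}, and your write-up is precisely that transcription (mild formulation, Serrin bootstrap, energy cancellation of the MHD coupling terms, short-time absorption for uniqueness), with the genuine difficulty --- the critical cases $\tfrac2p+\tfrac3q=1$, where one must import the Lorentz/Besov refinements of \cite{1Cheskidov} --- correctly identified and deferred to that reference. No gaps beyond what the paper itself leaves to the citation.
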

The results in \cite{1Cheskidov} imply the non-uniqueness of weak solutions of the MHD system \eqref{mhd} in the class $L^p_tL^\infty_x(1\le p<2)$ with the trivial magnetic field $b=0$ on periodic domain $\TTT^3$. One natural question is:
\vskip 2mm
\emph{For $1\le p<2$, is a weak solution $(v, b)\in L^p([0,T]; L^\infty(\TTT^3))$ of system \eqref{mhd} unique in $ L^p([0,T]; L^\infty(\TTT^3))$  for the non-trivial magnetic field $b$?}
\vskip 2mm
Inspired by \cite{1Cheskidov, 2018Wild, 2Beekie}, we aim to solve the above problem and obtain the non-uniqueness results via  convex integration scheme. In recent years, the  non-uniqueness problems of weak solutions to hydrodynamic
models have caught researchers' interest and some progress has been made on related issues. For example, with respect to the Euler equations, De Lellis and Sz\'{e}kelyhidi  in the pioneering paper \cite{27DeLellis} introduced the convex integration scheme
and constructed the non-unique weak solutions in $L^{\infty}_{t,x}$ with space-time compact support, see also \cite{28DeLellis}. The Onsager conjecture was finally solved in $
C^{\beta}_{x,t}~ (0 < \beta < 1/3)$ by Isett \cite{Isett}. We refer readers to \cite{zbMATH07038033, zbMATH06312794, zbMATH06710292, zbMATH07370998, Rosa2021DimensionOT} and and references therein for more results on non-uniqueness of weak solutions to the Euler equations. For the Navier-Stokes  equations, there were relatively fewer results on the non-uniqueness problems. Buckmaster and Vicol in \cite{13Nonuniqueness} made the first important break-through via a ${L^2_x}$-based intermittent convex integration scheme. Subsequently, Buckmaster, Colombo and  Vicol \cite{2018Wild} proved that the wild solutions can be generated by  $H^3$ initial data.  Cheskidov and Luo \cite{1Cheskidov} exhibited the sharpness of the Ladyzhenskaya-Prodi-Serrin criteria $\frac{2}{p}+\frac{d}{q}\le 1$ at the endpoint $(p,q)=(2,\infty)$.  Albritton, Bru\'{e} and Colombo \cite{leray} proved the non-uniqueness of the Leray-Hopf weak solutions with a special force by skillfully constructing a ``background'' solution which is unstable for the Navier-Stokes dynamics in similarity variables. { In terms of the ideal MHD system}, Faraco, Lindberg and Sz\'{e}kelyhidi
\cite{37Faraco} showed that the  integrability condition $(v,b)\in L^3_{t,x}$ for the magnetic helicity conservation is sharp. In \cite{36Faraco},  Faraco, Lindberg and Sz\'{e}kelyhidi proved that there exist
infinitely many non-trivial weak solutions with compact support in space $L^{\infty}_{t,x}$.  Beekie, Buckmaster and Vicol \cite{2Beekie} constructed distributional solutions in $C_tL^2_x$ breaking the magnetic helicity conservation and essentially showed the non-uniqueness of weak solutions. Recently, there have some non-uniqueness  results of the 3D viscous and resistive MHD equations. For instance, in the framework of $H^{\epsilon}_{t,x}$ with $\epsilon$ sufficiently small, Li, Zeng and Zhang \cite{lyc} proved the non-uniqueness of a generalized MHD system.

Compared with the Navier-Stokes equations,  the anti-symmetric structure $\div(v\otimes b-b\otimes v)$ of \eqref{mhd} gives rise to the major difficulty to obtain the non-uniqueness in $L^p_tL^{\infty}_x$ with $1\le p<2$. More precisely, when dealing with the MHD system \eqref{mhd}, the ``Mikadow flows'' introduced in \cite{1Cheskidov} for the Navier-Stokes equations would not work since the multiple oscillation directions lead to extra errors in oscillation terms. Motivated by \cite{2Beekie,1Cheskidov}, we construct the perturbation by combining the shear intermittent flows with the temporal concentration functions on convex integration scheme, where the shear intermittent flows effectively eliminate the oscillation errors and  the temporal concentration functions help the intermittent shear flows balance the dissipative errors.

Before giving our results, we show two  non-uniqueness definitions introduced in\cite{1Cheskidov}:
\begin{align}
&\bullet~\text{``Weak non-uniqueness'': there exists a non-unique weak solution in the class X.}\notag\\
&\bullet~\text{``Strong non-uniqueness'': any weak solution in the class X is non-unique.}\notag
\end{align}
We can prove the non-uniqueness of weak solutions in a strong sense and our main results are as follows:
\begin{thm}[Sharp and strong non-uniqueness]\label{t:main0}
Let $1\le p<2$. Any weak solution $(v,b)$ of  \eqref{mhd}  in $L^p_tL^{\infty}_x$ is non-unique. Moreover, there exist non-Leray-Hopf weak solutions $(v,b)$ in $L^p_tL^{\infty}_x$.
\end{thm}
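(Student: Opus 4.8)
The plan is to prove Theorem~\ref{t:main0} via an intermittent convex integration scheme tailored to the MHD system, combining shear intermittent flows with temporal concentration, in the spirit of \cite{1Cheskidov,2Beekie}. First I would recast the problem in terms of the \emph{MHD--Reynolds system}
\begin{align*}
&\del_t v-\Delta v+\div(v\otimes v-b\otimes b)+\nabla p=\div\mathring R^{v},\\
&\del_t b-\Delta b+\div(v\otimes b-b\otimes v)=\div\mathring R^{b},\qquad\div v=\div b=0,
\end{align*}
with $\mathring R^{v}$ symmetric trace-free and $\mathring R^{b}$ antisymmetric, and reduce everything to a quantitative \emph{main proposition}: given a rapidly increasing frequency sequence $\lambda_q$ and geometrically decaying amplitudes $\delta_q\to0$, and a smooth solution $(v_q,b_q,\mathring R_q)$ with $\|\mathring R_q\|_{L^1_{t,x}}\lesssim\delta_{q+1}$ plus suitable higher-order bounds, one constructs $(v_{q+1},b_{q+1},\mathring R_{q+1})$ with $\|\mathring R_{q+1}\|_{L^1_{t,x}}\lesssim\delta_{q+2}$ and $\|v_{q+1}-v_q\|_{L^p_tL^\infty_x}+\|b_{q+1}-b_q\|_{L^p_tL^\infty_x}\lesssim\delta_{q+1}^{1/2}$, the perturbations being temporally supported in a neighbourhood of $\{\mathring R_q\neq0\}$. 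Iterating from $q=0$ and summing the geometrically small corrections yields a weak solution in $L^p_tL^\infty_x$ in the sense of Definition~\ref{def}.

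The core is the perturbation $w_{q+1}=\wpq+\wcq+\wtq$, $d_{q+1}=\dpq+\dcq+\dtq$. The principal parts $\wpq,\dpq$ are superpositions of intermittent shear flows — building blocks concentrated near a single spatial direction — each multiplied by a temporal concentration function active only on a time set of small measure; their amplitudes are chosen, via geometric identities adapted separately to the symmetric stress $\mathring R^v_q$ and the antisymmetric stress $\mathring R^b_q$, so that the low-frequency parts of $\div(\wpq\otimes\wpq-\dpq\otimes\dpq)$ and $\div(\wpq\otimes\dpq-\dpq\otimes\wpq)$ cancel $\div\mathring R_q$. The correctors $\wcq,\dcq$ restore $\div w_{q+1}=\div d_{q+1}=0$, and the temporal correctors $\wtq,\dtq$ are designed so that $\del_t\wtq,\del_t\dtq$ absorb the remaining slow part of the oscillation error. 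One then splits
\[
\mathring R_{q+1}=\Rlin+\Rosc+\Rtransport+(\text{corrector errors}),
\]
applies the inverse-divergence operator $\mathcal{R}$ (and its antisymmetric counterpart in the induction equation), and verifies that each term is $\lesssim\delta_{q+2}$ in $L^1_{t,x}$: $\Rosc$ gains $\lambda_q^{-1}$ from $\mathcal{R}$ hitting high frequencies; $\Rtransport$ is small because the shear directions are frozen in time on each cell of a partition of unity adapted to the coarse-scale flow; and $\Rlin$ (containing $\Delta\wpq$ and $\del_t\wpq$) is controlled using the small $L^p_t$-size of the perturbations granted by the temporal concentration — this is precisely where $1\le p<2$ enters. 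Prescribing the energy increment of $(v_{q+1},b_{q+1})$ at each step produces, in the limit, weak solutions with non-monotone energy, hence non-Leray--Hopf solutions in $L^p_tL^\infty_x$.

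The main obstacle is the oscillation error created by the anti-symmetric induction nonlinearity $\div(v\otimes b-b\otimes v)$: the Mikado-type flows used for Navier--Stokes generate cross interactions among several oscillation directions that cannot be synchronised away, which forces the use of shear flows concentrated in a single direction; but shear flows are less intermittent, so the dissipation error $\Delta\wpq$ is larger and can only be absorbed by concentrating the building blocks in time, which in turn reappears in the oscillation and transport estimates. Balancing all of these simultaneously, while still reaching the critical integrability $L^p_tL^\infty_x$ for every $1\le p<2$, is the delicate point and dictates the precise relations among the parameters $(\lambda_q,\delta_q,\sigma,\mu,\dots)$.

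Finally, to upgrade ``weak'' to ``strong'' non-uniqueness, I would fix an arbitrary weak solution $(v,b)\in L^p_tL^\infty_x$ and a small time $T_0>0$ and build a new solution that coincides with $(v,b)$ on $[0,T_0]$ — so that the initial data is preserved and there is no jump at $T_0$. The construction on $[T_0,T]$ starts from a base flow obtained by gluing a short-time classical solution emanating from $(v(T_0),b(T_0))\in L^\infty\subset L^3$ (which exists for a.e.\ $T_0$, a Lebesgue point) to a mollification of $(v,b)$, with the gluing and commutator stresses $L^1$-small; the convex integration of the previous paragraphs is then run on $[T_0,T]$ with all perturbations cut off to vanish near $T_0$ and near $T$. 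Taking at least one perturbation nonzero makes the resulting weak solution distinct from $(v,b)$, and varying the free parameters produces infinitely many such solutions in $L^p_tL^\infty_x$ with the same initial data.
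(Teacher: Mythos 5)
Your overall architecture --- relaxing \eqref{mhd} to a system with a trace-free symmetric Reynolds stress and an antisymmetric magnetic stress, building the perturbation from intermittent shear flows multiplied by temporal concentration functions, splitting the new stress into linear/oscillation/temporal parts, and upgrading to strong non-uniqueness by smoothing the given weak solution at a time $t_0$ where $(v,b)(t_0)\in L^\infty$ --- is the same as the paper's. However, two steps in your final paragraph have genuine gaps.

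First, distinctness and the non-Leray--Hopf claim. ``Taking at least one perturbation nonzero'' does not imply that the limit $(\widetilde v,\widetilde b)$ differs from $(v,b)$: the infinitely many corrections could in principle sum back to the original solution. Your fallback --- prescribing the energy increment at each step --- is not available in this $L^1$-based intermittent scheme, where one only controls $\|(w_{q+1},d_{q+1})\|_{L^2_{t,x}}\lesssim\delta_{q+1}^{1/2}$ up to constants rather than pinning down an energy profile. The paper resolves both issues with one device: its Theorem \ref{t:main} connects the local strong solution $(v^1,b^1)$ emanating from $(v(t_0),b(t_0))$ to a second \emph{explicit} strong solution, the decaying shear flow $v^{(2)}=(0,0,Ae^{-4\pi^2t}\sin(2\pi x_1))$, $b^{(2)}=(0,Ae^{-4\pi^2t}\sin(2\pi x_1),0)$, so that the new solution \emph{equals} this flow on a terminal interval; choosing $A$ large forces $\|(\widetilde v,\widetilde b)(t_{\rm local})\|_{L^2_x}^2>2\|(v_0,b_0)\|_{L^2_x}^2$, which simultaneously violates the energy inequality and distinguishes $(\widetilde v,\widetilde b)$ from $(v,b)$, and varying $A$ gives infinitely many solutions.

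Second, your base flow on $[T_0,T]$ glues the short-time classical solution to ``a mollification of $(v,b)$'' with $L^1$-small commutator stresses. For a weak solution that is merely $L^p_tL^\infty_x$ this is not justified: mollification commutator errors are small only given some modulus of continuity for $(v,b)$, which is exactly what is not assumed. The paper never returns to $(v,b)$ after $t_0$: the constructed solution follows $(v,b)$ on $[0,t_0]$, the strong solution $(v^1,b^1)$ on $[t_0,t_0+2\varepsilon]$, and the explicit shear flow after $t_0+3\varepsilon$; all gluing in the iteration is performed between \emph{exact smooth MHD flows} launched from the iterates' own data at times $t_i$, so no mollification of the rough solution is ever needed.
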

\begin{rem}Given $1\le p<2$, Cheskidov and Luo \cite{1Cheskidov} proved that a weak solution $v$ of the Navier-Stokes equations in  $ L^p(0,T; L^\infty(\TTT^3))$ is not unique\emph{ if $v$ has at least one interval regularity}. Our result shows the non-uniqueness of any weak solution $(v, b)$ without regularity assumption.
\end{rem}

We now give a main theorem of this paper,  which immediately shows Theorem \ref{t:main0}.
\begin{thm}\label{t:main}
Let $(v^{(1)}, b^{(1)})\in C^0([0, \widetilde{T}]; H^3(\TTT^3))$ and $(v^{(2)}, b^{(2)})\in C^0([0, \widetilde{T}]; H^3(\TTT^3)$ be two strong solutions of the MHD equations \eqref{mhd} with mean-free initial data $(v^{(1)}(0, x), b^{(1)}(0,x))$ and $(v^{(2)}(0, x), b^{(2)}(0, x))$, respectively. Fixed $ {T}\le\tfrac{\widetilde{T}}{4}$ and $1\le p<2$,
then there exists a weak solution $(v,b)$ of the Cauchy problem to \eqref{mhd} with initial data $(v,b)|_{t=0}=(v^{(1)}(0, x), b^{(1)}(0, x))$ satisfying
\[(v, b)\in L^p([0, \widetilde{T}]; L^\infty(\TTT^3))\cap
L^1([0, \widetilde{T}]; C^{1^-}(\TTT^3))\]
and
\begin{align*}
(v, b)\equiv(v^{(1)}, b^{(1)})\,\,\text{on}\,\,[0, 2 {T}],\quad \text{and}\quad(v, b)\equiv(v^{(2)}, b^{(2)})\,\,\text{on}\,\,[3 {T}, \widetilde{T}].
\end{align*}
\end{thm}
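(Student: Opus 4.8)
The plan is to run an intermittent convex integration scheme tailored to the MHD system, built on top of a glued ``subsolution'' that already encodes the prescribed behaviour on $[0,2T]$ and on $[3T,\widetilde T]$. First I would pass to the relaxed \emph{MHD--Reynolds system}: look for $(v_q,b_q,\RR_q,\MM_q)$, with $\RR_q$ symmetric trace-free and $\MM_q$ antisymmetric, solving
\begin{align}
&\del_t v_q-\Delta v_q+\div(v_q\otimes v_q-b_q\otimes b_q)+\nabla p_q=\div\RR_q,\notag\\
&\del_t b_q-\Delta b_q+\div(v_q\otimes b_q-b_q\otimes v_q)=\div\MM_q,\qquad \div v_q=\div b_q=0.\notag
\end{align}
For the base step $q=0$, choose a smooth temporal cutoff $\chi$ with $\chi\equiv1$ on $[0,2T]$, $\chi\equiv0$ on $[3T,\widetilde T]$, set $(v_0,b_0):=\chi(v^{(1)},b^{(1)})+(1-\chi)(v^{(2)},b^{(2)})$, and let $(\RR_0,\MM_0)$ be obtained by applying an anti-divergence operator (a symmetric one for $\RR_0$, an antisymmetric Biot--Savart type one for $\MM_0$) to the residual; since $v^{(i)},b^{(i)}$ solve \eqref{mhd}, the residual, hence $(\RR_0,\MM_0)$, is smooth and supported in $[2T,3T]$. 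This is the key feature that will propagate: the ``good'' intervals carry zero stress.

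The heart of the argument is the inductive proposition: given $(v_q,b_q,\RR_q,\MM_q)$ with estimates governed by $\lambda_q=a^{b^q}$ and $\delta_q=\lambda_q^{-2\beta}$ ($\beta$ small, $a$ and $b$ large), construct $(v_{q+1},b_{q+1},\RR_{q+1},\MM_{q+1})$ with $\|(\RR_{q+1},\MM_{q+1})\|_{L^1_{t,x}}\lesssim\delta_{q+2}$, $\|(v_{q+1}-v_q,b_{q+1}-b_q)\|_{L^p_tL^\infty_x}\lesssim\delta_{q+1}^{1/2}$, summable $L^1_tW^{1,\infty}_x$-type bounds, and --- crucially --- $\supp_t$ of all the new objects contained in $\supp_t(\RR_q,\MM_q)\subset[2T,3T]$. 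The perturbation is $v_{q+1}=v_q+w_{q+1}$, $b_{q+1}=b_q+d_{q+1}$ with $w_{q+1}=\wpq+\wcq+\wtq$ and analogously $d_{q+1}=\dpq+\dcq+\dtq$, where the principal parts are superpositions of \emph{intermittent shear flows} oscillating at frequency $\lambda_{q+1}$ along a single direction, modulated by amplitude functions obtained from a first-order expansion of $\RR_q$ (resp.\ $\MM_q$) via a geometric decomposition lemma, and multiplied by \emph{temporal concentration functions} $g_\kappa(t)=\kappa^{1/2}g(\kappa t)$ normalized so that $\|g_\kappa\|_{L^2_t}\sim1$ and $\|g_\kappa\|_{L^r_t}\sim\kappa^{1/2-1/r}$. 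Using \emph{shear} building blocks, rather than Mikado-type flows, is the decisive structural choice: each shear flow self-interacts producing oscillations in only one direction, so choosing a common family of directions for the velocity and magnetic perturbations keeps the antisymmetric coupling $\div(v\otimes b-b\otimes v)$ from generating the multi-directional oscillation errors it would otherwise produce, while the low-frequency outputs of $\wpq\ootimes\wpq-\dpq\ootimes\dpq$ and of the mixed term cancel $\RR_q$ and $\MM_q$ respectively.

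Next I would estimate the new stresses, obtained by applying the symmetric (resp.\ antisymmetric) anti-divergence operator to the sum of: the linear/dissipative error $\del_t w_{q+1}-\Delta w_{q+1}$ and its magnetic analogue, the transport and Nash errors $\div(v_q\otimes w_{q+1}+w_{q+1}\otimes v_q-\cdots)$, the oscillation error $\div(\wpq\ootimes\wpq-\dpq\ootimes\dpq+\RR_q)$ together with the genuinely MHD mixed-interaction error, and the corrector errors. The oscillation errors are absorbed by the one-directional structure and the frequency gap; the delicate term is the Laplacian applied to the perturbation, which is handled by designing $\wtq,\dtq$ to cancel the low-frequency time oscillation produced by $g_\kappa^2$ (using $\del_t$ of a primitive of $g_\kappa^2-1$) and by balancing $\kappa$ against $\lambda_{q+1}$ under a CFL-type constraint. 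The restriction $1\le p<2$ enters exactly here: the perturbation costs $\|g_\kappa\|_{L^p_t}\sim\kappa^{1/2-1/p}$ in $L^p_t$, which is \emph{small} only for $p<2$, and this is precisely what lets the dissipation be defeated without any regularity hypothesis on $(v,b)$; at $p=2$ one would be forced to assume an interval of regularity as in \cite{1Cheskidov}. Choosing $\beta$ small and $b$ large closes all the inequalities.

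Finally, $\sum_q\delta_{q+1}^{1/2}<\infty$ gives convergence $(v_q,b_q)\to(v,b)$ in $L^p_tL^\infty_x$; the $L^1_tW^{1,\infty}_x$-type bounds, together with the super-exponential decay of the $C^0$ norm interpolated against the $\lambda_{q+1}$-growth of higher derivatives, yield $(v,b)\in L^1_tC^{1-\epsilon}_x$ for every $\epsilon>0$; and $(\RR_q,\MM_q)\to0$ in $L^1_{t,x}$, so $(v,b)$ is a weak solution of \eqref{mhd}. The support property forces $(v,b)\equiv(v^{(1)},b^{(1)})$ on $[0,2T]$ and $(v,b)\equiv(v^{(2)},b^{(2)})$ on $[3T,\widetilde T]$, and in particular the initial data equals $(v^{(1)}(0,x),b^{(1)}(0,x))$. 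The main obstacle --- and where essentially all the work lives --- is Steps two and three: engineering the intermittent shear ansatz so that it simultaneously cancels \emph{both} $\RR_q$ and $\MM_q$, suppresses the extra oscillation errors created by the antisymmetric nonlinearity, and stays compatible with the temporal concentration needed to beat the dissipation, since these three demands pull the free parameters in competing directions.
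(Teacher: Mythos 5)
Your outline reproduces the paper's strategy almost exactly: the relaxed MHD--Reynolds system with symmetric trace-free $\RR_q$ and antisymmetric $\MM_q$, the base step that glues $(v^{(1)},b^{(1)})$ and $(v^{(2)},b^{(2)})$ with a temporal cutoff so that the stress is supported in $(2T,3T)$, the perturbation $w_{q+1}=\wpq+\wcq+\wtq$, $d_{q+1}=\dpq+\dcq+\dtq$ built from intermittent shear flows modulated by temporal concentration functions and amplitudes from the two geometric lemmas, the same error decomposition, and the correct identification of where $1\le p<2$ enters (the $L^p_t$ cost of the temporal concentration). The structural remark about shear flows versus Mikado flows and the antisymmetric nonlinearity is also the paper's stated motivation.

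There is, however, one concrete missing ingredient: your inductive proposition carries no time-regularity information on $(\RR_q,\MM_q)$, and without it the linear error does not close. The term $\mathcal{R}_a\del_t(\dpq+\dcq)$ (and its velocity analogue) requires bounds on $\del_t a_{(b,k)}$ and $\del_t a_{(v,k)}$, i.e.\ on $\del_t\MM_q$ and $\del_t\RR_q$, which are not controlled by the $L^\infty_tH^3_x$ and $L^1_{t,x}$ bounds you propagate. The paper resolves this with an entire intermediate stage (its Section~3): before perturbing, it replaces $(v_q,b_q)$ by a glued solution $(\vv_q,\bb_q)$ assembled from exact local-in-time MHD flows $\vex_i,\bex_i$ on overlapping intervals of length $\tau_q=T\lambda_q^{-15}$, using parabolic smoothing to get $\|\del_t^M D^N(\RRR_q,\MMM_q)\|_{L^\infty_tH^3_x}\le\lambda_q^{55+15N}$, and a stability estimate in $B^{-1}_{1,1}$ to show the glued stress retains the $L^1$ smallness $\delta_{q+1}\lambda_q^{-4\alpha}$ of the old one. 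This gluing also handles the bookkeeping by which the intervals where $(v_q,b_q)\equiv(v^{(i)},b^{(i)})$ shrink monotonically to $[0,2T]$ and $[3T,\widetilde T]$. You would need either this gluing step or a time-mollification of the stress together with an additional inductive bound (e.g.\ on $\|\del_t(\RR_q,\MM_q)\|_{L^\infty_tH^3_x}$) to make the amplitude and linear-error estimates legitimate; as written, the induction cannot be closed.
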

For the ideal MHD system, there has no dissipative effect preventing the nonlinear term from balancing the stress errors. Therefore, we can apply the methods of proving Theorem \ref{t:main0} and Theorem \ref{t:main} to show the following result:
\begin{cor}For the ideal MHD system, there exist infinitely many weak solutions $(v, b)\in L^1_t C^{1-}_x$ with the same smooth initial data.
\end{cor}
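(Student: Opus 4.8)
The plan is to re-run the convex integration scheme used for Theorem~\ref{t:main}, but for the system obtained from \eqref{mhd} by deleting the dissipative terms $-\Delta v$ and $-\Delta b$. The decisive structural simplification is that the ideal MHD system carries \emph{no dissipative error}: at each stage of the iteration the perturbation must only cancel the transport (Nash) error and the oscillation error, both of which the intermittent shear flows of Theorem~\ref{t:main} are already designed to absorb. Concretely, one constructs smooth quadruples $(v_q,b_q,\mathring R_q,\mathring M_q)$ solving the ideal MHD-Reynolds system
\[
\del_t v_q+\div(v_q\otimes v_q-b_q\otimes b_q)+\nabla p_q=\div\mathring R_q,\qquad
\del_t b_q+\div(v_q\otimes b_q-b_q\otimes v_q)=\div\mathring M_q,
\]
together with $\div v_q=\div b_q=0$, satisfying the inductive bounds $\|(\mathring R_q,\mathring M_q)\|_{L^1_{t,x}}\le\delta_{q+1}$ with $\delta_q\to 0$, and — exactly as in Theorem~\ref{t:main} — agreeing near $t=0$ with a fixed smooth solution $(v^{(1)},b^{(1)})$ of the ideal system. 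Here $(v^{(1)},b^{(1)})$ is taken to be the local-in-time smooth solution of the ideal MHD system issuing from the prescribed smooth, mean-free, divergence-free initial data, so the limiting weak solution automatically carries that data.

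The new ingredient is the regularity bookkeeping needed to land in $L^1_tC^{1-\e}_x$. Besides the estimates of Theorem~\ref{t:main}, one carries the additional inductive bound $\|(w_{q+1},d_{q+1})\|_{L^1_tC^{1-\e}_x}\le M\,2^{-q}$, where $(w_{q+1},d_{q+1})=(v_{q+1}-v_q,\,b_{q+1}-b_q)$. The perturbation is a superposition of spatially intermittent shear profiles oscillating at a frequency $\lambda_{q+1}$ and modulated by strongly concentrated temporal functions; its spatial $C^{1-\e}$ norm is thus bounded by $\lambda_{q+1}^{1-\e}$ times the amplitude and the spatial concentration parameters, while in $L^1_t$ the temporal concentration costs essentially nothing. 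Since the ideal system — unlike Navier-Stokes — imposes no dissipative, CFL-type constraint tying $\lambda_{q+1}$ to the time scale, there is enough room in the choice of $\lambda_{q+1}$, of the intermittency and temporal-concentration parameters, and of the decreasing sequence $\delta_q$ to make the right-hand side of this new estimate summable while still reducing the transport and oscillation errors to size $\delta_{q+2}$. Strong temporal concentration remains essential: it is exactly what lets an almost-Lipschitz-in-space perturbation destroy the conservation of energy and magnetic helicity, consistently with the solutions being only $L^1$ — and not $L^3$ — in time.

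Passing to the limit, $(v,b)=\lim_q(v_q,b_q)$ is a weak solution of the ideal MHD system lying in $L^1_tC^{1-\e}_x$ for every $\e>0$, with the prescribed smooth initial data. To produce \emph{infinitely many} such solutions one argues as in Theorem~\ref{t:main}: fix the smooth datum $(v_0,b_0)$ and its local smooth solution $(v^{(1)},b^{(1)})$, pick a sequence of pairwise distinct smooth solutions $(v^{(2,j)},b^{(2,j)})$ of the ideal system (e.g.\ with distinct energies on a later time interval), and glue so that the resulting weak solution coincides with $(v^{(2,j)},b^{(2,j)})$ on $[3T,\widetilde T]$; the solutions obtained for distinct $j$ are then distinct, yet all share the initial data $(v_0,b_0)$. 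Alternatively, a free phase parameter in the intermittent shear building blocks already yields a continuum of solutions.

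The main obstacle is the one flagged in the introduction: cancelling the oscillation error produced by the anti-symmetric nonlinearity $\div(v\otimes b-b\otimes v)$. Mikado flows do not suffice here, since the extra oscillation directions demanded by the magnetic interaction generate new oscillation errors; one must instead work with intermittent shear flows, and the delicate point is to tune the oscillation frequency, the intermittency dimension and the temporal concentration so that these flows simultaneously (i) reduce the transport and oscillation errors of the ideal system to size $\delta_{q+2}$, and (ii) keep $\|w_{q+1}\|_{L^1_tC^{1-\e}_x}$ — and also $\|w_{q+1}\|_{L^p_tL^\infty_x}$, should one moreover want the conclusion of Theorem~\ref{t:main0} — summable. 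Deleting the Laplacian only eases the error estimates (the transport error is simpler and there is no dissipative error), so the real work is this parameter balance, together with the routine verification that gluing with the smooth end solutions is compatible with the inviscid equations, which is immediate once $(v^{(1)},b^{(1)})$ and the $(v^{(2,j)},b^{(2,j)})$ are taken to solve the ideal, rather than the viscous and resistive, MHD system.
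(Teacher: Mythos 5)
Your proposal is correct and follows essentially the same route as the paper, which justifies the corollary precisely by observing that the absence of the dissipative terms only removes error contributions, so the convex integration and gluing scheme of Theorems \ref{t:main0} and \ref{t:main} (whose inductive estimates already track the $L^1_tC^{1-\e}_x$ norm) applies verbatim to the ideal system, with infinitely many solutions obtained by gluing to distinct smooth end states.
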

In the end of this section, we arise {some interesting questions} on the related  MHD systems:
\begin{itemize}
  \item [(1)]For non-resistive MHD equations, one could expect that the magnetic helicity is conserved for smooth solutions. Then  is  integrability condition $(v,b)\in L^3_{t,x}$ sharp for the magnetic helicity conservation?

  \item [(2)]{For the MHD system with magnetic resistivity but without dissipation}, does there exist solutions $(v, b)$ in $C^{\alpha}_{t,x}(\alpha<\frac{1}{3})$ with increasing energy? is weak solution $(v, b)$  unique  for $v\in L^1_x C^{1-}_x$?
\end{itemize}

\section{Outline of the proof}

As previously mentioned,  Theorem \ref{t:main0} can be immediately proved by Theorem \ref{t:main}. Therefore, we are concentrated on showing Theorem \ref{t:main} in this paper. The process of proving Theorem \ref{t:main} is based on a convex integration scheme inspired by \cite{2018Wild, 2Beekie, 1Cheskidov, 13Nonuniqueness}. Now, we give the sketch of the convex integration scheme.

For every integer $q\ge 1$, we investigate the  following relaxation  of \eqref{mhd} :
\begin{equation}
\left\{ \begin{alignedat}{-1}
&\del_t v_q-\Delta v_q+\div (v_q\otimes v_q)  +\nabla p_q   =\div (b_q\otimes b_q) + \div \RR_q,
 \\
 &\del_t b_q-\Delta b_q+\div (v_q\otimes b_q)     = \div (b_q\otimes v_q)+ \div \MM_q,
 \\
  &\nabla \cdot v_q = 0,~\nabla \cdot b_q = 0,
  \\ &(v_q,b_q) |_{t=0}=(\vin, \bin):=(v^{(1)}(0, x), b^{(1)}(0, x)),
\end{alignedat}\right.  \label{e:subsol-euler}
\end{equation}
where the \emph{Reynolds stress} $\RR_q$  and the \emph{magnetic  stress} $\MM_q$ are trace-free symmetric matrix and  skew-symmetric matrix, respectively. Here and below,  $v \otimes b\coloneq  (v_j b_i)_{i,j=1}^3$, and the divergence of a $3\times 3$ matrix $M=(M_{ij})_{i,j=1}^3$ is defined by $\div M$ with components $(\div M)_i \coloneq   \partial_j M_{ji}$.

Without loss of generality, we consider the non-uniqueness of weak solutions on time interval $[0,1]$. Assume that $(v^{(1)}, b^{(1)}), (v^{(2)}, b^{(2)})\in C^0([0, 1]; H^3(\TTT^3))$ are two strong solutions of the MHD equations \eqref{mhd} with mean-free initial data $(\vin, \bin)$ and $(v^{(2)}(0, x), b^{(2)}(0, x))$ respectively. Fixed $T>0$ such that
\[T\le\tfrac{1}{4},\]
our target is to construct a solution $(v_q, b_q, p_q,\RR_q, \MM_q)$ to \eqref{e:subsol-euler} such that
$\{(v_q, b_q)\}_{q\ge 1}$ is a Cauchy sequence in $L^2_{t,x}\cap L^p_tL^\infty_x\cap L^1_t C^{1-\e}_x$, the Reynolds and magnetic stresses $(\RR_q, \MM_q)$ tend to zero in $L^1$-norm as $q\to\infty$, which imply that  $(v_q, b_q)\to(v,b)$  solves \eqref{mhd} in weak sense. Moreover, $(v_q, b_q)$ satisfies
 $$(v_q, b_q)\equiv (v^{(1)}, b^{(1)})~{\rm on} ~[0, 2T+2\tau_q],\quad (v_q, b_q)\equiv (v^{(2)}, b^{(2)})~{\rm on} ~[3T-\tau_q, 1], $$
where $\tau_q\to 0$ as $q\to\infty$. Therefore, we can obtain that
 $$(v, b)\equiv (v^{(1)}, b^{(1)})~{\rm on} ~[0, 2T],\quad (v, b)\equiv (v^{(2)}, b^{(2)})~{\rm on} ~[3T, 1]. $$
To quantify the estimates on the solutions $(v_q, b_q, p_q,\RR_q, \MM_q)$, we introduce parameters and the inductive procedure firstly.

\subsection{Parameters and inductive procedure}
To begin with, we introduce all parameters of this paper. Fixed $0<\e<1$, $1\le p<2$, let $C_0=2^{12}$ and $\gamma, \sigma, b, \alpha, \beta$ be positive constants such that
\begin{equation}\label{gamma}
\gamma=\min\{\tfrac{4}{3p}-\tfrac{2}{3}, \tfrac{2\e}{3}, \tfrac{1}{24}\},\quad \sigma=\tfrac{\gamma}{16},\quad \quad  b=\tfrac{4C_0}{\sigma}, \quad \beta=\tfrac{1}{b^3}, \qquad \alpha=\tfrac{\beta}{16}.
\end{equation}
Moreover, we define
\begin{align}\label{10}
    \lambda_q \coloneq  \big\lceil a^{(b^q)}\big\rceil ,\quad\tau_q:=T\lambda^{-15}_q,  \quad \delta_q \coloneq \lambda_2^{3\beta}\lambda_q^{-2\beta}, \quad q\ge 1.
\end{align}
It is easy to verify that $\delta_1$ and  $\delta_2$  are  large numbers which could bound general initial data by setting $a$ sufficiently large, and for $q\geq3$, $\delta_q$ for is small and tend zero as $q\rightarrow\infty$.

To employ induction, we suppose that the solution $(v_q, b_q, p_q, \RR_q, \MM_q)$ of \eqref{e:subsol-euler} satisfies the following conditions: For all $q\ge 1$,
\begin{align}
     &\|(v_q,~b_q)\|_{L^2_{t,x}}+\|(v_q,~b_q)\|_{L^p_tL^\infty_x}+\|(v_q,~b_q)\|_{L^1_t C^{1-\e}_x}\le\sum_{i=1}^q\delta^{1/2}_i,\label{e:vq-LpLinftyC1-}\\
      &\|(v_q, b_q)\|_{L^\infty_tH^3_x} \le  \lambda^6_q ,
    \label{e:vqbq-H3}\\
    &\|(\RR_q, \MM_q)\|_{L^\infty_tH^3_x} \le  \lambda^{6}_q ,\,\,\|(\RR_q,~\MM_q)\|_{L^1_{t,x}} \le \delta_{q+1}\lambda^{-6\alpha}_q,
    \label{e:RR_q-L1}
\\
&(v_q, b_q)\equiv (v^{(1)}, b^{(1)}) \,\,\,\text {on}\,\,\,[0, 2T+2\tau_q] \,\,\,\text{and} \,\,\,(v_q, b_q)\equiv (v^{(2)}, b^{(2)}) \,\,\, \text {on}\,\,\, [3T-\tau_{q}, 1].\label{e:initial}
\end{align}
 The following proposition shows that there exists a solution  of \eqref{e:subsol-euler}
satisfying the above inductive conditions \eqref{e:vq-LpLinftyC1-}--\eqref{e:initial} with $q$ replaced by $q+1$, which guarantees the iteration proceeds successfully.

\begin{prop}
\label{p:main-prop}Fix $1\le p<2$, $0<\e<1$. Assume that
 $(v_q,b_q,p_q,\RR_q, \MM_q)$ solves
\eqref{e:subsol-euler} with initial data $(\vin,\bin)\in H^3(\mathbb T^3)$ and satisfies \eqref{e:vq-LpLinftyC1-}--\eqref{e:initial}.
Then there exists a solution $(v_{q+1},b_{q+1}, p_{q+1},\RR_{q+1}, \MM_{q+1})$ of \eqref{e:subsol-euler} satisfying
\eqref{e:vq-LpLinftyC1-}--\eqref{e:initial}
with $q$ replaced by $q+1$, and such that
\begin{align}
        \|(v_{q+1} - v_q,~b_{q+1} - b_q)\|_{L^2_{t,x}\cap( L^p_t L^\infty_x) \cap (L^1_t C^{1-\e}_x)} &\le\delta_{q+1}^{1/2}.
        \label{e:velocity-diff}
\end{align}
\end{prop}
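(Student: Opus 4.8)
The plan is to carry out one step of the intermittent convex integration scheme, following the architecture of \cite{1Cheskidov} and \cite{2Beekie} but using shear intermittent building blocks combined with temporal concentration to simultaneously kill the oscillation error and absorb the (viscous/resistive) linear error. First I would fix the concentration parameters: a spatial oscillation frequency $\lambda_{q+1}$, a spatial intermittency parameter, and a temporal concentration parameter $\mu_{q+1}$ (tied to $\tau_{q+1}$ via the definitions in \eqref{10}), all chosen as powers of $\lambda_{q+1}$ consistent with $\gamma,\sigma,b,\beta,\alpha$ in \eqref{gamma}. Then I would regularize $(v_q,b_q,\RR_q,\MM_q)$ by mollification at a small length scale $\ell$, producing $(v_\ell,b_\ell,\RR_\ell,\MM_\ell)$ with the usual trade-off: closeness to $(v_q,b_q)$ in $L^2_{t,x}$ and in $C^{1-\e}_x$ up to errors controlled by \eqref{e:vqbq-H3}--\eqref{e:RR_q-L1}, and good higher derivative bounds in terms of $\ell^{-1}$. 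I would also introduce a smooth temporal cutoff $\chi(t)$ that vanishes near $[0,2T+2\tau_q]$ and near $[3T-\tau_q,1]$ so that the perturbation is supported away from the time slabs where $(v_q,b_q)$ already coincides with $(v^{(i)},b^{(i)})$; this preserves \eqref{e:initial} for $q+1$ once $\tau_{q+1}<\tau_q$.

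Next I would build the principal perturbation. For the magnetic stress $\MM_\ell$ (skew-symmetric) and the Reynolds stress $\RR_\ell$ (trace-free symmetric, after subtracting a suitable scalar so that $\RR_\ell + \rho\,\mathrm{Id}\ge 0$ and is invertible), I would invoke geometric decomposition lemmas expressing these as convex combinations over a finite family of directions $\{\xi\}$ of rank-one and antisymmetric building tensors. The perturbation then reads, schematically,
\[
w^{(p)}_{q+1} = \sum_{\xi} a_\xi(t,x)\, g_\xi(\mu_{q+1} t)\, \phi_\xi(\lambda_{q+1}\cdot)\, \xi,
\]
where $\phi_\xi$ are the intermittent shear profiles concentrated on a set of small measure, $g_\xi$ is an $L^p$-normalized temporal concentration function with $\|g_\xi\|_{L^1_t}\sim 1$ but $\|g_\xi\|_{L^2_t}\sim\mu_{q+1}^{1/2}$ large, and the amplitudes $a_\xi$ involve $\RR_\ell^{1/2}$, $\MM_\ell^{1/2}$, $\chi$, and $\delta_{q+1}^{1/2}$. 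I would then add a divergence-free corrector $w^{(c)}_{q+1}$ (and its magnetic analogue) so that $v_{q+1}=v_\ell+w^{(p)}_{q+1}+w^{(c)}_{q+1}$, $b_{q+1}=b_\ell+d^{(p)}_{q+1}+d^{(c)}_{q+1}$ are exactly divergence-free, and possibly a temporal corrector so that the low-frequency average of the self-interaction cancels $\RR_\ell,\MM_\ell$ up to an admissible remainder. The $L^p_t L^\infty_x$ and $L^1_t C^{1-\e}_x$ estimates \eqref{e:vq-LpLinftyC1-}, \eqref{e:velocity-diff} follow from the scaling of $\phi_\xi$, $g_\xi$ in these norms (this is exactly where $1\le p<2$ enters, via $\|g_\xi\|_{L^p_t}\lesssim\mu_{q+1}^{1/2-1/p}$ and the intermittency exponent being admissible precisely under the constraint defining $\gamma$), and the $L^\infty_t H^3_x$ bound \eqref{e:vqbq-H3} for $q+1$ follows from putting all derivatives on the highest frequency $\lambda_{q+1}$ and using $\lambda_{q+1}\gg\lambda_q,\ell^{-1}$.

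The last and most delicate step is defining $(\RR_{q+1},\MM_{q+1})$ by solving the new relaxed system \eqref{e:subsol-euler} with $(v_{q+1},b_{q+1})$ and estimating it in $L^1_{t,x}$ (and in $L^\infty_t H^3_x$, the latter being crude). The new stress splits into the familiar pieces: a \emph{linear/viscous} error $\mathcal R(\del_t w - \Delta w + \ldots)$, a \emph{transport} error, an \emph{oscillation} error from the high-high-to-low interactions of the building blocks, a \emph{Nash} error from interaction with $v_\ell,b_\ell$, and mollification errors. The oscillation error is where the MHD anti-symmetric structure $\div(v\otimes b - b\otimes v)$ bites: cross interactions between the velocity and magnetic shear flows, and the multiple oscillation directions needed to invert a general skew-symmetric $\MM_\ell$, generate extra terms that the single-direction Mikado flows of \cite{1Cheskidov} cannot absorb; the shear geometry is chosen precisely so that, after summing over $\xi$ and using that the interacting profiles have disjoint frequency supports unless they are conjugate, these terms are either exactly cancelled by the choice of $a_\xi$ or carry a gain of $\lambda_{q+1}^{-1}$ from the antidivergence operator $\mathcal R$. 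The linear error is the other main obstacle: $\Delta w^{(p)}_{q+1}$ has size $\lambda_{q+1}^2$ times the amplitude, far too large, and this is where the temporal concentration $g_\xi$ pays off — writing $\del_t\big(\text{primitive of }g_\xi\big)$ and integrating by parts trades a factor $\mu_{q+1}^{-1}$ against the profile, and balancing $\mu_{q+1}$ against $\lambda_{q+1}$ and the intermittency dimension makes the viscous error $o(\delta_{q+2}\lambda_{q+1}^{-6\alpha})$ in $L^1_{t,x}$. I expect the bookkeeping that all five error types simultaneously fit under $\delta_{q+2}\lambda_{q+1}^{-6\alpha}$ with the single parameter choice \eqref{gamma}--\eqref{10} to be the real content; once the parameter inequalities are verified, \eqref{e:RR_q-L1} for $q+1$, the Cauchy estimate \eqref{e:velocity-diff}, and hence \eqref{e:vq-LpLinftyC1-} for $q+1$ all follow, and \eqref{e:initial} is immediate from the support of $\chi$.
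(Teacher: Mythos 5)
Your convex-integration half is essentially the paper's: the same shear intermittent profiles $\phi_{(\gamma,\frac12,k)}$ combined with a temporal concentration function, two geometric lemmas (skew-symmetric near $0$ for the magnetic stress, symmetric near $\mathrm{Id}$ for the Reynolds stress), incompressibility and temporal correctors, and the same accounting of why $1\le p<2$ and the parameter choices \eqref{gamma} make all errors fit under $\delta_{q+2}\lambda_{q+1}^{-6\alpha}$. One mechanism is slightly misattributed: the dissipative error $\mathcal R\Delta w^{(p)}_{q+1}$ is not handled by integrating by parts in time, but simply by the smallness of $\|g_{(2,\sigma)}\|_{L^1_t}\sim\lambda_{q+1}^{-1}$ relative to its $L^2_t$ normalization (one power of $\lambda_{q+1}$ from $\mathcal R\Delta$ against one power from $\|g\|_{L^1_t}$, plus the spatial concentration of $\phi$); the time integration by parts, encoded in $h_\sigma$ and the temporal correctors $w^{(t)}_{q+1},d^{(t)}_{q+1}$, is reserved for the $\PP(g^2)$ residue of the oscillation error.

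The genuine gap is in your regularization step. The paper does not mollify: it \emph{glues} exact MHD flows $(\vex_i,\bex_i)$ launched from $v_q(\cdot,t_i)$ on overlapping intervals $[t_i,t_{i+2}]$ (Section \ref{Gluing procedure}), which does three things at once: (i) by parabolic smoothing it supplies the $\partial_t^M D^N$ bounds on $(\RRR_q,\MMM_q)$ needed for the amplitude estimates, which your inductive hypotheses \eqref{e:vqbq-H3}--\eqref{e:RR_q-L1} alone cannot give (there is no time regularity of $\RR_q,\MM_q$ to mollify against without paying a commutator you must then track); (ii) it localizes the new stress to the short intervals $I_i$ strictly inside $\{\eta_q\equiv1\}$, so the geometric cancellation $\MMM_q-\eta_q^2\MMM_q=0$ is exact; and, decisively, (iii) it preserves \eqref{e:initial}: since $\vex_0\coloneq v_q=v^{(1)}$ on $[0,2T+2\tau_q]$ and the partition of unity reproduces it there exactly, $(\vv_q,\bb_q)\equiv(v^{(1)},b^{(1)})$ on that set. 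In your scheme $v_{q+1}=v_\ell+w_{q+1}$ with $v_\ell$ a space-time mollification, so on $[0,2T+2\tau_{q+1}]$ you get $v_{q+1}=v_\ell\neq v^{(1)}$ no matter where the temporal cutoff puts the perturbation; the cutoff controls $\supp w_{q+1}$ but not the discrepancy $v_\ell-v_q$. As written, \eqref{e:initial} at level $q+1$ fails, and repairing it (e.g.\ by setting $v_{q+1}=v_q+w_{q+1}$ and pushing $v_q-v_\ell$ into the stress, or by mollifying only away from the matching intervals) requires exactly the kind of additional argument the gluing step is designed to replace.
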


Next, we show that Proposition \ref{p:main-prop} immediately implies Theorem \ref{t:main}.
\subsection{Proof of Theorem \ref{t:main} }
To start the iterative procedure, we need to construct $(v_1, b_1, p_1, \RR_1, \MM_1)$ firstly. Define a smooth cut-off function $\chi(t)$ such that $\text{supp}\chi=[-1, 3T-\tau_1]$ and $\chi|_{[0,2T+2\tau_1]}=1$, we construct $(v_1, b_1)$ by gluing $(v^{(1)}, b^{(1)})$ and $(v^{(2)}, b^{(2)})$ as follows
\begin{align*}
(v_1, b_1)=(\chi v^{(1)}+(1-\chi)v^{(2)}, \chi b^{(1)}+(1-\chi)b^{(2)}).
\end{align*}
One can easily verify that $(v_1, b_1)$ is divergence-free and  satisfies
\begin{equation}\label{v1b1}
\begin{aligned}
&\|(v_1,~b_1)\|_{L^2_{t,x}}+\|(v_1,~b_1)\|_{L^p_tL^\infty_x}+\|(v_1,~b_1)\|_{L^1_t C^{1-\e}_x}\le\delta^{1/2}_2\lambda^{-4\alpha}_1,\\
&\|(v_1,~b_1)\|_{L^\infty_t H^3_x}\le\lambda_1,
\end{aligned}
\end{equation}
for sufficient large $a$, which implies that $(v_1, b_1)$ satisfies \eqref{e:vq-LpLinftyC1-}--\eqref{e:vqbq-H3} for $q=1$. Noting the support of $\chi(t)$, we have, for $t\in [0, 2T+2\tau_1]\cup [3T-\tau_1, 1]$, $\RR_1=\MM_1=0$, and for $t\in (2T+2\tau_1, 3T-\tau_1)$,
\begin{align}
    \RR_1 \coloneq&
        \del_t \chi \mathcal R(v^{(1)}-v^{(2)} ) - \chi(1-\chi)(v^{(1)}-v^{(2)}  )\ootimes (v^{(1)}-v^{(2)}  )\notag\\
        &+\chi(1-\chi)(b^{(1)}-b^{(2)} )\ootimes(b^{(1)}-b^{(2)} ), \label{RR_1}\\
        \MM_1 \coloneq&
        \del_t \chi \mathcal R_a(b^{(1)}-b^{(2)}  ) - \chi(1-\chi)(v^{(1)}-v^{(2)}   )\otimes (b^{(1)}-b^{(2)} )\notag\\
        &+\chi(1-\chi)(b^{(1)}-b^{(2)} )\otimes (v^{(1)}-v^{(2)} ), \label{MM_1}\\
    p_1 \coloneq & \chi p^{(1)}+(1-\chi)p^{(2)} - \chi(1-\chi)\big( |v^{(1)}-v^{(2)}  |^2  - \int_{\mathbb T^3} |v^{(1)}-v^{(2)} |^2 \dd x\notag\\
    &-|b^{(1)}-b^{(2)} |^2  + \int_{\mathbb T^3} |b^{(1)}-b^{(2)} |^2 \dd x\big)\nonumber,
\end{align}
here and below, $v\ootimes u:=v\otimes u-(v\cdot u)\rm{Id}_{3\times3}$. $\mathcal R$ and $\mathcal{R}_{a}$ are  matrix-valued right inverse of the divergence operators introduced in Section~\ref{inversedivergence}.

Using estimates \eqref{e:vq-LpLinftyC1-} and \eqref{e:vqbq-H3} for $q=1$, we can obtain by the definition of $(\RR_1, \MM_1)$ in \eqref{RR_1}--\eqref{MM_1} that
\begin{align*}
\|(\RR_1, \MM_1)\|_{L^\infty_t H^3_x}\lesssim &\|(v^{(1)}, v^{(2)}, b^{(1)}, b^{(2)})\|_{L^\infty_t H^2_x}+\|(v^{(1)}, v^{(2)}, b^{(1)}, b^{(2)})\|_{L^\infty_t H^3_x}^2
<\lambda^{4}_1,\\
\|(\RR_1, \MM_1)\|_{L^1_{t,x}}\lesssim &\|(v^{(1)}, v^{(2)}, b^{(1)}, b^{(2)})\|_{L^2_{t,x}}+\|(v^{(1)}, v^{(2)}, b^{(1)}, b^{(2)})\|^2_{L^2_{t,x}}\\
\lesssim&\delta_2\lambda^{-8\alpha}_1\le\delta_2\lambda^{-6\alpha}_1,
\end{align*}
and
\[(v_1, b_1)\equiv (v^{(1)}, b^{(1)}) \,\,\,\text {on}\,\,\,[0, 2T+2\tau_1] \,\,\,\text{and} \,\,\,(v_1, b_1)\equiv (v^{(2)}, b^{(2)}) \,\,\, \text {on}\,\,\, [3T-\tau_1, 1].\]
Utilizing Proposition \ref{p:main-prop} inductively, we obtain a sequence of solutions $\{(v_q,b_q, p_q,\RR_q,\MM_q)\}$ of system \eqref{e:subsol-euler} with the inductive estimates \eqref{e:vq-LpLinftyC1-}--\eqref{e:initial}. By the definition of $\delta_q$, one can easily deduce that $\sum_{i=1}^{\infty}\delta_i$ converges to a finite number. This fact combined with \eqref{e:velocity-diff}
implies that $\{(v_q,b_q, p_q,\RR_q,\MM_q)\}$ is a Cauchy sequence in $L^2([0,1]; L^2(\TTT^3))\cap L^p([0,1]; L^\infty(\TTT^3))\cap L^1([0,1]; C^{1-\e}(\TTT^3))$. Thus, there exists a limit function $(v,b)$ satisfying
$$(v,b)\in L^2([0,1]; L^2(\TTT^3))\cap L^p([0,1]; L^\infty(\TTT^3))\cap L^1([0,1]; C^{1-\e}(\TTT^3))$$
and solves \eqref{mhd} in weak sense because $\|(\RR_q, \MM_q)\|_{L^1_{t,x}}\rightarrow 0$ as $q\to\infty$. Note the fact that $\tau_q\to 0$ when $q\to\infty$, one can deduce by \eqref{e:initial} that
\begin{align*}
 (v, b)\equiv (v^{(1)}, b^{(1)}) \,\,\,\text {on}\,\,\,[0, 2T] \,\,\,\text{and} \,\,\,(v, b)\equiv (v^{(2)}, b^{(2)}) \,\,\, \text {on}\,\,\, [3T, 1].
 \end{align*}

 Next, we show that Theorem \ref{t:main} implies  Theorem \ref{t:main0}.
\subsection{Proof of Theorem \ref{t:main0}}

For a given weak solution $(v, b)\in L^p([0, T]; L^\infty(\TTT^3))$ with initial data $(v_0, b_0)$,
there exists $t_0\in (0, T)$ such that $(v, b)|_{t=t_0}\in L^\infty(\TTT^3)$. By the well-posedness results on MHD system, we have a unique local strong solution $(v^{1}, b^{1})$ on $[t_0,t_{\rm local}]\subset[0, T]$ with initial data $(v(t_0), b(t_0))$ and fixed $0<4\varepsilon<t_{\rm local}-t_0$,
$$(v^{1}(t_0+\varepsilon), b^{1}(t_0+\varepsilon))\in H^3(\TTT^3).$$
Taking advantage of Theorem \ref{t:main} on $[t_0+\varepsilon, t_{\rm local}]$ instead of $[0, \widetilde{T}]$, choosing $(v^{(1)}, b^{(1)})=(v^{1}, b^{1})$ and $(v^{(2)}, b^{(2)})$ to be the shear flows  in $[t_0+\varepsilon,t_{\rm local}]$ such that
 \begin{align*}
v^{(2)}=(0,0,Ae^{-4\pi^2 t}\sin(2\pi x_1)),~~
  b^{(2)}=(0,Ae^{-4\pi^2 t}\sin(2\pi x_1),0),
  \end{align*}
for any positive constant $A$, one can obtain a weak solution $(\widetilde{v}, \widetilde{b})\in L^p([0, T]; L^\infty(\TTT^3)\cap L^2([0, T]\times \TTT^3)$ with initial data $(v_0, b_0)$ such that
$$(\widetilde{v}, \widetilde{b})\equiv(v, b)~{\rm on}~[0, t_0], ~(\widetilde{v}, \widetilde{b})\equiv (v^1, b^1)~{\rm on}~[t_0, t_0+2\varepsilon],~(\widetilde{v}, \widetilde{b})\equiv (v^{(2)}, b^{(2)})~{\rm on}~[t_0+3\varepsilon, T].$$
The above relationship shows that for large enough $A$,
 \[\|(\widetilde{v}(t_{\rm local}), \widetilde{b}(t_{\rm local}))\|^2_{L^2_x}=\|({v^{(2)}}(t_{\rm local}),{b^{(2)}}(t_{\rm local}))\|^2_{L^2_x}>2\|(v_0, b_0)\|^2_{L^2_x},\]
 This shows that $(\widetilde{v}, \widetilde{b})$ is a non-Leray-Hopf weak solution and $(\widetilde{v}, \widetilde{b})\neq (v, b)$ by adjusting $A$. Therefore, we finish the proof of Theorem \ref{t:main0}.

{Next, we are devoted to  proving Proposition \ref{p:main-prop} in the rest of this paper.} The proof of Proposition \ref{p:main-prop} consists of the following two steps: Gluing: $(v_q, b_q)\mapsto (\bar{v}_q, \bar{b}_q)$ in Section \ref{Gluing procedure} and convex integration scheme: $(\bar{v}_q, \bar{b}_q) \mapsto (v_{q+1}, b_{q+1})$ in Section \ref{perturbation}.

\section{Gluing procedure}\label{Gluing procedure}
\subsection{Classical exact  flows}\label{stability}
\label{ss:exact}
Before giving exact solutions $(\vex_i, \bex_i)$, we introduce the local well-posedness result on the MHD equations for smooth initial data. For the detailed proof, readers can refer to \cite{2018Wild}, in which the well-posedness of the Navier-Stokes equations have been established.
\begin{prop}\label{well-posed}Assume that the mean-free initial data $(\vin, \bin)\in H^3(\TTT^3)$, then there exists a universal positive constant $c\le 1$ such that system \eqref{mhd} possesses a unique strong solution $(v, b)$ on $[0, T_0]$ satisfying the following estimates:
\begin{align}
&\sup_{t\in [0, T_0]}(\|v(t)\|^2_{L^2_x}+\|b(t)\|^2_{L^2_x})+2\int_0^{T_0}(\|v\|^2_{\dot H^1_x}+\|b\|^2_{\dot H^1_x})\dd x\le \|\vin\|^2_{L^2_x}+\|\bin\|^2_{L^2_x},\\
&\sup_{t\in [0, T_0]}(\|v(t)\|^2_{H^3_x}+\|b(t)\|^2_{H^3_x})\le 2(\|\vin\|_{H^3_x}+\|\bin\|_{H^3_x}),
\end{align}
and for any $N\ge 0$, $M=0, 1$,
\begin{align}
\sup_{t\in (0, T_0]}|t|^{\frac{N}{2}+M}\|\partial^M_t D^N v(t)\|_{H^3_x}+\sup_{t\in (0, T_0]}|t|^{\frac{N}{2}+M}\|\partial^M_t D^N b(t)\|_{H^3_x}\lesssim \|\vin\|_{H^3_x}+ \|\bin\|_{H^3_x},
\end{align}
where the implicit constant depends on $N$ and $M$, and
\[T_0=\frac{c}{(\|\vin\|_{H^3_x}+\|\bin\|_{H^3_x})(1+\|\vin\|_{L^2_x}+\|\bin\|_{L^2_x})}.\]
\end{prop}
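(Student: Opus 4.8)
The statement is a standard local well-posedness result, and the plan is to transcribe the Navier--Stokes argument of \cite{2018Wild}, treating $(v,b)$ as one coupled parabolic system and checking that the magnetic nonlinearities respect the cancellations used there. First I would pass to the Duhamel formulation
\begin{align*}
v(t)&=e^{t\Delta}\vin-\int_0^t e^{(t-s)\Delta}\,\PH\,\div\big(v\otimes v-b\otimes b\big)(s)\dd s,\\
b(t)&=e^{t\Delta}\bin-\int_0^t e^{(t-s)\Delta}\,\PH\,\div\big(v\otimes b-b\otimes v\big)(s)\dd s,
\end{align*}
and run a Picard iteration in $C([0,T_0];H^3(\TTT^3))$. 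Since $H^3(\TTT^3)$ is a Banach algebra and $\|e^{t\Delta}\PH\,\div f\|_{H^3_x}\lesssim t^{-1/2}\|f\|_{H^3_x}$, the bilinear operator is a contraction on a small ball once $T_0$ is small relative to the data; the sharper form of the lifespan claimed, which also involves $\|\vin\|_{L^2_x}+\|\bin\|_{L^2_x}$, is obtained by refining this estimate with the energy identity below (interpolating the $L^\infty$ norm between $L^2$ and $H^3$). Uniqueness in the class $C([0,T_0];H^3)$ follows from a Gronwall estimate for the difference of two solutions.

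Next I would prove the two a priori bounds. The $L^2$ identity is the only place where the two equations genuinely couple: pairing the $v$-equation with $v$, the $b$-equation with $b$, and adding, the mixed terms combine into $\int_{\TTT^3}b_j\partial_j(b_iv_i)\dd x=-\int_{\TTT^3}(\div b)\,b_iv_i\dd x=0$ since $\div b=0$, so, exactly as for Navier--Stokes, one obtains $\|(v,b)(t)\|_{L^2_x}^2+2\int_0^t\|(\nabla v,\nabla b)\|_{L^2_x}^2\dd s=\|(\vin,\bin)\|_{L^2_x}^2$, giving the first bound. For the $H^3$ bound one applies $D^N$ with $|N|\le 3$ to both equations, pairs with $D^Nv$ and $D^Nb$, integrates by parts, and invokes the Moser/Kato--Ponce commutator estimates together with $H^3\hookrightarrow W^{1,\infty}$; every transport and stretching contribution is then controlled by $\|(v,b)\|_{W^{1,\infty}_x}\|(v,b)\|_{H^3_x}^2\lesssim\|(v,b)\|_{H^3_x}^3$, the top-order piece being absorbed by the dissipation, so that $\tfrac{d}{dt}\|(v,b)\|_{H^3_x}^2\lesssim\|(v,b)\|_{H^3_x}^3$ and the stated bound on $[0,T_0]$ follows by a continuity argument.

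Finally, I would derive the weighted derivative estimates by a double induction on $N$ and $M$, using the Duhamel formula and the smoothing bounds $\|D^Ne^{t\Delta}f\|_{H^3_x}\lesssim t^{-N/2}\|f\|_{H^3_x}$. A time derivative is eliminated through the equation, $\partial_tv=\Delta v-\PH\,\div(v\otimes v-b\otimes b)$, lowering $M$ at the cost of two spatial derivatives; for the spatial part one distributes $D^N$ across the quadratic integrand, uses the algebra property of $H^3$ and the already-established lower-order bounds, and checks that the factor $s^{-N/2-M}$ produced in the time integral is integrable against the $(t-s)^{-1/2}$ of the heat kernel after splitting the integral at $t/2$. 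The main work — and the only genuinely delicate point, since the rest is a direct copy of the Navier--Stokes theory — is this bookkeeping: arranging that in the Duhamel integral the $N+2M$ derivatives distribute so that each resulting term is either strictly lower order in the induction or carries a temporally integrable singularity.
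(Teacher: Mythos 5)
Your proposal is correct and is essentially the paper's own approach: the paper offers no proof of this proposition, deferring entirely to the Navier--Stokes argument of \cite{2018Wild}, and your sketch is exactly that argument (Picard iteration in $C_tH^3_x$, the $L^2$ energy identity using the cancellation of the mixed magnetic terms, and the weighted parabolic-smoothing bounds by induction on $N$ and $M$) transplanted to the coupled system. The only detail worth pinning down is the one you merely gesture at: the linear, rather than quadratic, dependence on the data in the weighted estimates is precisely where the factor $(1+\|\vin\|_{L^2_x}+\|\bin\|_{L^2_x})$ in $T_0$ enters, through the interpolation $\|(v,b)\|_{L^\infty_x}^2\lesssim\|(v,b)\|_{L^2_x}\|(v,b)\|_{H^3_x}$ applied to the non-top-order factor in each Duhamel term.
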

In view of Proposition \ref{well-posed}, there exists a unique strong solution $(\vex_i, \bex_i)(i\ge 1)$ of  the following system on $[t_i, t_{i+2}]$:
\begin{equation}\label{eulervi}
\left\{ \begin{alignedat}{-1}
&\del_t \vex_i -\Delta \vex_i+\div(\vex_i\otimes  \vex_i)  +\nabla \pex_i   =  \div (\bex_{i}\otimes \bex_{i}) ,
\\
&\del_t \bex_{i}-\Delta \bex_i +\div (\vex_{i}\otimes \bex_{i})   = \div (\bex_{i}\otimes \vex_{i})  ,
\\
&  \nabla \cdot \vex_{i} = 0,~~\nabla \cdot \bex_{i} = 0,
  \\
& \vex_i|_{t=t_i}= v_q(\cdot,t_i),~~\bex_i|_{t=t_i} = b_q(\cdot,t_i).
\end{alignedat}\right.
\end{equation}
Furthermore, we can obtain the following estimates of $(\vex_i, \bex_i)$ on $[t_i, t_{i+2}]$:
\begin{cor}\label{vibiHs}Let $(\vex_i, \bex_i)$ be the solution to \eqref{eulervi} on $[t_i, t_{i+2}]$, then it holds that
\begin{align}
&\|\vex_i(t),\bex_i(t)\|_{H^{3}_x}\lesssim \|v_q(t_i), b_q(t_i)\|_{H^{3}_x}\lesssim  \lambda^{6}_{q}, \quad \forall t_i\le t\le t_{i+2},\label{vibiH3},
\end{align}
and for $\forall t\in [t_i+\frac{\tau_q}{3}, t_{i+2}]$, $N\ge 0$ and $M=0, 1$,
\begin{align}\label{vibitimespace}
&\|\partial^M_t D^N \vex_i(t)\|_{H^3_x}+\|\partial^M_t D^N \bex_i(t)\|_{H^3_x}\lesssim \tau^{-\frac{N}{2}-M}_q\|v_q(t_i), b_q(t_i)\|_{H^{3}_x}\lesssim \lambda^{\tfrac{15N}{2}+7}_q,
\end{align}
where implicit constant depends on $N$.
\end{cor}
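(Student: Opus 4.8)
The plan is to obtain Corollary~\ref{vibiHs} directly from Proposition~\ref{well-posed}, applied to each exact flow $(\vex_i,\bex_i)$ after shifting the initial time to $t_i$, and then to substitute the inductive bound \eqref{e:vqbq-H3} together with the explicit value $\tau_q=T\lambda_q^{-15}$ from \eqref{10}. First I would check that $(\vex_i,\bex_i)$ is actually defined on the whole of $[t_i,t_{i+2}]$. Since $v_q(\cdot,t_i)$ and $b_q(\cdot,t_i)$ are mean-free, divergence-free elements of $H^3(\TTT^3)$, Proposition~\ref{well-posed} (with the time variable translated by $t_i$) yields a unique strong solution $(\vex_i,\bex_i)$ of \eqref{eulervi} on $[t_i,t_i+T_0^{(i)}]$, where
\[
T_0^{(i)}=\frac{c}{\big(\|v_q(t_i),b_q(t_i)\|_{H^3_x}\big)\big(1+\|v_q(t_i),b_q(t_i)\|_{L^2_x}\big)}\gtrsim \lambda_q^{-12},
\]
using \eqref{e:vqbq-H3} and $\|v_q(t_i),b_q(t_i)\|_{L^2_x}\le\|v_q(t_i),b_q(t_i)\|_{H^3_x}\le\lambda_q^6$. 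As the gluing times are spaced by $\tau_q$, one has $t_{i+2}-t_i=2\tau_q=2T\lambda_q^{-15}\ll\lambda_q^{-12}$ once $a$ (hence $\lambda_q=\lceil a^{b^q}\rceil$) is chosen large; therefore the strong solution survives on all of $[t_i,t_{i+2}]$.

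The bound \eqref{vibiH3} is then immediate from the second estimate of Proposition~\ref{well-posed}, which controls $\sup_{t\in[t_i,t_{i+2}]}\big(\|\vex_i(t)\|_{H^3_x}^2+\|\bex_i(t)\|_{H^3_x}^2\big)$ by the $H^3$-norm of the data, hence by $\lambda_q^6$ via \eqref{e:vqbq-H3}. For \eqref{vibitimespace} I would invoke the smoothing estimate of Proposition~\ref{well-posed}, again translated to start at $t_i$: for $N\ge0$, $M\in\{0,1\}$ and $t\in(t_i,t_{i+2}]$,
\[
|t-t_i|^{\frac N2+M}\big(\|\partial_t^M D^N\vex_i(t)\|_{H^3_x}+\|\partial_t^M D^N\bex_i(t)\|_{H^3_x}\big)\lesssim_N \|v_q(t_i),b_q(t_i)\|_{H^3_x}.
\]
Restricting to $t\in[t_i+\tfrac{\tau_q}{3},t_{i+2}]$ gives $|t-t_i|\ge\tfrac{\tau_q}{3}$, so the prefactor may be inverted at cost $\lesssim_N\tau_q^{-N/2-M}$; together with \eqref{e:vqbq-H3} this yields $\lesssim_N\tau_q^{-N/2-M}\lambda_q^6$, and substituting $\tau_q=T\lambda_q^{-15}$ (absorbing the fixed $T$-dependent constant for $a$ large) gives the power of $\lambda_q$ recorded in \eqref{vibitimespace}.

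I do not expect any real obstacle: the corollary is essentially a time-translated restatement of Proposition~\ref{well-posed}, followed by insertion of the inductive estimates \eqref{e:vqbq-H3} and of the value of $\tau_q$. The only point needing a genuine check is that the local existence time exceeds $2\tau_q$, which is precisely where the parameter hierarchy of \eqref{gamma}--\eqref{10} is used ($\tau_q\sim\lambda_q^{-15}$ versus $T_0^{(i)}\gtrsim\lambda_q^{-12}$, with $\lambda_q\to\infty$ as $a\to\infty$); the remainder is bookkeeping of polynomial powers of $\lambda_q$.
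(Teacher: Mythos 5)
Your proposal is correct and is essentially the paper's own (unwritten) argument: the corollary is stated without proof as a direct consequence of Proposition~\ref{well-posed} applied with data $(v_q(t_i),b_q(t_i))$, using \eqref{e:vqbq-H3} to get $T_0^{(i)}\gtrsim\lambda_q^{-12}\gg 2\tau_q$ and the smoothing estimate with $|t-t_i|\ge\tau_q/3$. One small caveat, which is a defect of the statement rather than of your argument: for $M=1$ the middle expression $\tau_q^{-N/2-M}\lambda_q^{6}$ equals $\lambda_q^{15N/2+21}$ up to constants, so the final bound $\lambda_q^{15N/2+7}$ in \eqref{vibitimespace} only follows for $M=0$; your derivation of the first inequality is the correct content.
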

Next, we show the stability between the approximate solution $(v_q, b_q)$ and the exact solution $(\vex_i, \bex_i)$ on $[t_i, t_{i+2}]$.
\begin{prop}\label{vi-vq}
Let $i\ge 1$ and $t\in [t_{i}, t_{i+2}]$. We have
\begin{align}
&\|(\vex_i - v_q,\bex_i - b_q)\|^2_{L^2}\lesssim\lambda^{-3}_q,\label{gai-vi-vqL2}\\
&\|(\vex_i - v_q,\bex_i - b_q)\|_{L^{\infty}([t_i, t_{i+2}],B^{0}_{\infty,1})\cap L^{1}([t_i, t_{i+2}],B^2_{\infty,1})}\lesssim\lambda^{-9}_q,\label{vi-vqL2}\\
&\|(\vex_i-v_q)(t)\|_{B^{-1}_{1,1}}+\|(\bex_i -b_q)(t)\|_{B^{-1}_{1,1}} \lesssim\int_{t_{i}}^{ t_{i+2}}\|\RR_q,\MM_q\|_{ B^{0}_{1,1}}\dd s.\label{e:deriv-vector-potential}
 \end{align}

 \end{prop}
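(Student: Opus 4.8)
## Proof proposal for Proposition \ref{vi-vq}

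The plan is to view the differences $(\vex_i - v_q, \bex_i - b_q)$ as solutions of a linear parabolic system forced by the stresses $(\RR_q, \MM_q)$, then close estimates by energy methods (for \eqref{gai-vi-vqL2}) and maximal-regularity / Littlewood–Paley arguments (for \eqref{vi-vqL2} and \eqref{e:deriv-vector-potential}). Write $w \coloneq \vex_i - v_q$, $d \coloneq \bex_i - b_q$. Subtracting \eqref{e:subsol-euler} from \eqref{eulervi} and using that both pairs share the same initial data at $t = t_i$, one gets, schematically,
\begin{align*}
&\del_t w - \Delta w + \div\big( \vex_i \otimes w + w \otimes v_q - \bex_i \otimes d - d \otimes b_q\big) + \nabla(\pex_i - p_q) = -\div \RR_q, \\
&\del_t d - \Delta d + \div\big( \vex_i \otimes d + w \otimes b_q - \bex_i \otimes w - d \otimes v_q\big) = -\div \MM_q,
\end{align*}
with $(w,d)|_{t=t_i} = 0$, all fields divergence-free. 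Here the key structural point is that the transport/stretching coefficients are controlled by $\|(\vex_i, \bex_i, v_q, b_q)\|_{L^\infty_t H^3_x} \lesssim \lambda_q^6$ by \eqref{vibiH3} and \eqref{e:vqbq-H3}, while the time interval has length $2\tau_q = 2T\lambda_q^{-15}$, so the Grönwall factor $\exp(C\lambda_q^6 \cdot \tau_q)$ stays bounded (in fact $\to 1$), which is what makes all the powers of $\lambda_q$ on the right-hand sides come out as stated rather than being eaten by exponential growth.

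For \eqref{gai-vi-vqL2}: take the $L^2$ inner product of the $w$-equation with $w$ and the $d$-equation with $d$, add. The symmetric diffusion term is nonnegative, the pressure drops by divergence-freeness, the cross terms $\div(\vex_i \otimes w)\cdot w$ and the like are bounded using $\|\nabla \vex_i\|_{L^\infty} \lesssim \|\vex_i\|_{H^3} \lesssim \lambda_q^6$, and the antisymmetric MHD cross terms partially cancel; the net contribution is $\lesssim \lambda_q^6 \|(w,d)\|_{L^2}^2$. The forcing term $\int -\div\RR_q \cdot w = \int \RR_q : \nabla w$ is estimated by $\|\RR_q\|_{L^2}\|\nabla w\|_{L^2}$, absorbing $\|\nabla w\|_{L^2}^2$ into the diffusion and leaving $\|\RR_q\|_{L^2}^2$; but since we only have $\|\RR_q\|_{L^1_{t,x}}$ and $\|\RR_q\|_{H^3}$-type control, interpolate — or better, use $\|\RR_q\|_{L^2}^2 \le \|\RR_q\|_{L^\infty_x}\|\RR_q\|_{L^1_x} \lesssim \lambda_q^6 \cdot \delta_{q+1}\lambda_q^{-6\alpha}$, integrate in time over the window of length $\tau_q \sim \lambda_q^{-15}$, and check that the resulting bound is $\lesssim \lambda_q^{-3}$. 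Grönwall then gives \eqref{gai-vi-vqL2}. (One must also be slightly careful that $(\RR_q, \MM_q)$ may only be supported on a subinterval, but since $(w,d)$ vanishes whenever the stress vanishes, this only helps.)

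For the Besov estimates \eqref{vi-vqL2} and \eqref{e:deriv-vector-potential}: apply Littlewood–Paley projections $\Delta_j$ to the system, use the smoothing of the heat semigroup $e^{t\Delta}$ on each frequency block, and treat the transport and forcing terms via the Duhamel formula. The standard parabolic maximal-regularity estimate in Besov spaces gives $\|(w,d)\|_{L^\infty_t B^0_{\infty,1}} + \|(w,d)\|_{L^1_t B^2_{\infty,1}} \lesssim \|\text{forcing}\|_{L^1_t B^0_{\infty,1}} + (\text{transport terms})$; the transport terms are handled by product estimates in Besov spaces, again bounded by $\lambda_q^6$ times lower-order norms of $(w,d)$ and absorbed on a short interval. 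The forcing $\div \RR_q$ in $L^1_t B^0_{\infty,1}$ costs one derivative, i.e. is controlled by $\|\RR_q\|_{L^1_t B^1_{\infty,1}} \lesssim \|\RR_q\|_{L^1_t H^3_x}$ (Sobolev embedding $H^3 \hookrightarrow B^1_{\infty,1}$ in 3D), which by \eqref{e:RR_q-L1} and $|I| \sim \tau_q$ is $\lesssim \tau_q \lambda_q^6 \lesssim \lambda_q^{-9}$; this yields \eqref{vi-vqL2}. For \eqref{e:deriv-vector-potential} one instead does \emph{not} spend the derivative: write the forcing as $\div \RR_q$ and estimate $(w,d)$ directly in $B^{-1}_{1,1}$ — equivalently apply $\mathcal R$/$\mathcal R_a$ to pass to the vector-potential level — so that $\|\div \RR_q\|_{B^{-1}_{1,1}} \lesssim \|\RR_q\|_{B^0_{1,1}}$ with no loss, integrate Duhamel in time, and bound the transport contributions (which carry a derivative but are multiplied by smooth $H^3$ coefficients and the already-controlled $\|(w,d)\|_{L^2}$) against $\int_{t_i}^{t_{i+2}} \|(\RR_q,\MM_q)\|_{B^0_{1,1}}\,\dd s$ using the short-interval smallness. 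I expect the main obstacle to be bookkeeping: making sure every transport/stretching term is genuinely absorbable (using divergence-freeness to integrate by parts and the antisymmetric MHD cancellations to avoid spurious growth) and that the short length $\tau_q = T\lambda_q^{-15}$ of the gluing window is exploited sharply enough that the final exponents ($-3$, $-9$, and the integral bound) come out exactly as claimed rather than with a worse power of $\lambda_q$.
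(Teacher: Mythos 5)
Your overall strategy coincides with the paper's: an energy/Gr\"onwall argument for \eqref{gai-vi-vqL2} and parabolic maximal regularity in Besov spaces (at the levels $B^0_{\infty,1}$--$B^2_{\infty,1}$ and $B^{-1}_{1,1}$ respectively) for \eqref{vi-vqL2} and \eqref{e:deriv-vector-potential}, with the short gluing window $\tau_q=T\lambda_q^{-15}$ absorbing the $H^3$-sized transport coefficients; your treatment of the Besov estimates, including $\int\|(\RR_q,\MM_q)\|_{B^1_{\infty,1}}\lesssim\tau_q\lambda_q^6=\lambda_q^{-9}$ and the derivative-free bound $\|\div\RR_q\|_{B^{-1}_{1,1}}\lesssim\|\RR_q\|_{B^0_{1,1}}$, matches the paper.

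The one step that does not work as written is your bound on the forcing in the energy estimate. You invoke $\|\RR_q\|_{L^2_x}^2\le\|\RR_q\|_{L^\infty_x}\|\RR_q\|_{L^1_x}\lesssim\lambda_q^6\cdot\delta_{q+1}\lambda_q^{-6\alpha}$, but the inductive hypothesis \eqref{e:RR_q-L1} controls only the \emph{space-time} norm $\|\RR_q\|_{L^1_{t,x}}$, not $\sup_t\|\RR_q(t)\|_{L^1_x}$, so the pointwise-in-time version is unavailable; and the integrated version $\int\|\RR_q\|^2_{L^2_x}\dd t\le\|\RR_q\|_{L^\infty_{t,x}}\|\RR_q\|_{L^1_{t,x}}\lesssim\lambda_q^{6}\delta_{q+1}\lambda_q^{-6\alpha}$ carries no factor $\tau_q$ and, since $\delta_{q+1}\lambda_q^{-6\alpha}$ is only $\lambda_q^{-O(\beta)}$ with $\beta=b^{-3}$ minuscule, is nowhere near $\lambda_q^{-3}$. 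The interpolation is unnecessary: the paper simply uses the crude bound $\|\RR_q(t)\|_{L^2_x}^2\le\|\RR_q\|^2_{L^\infty_tH^3_x}\le\lambda_q^{12}$ and integrates over the window of length $2\tau_q$ to get $\int_{t_i}^{t_{i+2}}\|(\RR_q,\MM_q)\|^2_{L^2_x}\dd t\lesssim\tau_q\lambda_q^{12}\lesssim\lambda_q^{-3}$, which together with the Gr\"onwall factor $e^{C\tau_q\lambda_q^{12}}\le C$ closes \eqref{gai-vi-vqL2}. With that replacement your argument is complete and follows the paper's proof.
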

 \begin{proof}To begin with, we recall that the equations for the difference are
\begin{equation}\label{e:eqn-for-vex_i-v_ell}
\left\{ \begin{alignedat}{-1}
&\del_t (\vex_i - v_q)-\Delta (\vex_i - v_q)+v_q\cdot\nabla (\vex_i - v_{q}) + (\vex_i-v_q)\cdot\nabla \vex_i + \nabla (\pex_i - p_{q}) \\
&\qquad=b_{q} \cdot\nabla (\bex_i - b_{q}) + (\bex_i-b_q)\cdot\nabla \bex_i-\div \RR_{q},\\
&\del_t (\bex_i - b_{q})-\Delta (\bex_i - b_{q}) +v_{q} \cdot\nabla (\bex_i - b_{q}) + (\vex_i-v_q)\cdot\nabla \bex_i \\
   &\qquad=b_{q} \cdot\nabla (\vex_i - v_{q}) + (\bex_i-b_q)\cdot\nabla \vex_i - \div \MM_{q},\\
   &(\vex_i-v_q)(\cdot,t_{i})=0,~~
   (\bex_i-b_q)(\cdot,t_{i})=0.
\end{alignedat}\right.
\end{equation}
Employing energy method and $\div \vex_i=\div \bex_i=\div v_q=\div b_q=0$, we deduce by  the Gr\"{o}nwall inequality, estimates \eqref{e:vqbq-H3}  and  \eqref{vibiH3} that, for $t\in [t_{i}, t_{i+2}]$,
\begin{align*}
\|(\vex_i - v_q)(t)\|^2_{L^2_x}+\|(\bex_i - b_{q})(t)\|^2_{L^2_x}\le &Ce^{C\int_{t_{i}}^t(\|\nabla \vex_i\|^2_{L^\infty_x}+\|\nabla \bex_i\|^2_{L^\infty_x})\dd s}(\int_{t_{i}}^t\|\RR_q\|^2_{L^2_x}+\|\MM_q\|^2_{L^2_x}\dd s)\\
\le& Ce^{C\tau_q\lambda^{12}_q}\lambda^{12}_{q}\tau_q\le Ce^{C\lambda^{-3}_q}\lambda^{-3}_q\le C\lambda^{-3}_q.
\end{align*}
Furthermore, using the estimates in Besov spaces for the heat equation in \cite{WDY}, we have
\begin{align*}
&\|(\vex_i - v_q,\bex_i - b_q)\|_{L^{\infty}([t_i, t_{i+2}],B^{0}_{\infty,1})\cap L^{1}([t_i, t_{i+2}],B^2_{\infty,1})} \\
\lesssim& \int_{t_{i+2}}^{t_i}\|(\vex_i,v_q, \bex_i,b_{q})\|_{H^3_x}\|(\vex_i - v_q, \bex_i - b_{q})\|_{B^{1}_{\infty,1}}\dd t+\int_{t_{i+2}}^{t_i}\|(\RR_q, \MM_q)\|_{B^1_{\infty,1}}\dd t\\
\lesssim& \tau^{\frac{1}{2}}_q\|(\vex_i,v_q, \bex_i,b_{q})\|_{L^\infty([t_i, t_{i+2}],H^3_x)}\|(\vex_i - v_q, \bex_i - b_{q})\|_{L^2([t_i, t_{i+2}], B^{1}_{\infty,1})}\dd t+\int_{t_{i+2}}^{t_i}\|(\RR_q, \MM_q)\|_{B^{1}_{\infty,1}}\dd t.
\end{align*}
Due to $\tau^{\frac{1}{2}}_q\|(\vex_i,v_q, \bex_i,b_{q})\|_{L^\infty([t_i, t_{i+2}],H^3_x)}\lesssim \lambda^{-\frac{3}{2}}_q\ll1$, we obtain from the above inequality that
\begin{align}\label{B0infty1}
\|(\vex_i - v_q,\bex_i - b_q)\|_{L^{\infty}([t_i, t_{i+2}],B^{0}_{\infty,1})\cap L^{1}([t_i, t_{i+2}],B^2_{\infty,1})}
\lesssim \int_{t_i}^{t_{i+2}}\|(\RR_q, \MM_q)\|_{ B^{1}_{\infty,1}}\dd s\lesssim\lambda^{-9}_q.
\end{align}
In a similar way as deriving the above inequality, we obtain \eqref{e:deriv-vector-potential},  which is used to estimate the Reynolds and magnetic stresses in gluing stage.
\end{proof}

Taking advantage of the exact solutions $\{(\vex_i, \bex_i)\}_{i\ge 0}$, we construct a new approximate solution $(\vv_q, \bb_q)$ by gluing these exact solutions. Noting the stability estimates between  $(\vex_i, \bex_i)$ and $(v_q, b_q)$, one would expect that the glued solution $(\vv_q, \bb_q)$ satisfies estimates \eqref{e:vq-LpLinftyC1-}-\eqref{e:vqbq-H3} up to a universal constant.
\subsection{Gluing exact  flows}\label{gluing}
We define 
$$t_0=2T, t_i=t_0+i\tau_q,  N_q:=\lambda^{15}_q-1,$$ 
and the time intervals $I_i,J_i$ ($1\le i\le N_q+1)$ by
\begin{align}
    I_i \coloneq \Big[ t_i + \frac{\tau_q}3,\ t_i+\frac{2\tau _q}3\Big] ,\quad
     J_i \coloneq \Big(t_i - \frac{\tau_q}3,\ t_i+\frac{\tau _q}3\Big).
\end{align}
Because $\tau_q=T\lambda^{-15}_q$, it is easy to verify that $3T-\tau_q=t_{N_q}$. Let  $\{ \chi_i\}_{i=1}^{N_{q}+1}$  be a partition of unity such that
\begin{align}
       &\text{supp} \chi_1=[-1, 2T+\tfrac{5\tau_q}{3}], \qquad\chi_i |_{\big[0, 2T+\tfrac{4\tau_q}{3}\big]} =1, \qquad\|  \del_t^N \chi_1\|_{C^0_t} \lesssim \tau_q^{-N},\ \    \label{e:chi_i-properties}\\
           & \text{supp} \chi_i=I_{i-1} \cup J_i \cup I_{i}, \quad\quad\,\,\,\, \chi_i |_{J_i} =1, \qquad\qquad\quad\,\,  \|  \del_t^N \chi_i\|_{C^0_t} \lesssim \tau_q^{-N},\ \   2 \le i\le N_q,\label{e:chi_i-properties}\\
       &\text{supp} \chi_{N_q+1}= [3T-\tfrac{2\tau_q}{3},2], \quad\,\, \chi_i |_{\big[3T-\tfrac{\tau_q}{3}, 1\big]} =1, \qquad\,\,\|  \del_t^N \chi_{N_q+1}\|_{C^0_t} \lesssim \tau_q^{-N}.  \label{gai-e:chi_i-properties}
\end{align}
In particular, for $|i-j|\ge2$, $\text{supp} \chi_i \cap \text{supp} \chi_{j} =\emptyset$. For $1\le i\le N_q-1$, $(\vex_{i},\bex_{i}, \pex_i)$ is the solution of system \eqref{eulervi} on $[t_i, t_{i+2}]$. And we define
$$(\vex_{0},\bex_{0}, \pex_{0}):=(v_q, b_q, p_q)~{\rm on}~[0, 2T+2\tau_q],\,\,(\vex_{N_q},\bex_{N_q}, \pex_{N_q}):=(v_q, b_q, p_q)~{\rm on}~[3T-\tau_q, 1].$$
The glued velocity, magnetic fields and pressure $(\vv_q, \bb_q, \ppp_q)$ are constructed as follows:
\begin{align}
    \vv_q(x,t) &\coloneq \sum_{i=0}^{N_q} \chi_{i+1}(t) \vex_{i}(x,t),  \,\forall \,t\in [0,1], \label{e:vv_q}\\
    \bb_q(x,t) &\coloneq \sum_{i=0}^{N_q} \chi_{i+1}(t) \bex_{i}(x,t),  \,\forall\, t\in [0,1], \label{e:bb_q}\\
    \ppp_q(x,t) &\coloneq \sum_{i=0}^{N_q}\chi_{i+1}(t) \pex_i(x,t),  \,\forall \,t\in [0,1]. \label{e:pp_q}
\end{align}
Since $\chi_i(t)$ only depends on time variable,  $\vv_q$ and $\bb_q$ are still divergence-free, and
\begin{equation}\nonumber
\begin{aligned}
 t\in I_i,~1\le i\le N_q&\implies
\left\{ \begin{alignedat}{-1}
\vv_q(x,t)&=\vex_{i-1}(x,t)\chi_{i}(t) + \vex_{i}(x,t)\chi_{i+1}(t),\\
\bb_q(x,t)&=\bex_{i-1}(x,t)\chi_{i}(t)+\bex_{i}(x,t)\chi_{i+1}(t),
\end{alignedat}\right.  \\
t\in J_i,~1\le i\le N_q&\implies \vv_q (x,t)=\vex_{i-1}(x,t),~\bb_q (x,t)=\bex_{i-1}(x,t),\\
t\in [0, 2T+\tfrac{4\tau_q}{3}]\quad&\implies \vv_q (x,t)=\vex_{0}(x,t),~\bb_q (x,t)=\bex_{0}(x,t),\\
t\in [3T-\tfrac{\tau_q}{3}, 1]\quad&\implies \vv_q (x,t)=\vex_{q}(x,t),~\bb_q (x,t)=\bex_{q}(x,t).
\end{aligned}
\end{equation}
Therefore, for  $ t\in \mathop{\cup}\limits_{i=1}^{N_q}J_i \cup [0, 2T+\frac{4\tau_q}{3}]\cup [3T-\tfrac{\tau_q}{3}, 1]$, $(\vv_q, \bb_q)$ is an exact MHD flow. For $t\in I_i(i\ge 1)$, $(\vv_q,\bb_q)$ solves the equations
\begin{equation}\label{e:eqn-for-vex_i-v_ell}
\left\{ \begin{alignedat}{-1}
&\del_t \vv_q+\div (\vv_q\otimes \vv_q)  +\nabla \pp_q   =  \div (\bb_q\otimes \bb_q)+ \div\RRR_q,\\
&\del_t \bb_q+\div (\vv_q\otimes \bb_q) =\div (\vv_q\otimes \vv_q)+ \div\MMM_q,\\
&\nabla\cdot\vv_q=\nabla\cdot\div\bb_q=0,
\end{alignedat}\right.
\end{equation}
where
\begin{align}
    \RRR_q \coloneq&
        \del_t \chi_i \mathcal R(\vex_{i-1}-\vex_{i} ) - \chi_i(1-\chi_i)(\vex_{i-1}-\vex_{i} )\ootimes (\vex_{i-1}-\vex_{i} )\notag\\
        &+\chi_i(1-\chi_i)((\bex_{i-1}-\bex_{i} )\ootimes (\bex_{i-1}-\bex_{i} )), \label{RRR_q}\\
        \MMM_q \coloneq&
        \del_t \chi_i\mathcal{R}_{a}(\bex_{i-1}-\bex_{i} ) - \chi_i(1-\chi_i)(\vex_{i-1}-\vex_{i})\otimes (\bex_{i-1}-\bex_{i} )\notag\\
        &+\chi_i(1-\chi_i)((\bex_{i-1}-\bex_{i})\otimes (\vex_{i-1}-\vex_{i} )), \label{MMM_q}\\
    \pp_q   \coloneq &\ppp_q - \chi_i(1-\chi_i)\big( |\vex_{i-1}-\vex_{i}|^2  - \int_{\mathbb T^3} |\vex_{i-1}-\vex_{i}|^2 \dd x\notag\\
    &-|\bex_{i-1}-\bex_{i}|^2  + \int_{\mathbb T^3} |\bex_{i-1}-\bex_{i}|^2 \dd x\big).
\end{align}
Because ${\chi}_{N_q+1}$ only depends on $t$ and $ B^0_{\infty, 1}\hookrightarrow L^\infty_{x}, B^{1-\e}_{\infty, 1}\hookrightarrow  C^{1-\e}$, one can easily obtain the following proposition  by Proposition \ref{vi-vq}:
\begin{prop}[Estimates for $(\vv_q, \bb_q)$]\label{estimate-vvq}
Let $(\vv_q, \bb_q)$ be defined by \eqref{e:vv_q}. There exist the following estimates:
\begin{align}
 &\|(\vv_{q},\bb_{q} )\|_{L^2_{t,x}} +\|(\vv_{q},\bb_{q} )\|_{L^p_tL^\infty_x}+\|(\vv_{q},\bb_{q}) \|_{L^1_t C^{1-\e}_x} \le  C\lambda^{-\frac{3}{2}}_q+\sum_{i=1}^q\delta^{1/2}_i,\label{e:vv_q-L2}\\
 &\|(\vv_{q},\bb_{q} )\|_{L^\infty_t H^{3}_x} \lesssim \lambda^6_q.\label{e:vv_q-Hs}
\end{align}
Moreover, we have
\begin{align}
(\vv_{q},\bb_{q})\equiv (v^{(1)}, b^{(1)})\,{\rm on}\,[0, 2T+\tfrac{4\tau_q}{3}]\times\TTT^3, \, \,\,(\vv_{q},\bb_{q})\equiv (v^{(2)}, b^{(2)})\,{\rm on}\,[3T-\tfrac{\tau_{q}}{3}, 1]\times\TTT^3\label{e:vv_q-initial}.
 \end{align}
\end{prop}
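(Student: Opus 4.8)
The plan is to deduce Proposition~\ref{estimate-vvq} directly from the stability estimates of Proposition~\ref{vi-vq} together with the inductive hypotheses \eqref{e:vq-LpLinftyC1-}--\eqref{e:initial}, exploiting the facts that the $\chi_{i+1}$ depend only on time, that at most two of them are nonzero at any instant, and that each piece $(\vex_i,\bex_i)$ is close to $(v_q,b_q)$. Write $\vv_q - v_q = \sum_{i=0}^{N_q}\chi_{i+1}(\vex_i - v_q)$ since $\sum_i\chi_{i+1}\equiv 1$ on $[0,1]$; the same identity holds for $\bb_q - b_q$. Because $\supp\chi_i\cap\supp\chi_j=\emptyset$ for $|i-j|\ge2$, each time slice sees at most two nonzero terms, so the triangle inequality turns all estimates for the differences into pointwise-in-$t$ versions of the $L^2$, $B^0_{\infty,1}$, and $B^1_{\infty,1}$ bounds from Proposition~\ref{vi-vq}. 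For the $L^\infty_tH^3_x$ bound I would instead bound $\vv_q$ directly: $\|\vv_q(t)\|_{H^3_x}\le 2\max_i\|\vex_i(t)\|_{H^3_x}\lesssim\lambda_q^6$ by \eqref{vibiH3}, and likewise for $\bb_q$, which gives \eqref{e:vv_q-Hs}.

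For the first estimate \eqref{e:vv_q-L2}, the strategy is: on the time set $\bigcup_i J_i\cup[0,2T+\tfrac{4\tau_q}{3}]\cup[3T-\tfrac{\tau_q}{3},1]$ one has $\vv_q=\vex_i=v_q$ or an exact flow agreeing with $(v^{(1)},b^{(1)})$ or $(v^{(2)},b^{(2)})$, so there $(\vv_q,\bb_q)$ inherits the bounds of $(v_q,b_q)$ via \eqref{e:vq-LpLinftyC1-}, contributing $\sum_{i=1}^q\delta_i^{1/2}$. On the remaining set $\bigcup_i I_i$, write $\vv_q = v_q + (\vv_q - v_q)$ and estimate the correction by $\|\vv_q-v_q\|\le\sum_i\chi_{i+1}\|\vex_i-v_q\|$: the $L^2_{t,x}$ norm picks up $\lambda_q^{-3/2}$ from \eqref{gai-vi-vqL2}, while the $L^p_tL^\infty_x$ and $L^1_tC^{1-\e}_x$ norms of the correction are controlled by $\|\vex_i-v_q\|_{L^\infty_t B^0_{\infty,1}}$ and $\|\vex_i-v_q\|_{L^1_t B^{1-\e}_{\infty,1}}$ respectively, each $\lesssim\lambda_q^{-9}$ by \eqref{vi-vqL2}, all of which are absorbed into the single term $C\lambda_q^{-3/2}$ after using the embeddings $B^0_{\infty,1}\hookrightarrow L^\infty_x$ and $B^{1-\e}_{\infty,1}\hookrightarrow C^{1-\e}$ and the smallness of $\tau_q=T\lambda_q^{-15}$ (the $I_i$'s have total measure $\lesssim\tau_q N_q\lesssim T$, so no blow-up of the time integrals occurs). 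Summing the two contributions and keeping the worst one gives the claimed bound $C\lambda_q^{-3/2}+\sum_{i=1}^q\delta_i^{1/2}$.

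For the last assertion \eqref{e:vv_q-initial}: on $[0,2T+\tfrac{4\tau_q}{3}]$ only $\chi_1$ is nonzero (by \eqref{e:chi_i-properties}, $\chi_i|_{[0,2T+\frac{4\tau_q}{3}]}=1$ for the relevant $i$ and the supports are disjoint), so $\vv_q=\vex_0=v_q$, and by the inductive condition \eqref{e:initial}, $v_q\equiv v^{(1)}$ on $[0,2T+2\tau_q]\supset[0,2T+\tfrac{4\tau_q}{3}]$; similarly for $\bb_q$. Symmetrically, on $[3T-\tfrac{\tau_q}{3},1]$ only $\chi_{N_q+1}$ is nonzero, $\vv_q=\vex_{N_q}=v_q$, and $v_q\equiv v^{(2)}$ on $[3T-\tau_q,1]\supset[3T-\tfrac{\tau_q}{3},1]$ by \eqref{e:initial}.

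The only mild subtlety — which I would flag as the ``main obstacle'', though it is routine — is bookkeeping the time integration on the transition intervals $I_i$: one must verify that summing the pointwise bounds over the $O(\lambda_q^{15})$ intervals $I_i$ does not destroy the negative power of $\lambda_q$. This works because each $I_i$ has length $\sim\tau_q=T\lambda_q^{-15}$, so the $L^1_t$ and $L^p_t$ norms over $\bigcup I_i$ only see a total time $\lesssim T$, and the per-interval spatial bounds $\lambda_q^{-9}$ (resp. $\lambda_q^{-3/2}$ for $L^2_{t,x}$) survive with room to spare. Everything else is a direct application of the triangle inequality, the partition-of-unity identity $\sum_i\chi_{i+1}\equiv1$, the two Besov embeddings, and Proposition~\ref{vi-vq}.
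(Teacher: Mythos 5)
Your proposal is correct and follows exactly the route the paper intends (the paper itself only asserts that the proposition ``easily'' follows from Proposition~\ref{vi-vq} and the embeddings $B^0_{\infty,1}\hookrightarrow L^\infty_x$, $B^{1-\e}_{\infty,1}\hookrightarrow C^{1-\e}$): decompose $\vv_q-v_q=\sum_i\chi_{i+1}(\vex_i-v_q)$, use the stability estimates, get the $H^3$ bound from Corollary~\ref{vibiHs}, and read off \eqref{e:vv_q-initial} from the cutoff supports and \eqref{e:initial}. The one place where your justification is too coarse is the $L^1_tC^{1-\e}_x$ summation: you cannot argue ``each $\|\vex_i-v_q\|_{L^1_tB^{1-\e}_{\infty,1}}\lesssim\lambda_q^{-9}$ and the total time is $\lesssim T$,'' because there is no uniform-in-time $B^{1-\e}_{\infty,1}$ bound of size $\lambda_q^{-9}$ (only $L^\infty_tB^0_{\infty,1}$ and $L^1_tB^2_{\infty,1}$ are controlled), and naively summing $N_q\sim\lambda_q^{15}$ contributions of size $\lambda_q^{-9}$ would give $\lambda_q^{6}$. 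The fix is standard: interpolate $\|f(t)\|_{B^{1-\e}_{\infty,1}}\lesssim\|f(t)\|_{B^0_{\infty,1}}^{(1+\e)/2}\|f(t)\|_{B^2_{\infty,1}}^{(1-\e)/2}$ and apply H\"older in time on $\supp\chi_{i+1}$ (length $\sim\tau_q$), which yields $\lesssim\tau_q^{(1+\e)/2}\lambda_q^{-9}$ per interval and hence a total $\lesssim N_q\tau_q^{(1+\e)/2}\lambda_q^{-9}\lesssim\lambda_q^{-3/2}$, consistent with the stated bound.
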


Next, we show that $(\RRR_q, \MMM_q)$ is a small perturbation of $(\RR_q, \MM_q)$ in $L^1_{t,x}$ space, and we give time-space estimates of $(\RRR_q, \MMM_q)$, which will be used to bound perturbation $(w_{q+1}, d_{q+1})$ in convex integration scheme.
\begin{prop}[Estimates for $(\RRR_q, \MMM_q)$]\label{MMM_qRRR_qL1}
    \begin{align}
    & \|(\RRR_q,\MMM_q)\|_{L^1_{t,x}}\le \delta_{q+1}{\lambda^{-4\alpha}_q}, \\
        &\|(\RRR_q,\MMM_q)\|_{L^\infty_t H^3_x}
       \le \lambda^{22}_q,\\
        &\|(\partial^M_t D^N \RRR_q, \,\partial^M_t D^N\MMM_q)\|_{L^\infty_t H^3_x} \le\lambda^{55+15N}_q , \quad\forall N\ge0, \, M\in\{0,1\}.
    \end{align}
\end{prop}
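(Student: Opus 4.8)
The plan is to estimate the three pieces of $\RRR_q$ and $\MMM_q$ from their explicit formulas \eqref{RRR_q}--\eqref{MMM_q} on each $I_i$, using the stability bounds of Proposition \ref{vi-vq} together with the $H^3$ and time-derivative bounds of Corollary \ref{vibiHs}; since the supports of the $\chi_i$ overlap only for consecutive indices, it suffices to bound a single $I_i$ and the number of intervals $N_q+1\le\lambda_q^{15}$ enters only as a harmless polynomial factor. Recall that on $I_i$ one has $\vv_q-\vex_{i-1}=\chi_{i+1}(\vex_i-\vex_{i-1})$ and $\vex_{i-1}-\vex_i = (\vex_{i-1}-v_q)+(v_q-\vex_i)$, so every term in $\RRR_q,\MMM_q$ is controlled by the difference $\vex_{i-1}-\vex_i$ (and the analogous magnetic difference) in the appropriate norm, with $|\del_t\chi_i|\lesssim\tau_q^{-1}$ and $\tau_q=T\lambda_q^{-15}$.

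For the $L^1_{t,x}$ bound, the quadratic terms $\chi_i(1-\chi_i)(\vex_{i-1}-\vex_i)\ootimes(\vex_{i-1}-\vex_i)$ etc.\ are estimated in $L^1_{t,x}$ by $\|\vex_{i-1}-\vex_i\|_{L^2_{t,x}}^2\lesssim\lambda_q^{-3}$ using \eqref{gai-vi-vqL2} and the time-length $\tau_q$ of $I_i$; summing over $i$ costs $\lambda_q^{15}$ but also picks up a factor $\tau_q$ of integration (or, more efficiently, one notes the $I_i$ are essentially disjoint in time so the sum over $i$ of the $L^1_{t}(I_i)$ norms is just $\|\cdot\|_{L^1_t}$ over their union, of total length $\lesssim\tau_q N_q\lesssim T$). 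The linear term $\del_t\chi_i\,\mathcal R(\vex_{i-1}-\vex_i)$ is the delicate one: one uses the $B^{-1}_{1,1}$ estimate \eqref{e:deriv-vector-potential}, namely $\|\vex_j-v_q\|_{B^{-1}_{1,1}}\lesssim\int_{t_j}^{t_{j+2}}\|(\RR_q,\MM_q)\|_{B^0_{1,1}}\,ds$, so that $\mathcal R$ applied to the difference gains one derivative and $\del_t\chi_i\,\mathcal R(\vex_{i-1}-\vex_i)$ is bounded in $L^1_{t,x}(I_i)$ by $\tau_q^{-1}\cdot|I_i|\cdot\int_{t_{i-1}}^{t_{i+1}}\|(\RR_q,\MM_q)\|_{L^1_{t,x}}$-type quantities; summing and using the inductive bound $\|(\RR_q,\MM_q)\|_{L^1_{t,x}}\le\delta_{q+1}\lambda_q^{-6\alpha}$ from \eqref{e:RR_q-L1} yields $\delta_{q+1}\lambda_q^{-6\alpha}$, which is $\le\delta_{q+1}\lambda_q^{-4\alpha}$ once $a$ (hence $\lambda_q$) is large. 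This matching of the $\mathcal R\del_t\chi$ term back to the previous stress is exactly what makes the glued stress a genuine improvement over $(\RR_q,\MM_q)$, and it is the main obstacle in the argument.

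For the $L^\infty_tH^3_x$ bound one simply applies $\|\mathcal R\|_{H^2\to H^3}\lesssim1$ and the product estimate $\|fg\|_{H^3}\lesssim\|f\|_{H^3}\|g\|_{H^3}$ to the formulas \eqref{RRR_q}--\eqref{MMM_q}, using $\|\vex_i,\bex_i\|_{H^3_x}\lesssim\lambda_q^6$ from \eqref{vibiH3} and $|\del_t\chi_i|\lesssim\tau_q^{-1}=T^{-1}\lambda_q^{15}$: the dominant contribution is $\tau_q^{-1}\cdot\lambda_q^6$ from the linear term and $\lambda_q^{12}$ from the quadratic terms, so the total is $\lesssim\lambda_q^{21}\le\lambda_q^{22}$ for $\lambda_q$ large. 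For the time-space estimate one differentiates \eqref{RRR_q}--\eqref{MMM_q}: each $D^N$ distributes by Leibniz onto the factors, where on $I_i\subset[t_i+\tfrac{\tau_q}{3},t_{i+2}]$ Corollary \ref{vibiHs} gives $\|\del_t^MD^N\vex_i\|_{H^3_x}\lesssim\tau_q^{-N/2-M}\lambda_q^6\lesssim\lambda_q^{15N/2+7}$ (with an extra $\tau_q^{-1}$ whenever a time-derivative lands on $\chi_i$), so that the worst term in $\del_t^MD^N\RRR_q$ behaves like $\tau_q^{-2}\lambda_q^{15N/2+7}\cdot\lambda_q^{15N/2+7}\sim\lambda_q^{30}\lambda_q^{15N}\cdot\lambda_q^{14}$; bookkeeping the exponents carefully gives the stated $\lambda_q^{55+15N}_q$ for $N\ge0$, $M\in\{0,1\}$, again after absorbing constants into the largeness of $a$. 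All three estimates are thus routine once the $\mathcal R\del_t\chi$-to-stress reduction in the $L^1$ bound is in place.
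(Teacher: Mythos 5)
Your proposal follows essentially the same route as the paper: quadratic gluing terms via the $L^2$ stability bound \eqref{gai-vi-vqL2}, the linear term $\del_t\chi_i\,\mathcal R(\vex_{i-1}-\vex_i)$ traced back to the previous stress via the $B^{-1}_{1,1}$ estimate \eqref{e:deriv-vector-potential}, and the $H^3$ and time-space bounds by brute-force Leibniz with $\|\del_t\chi_i\|_{C^0}\lesssim\tau_q^{-1}$ and Corollary \ref{vibiHs}. One imprecision in the $L^1$ step: \eqref{e:deriv-vector-potential} controls the difference by $\int\|(\RR_q,\MM_q)\|_{B^0_{1,1}}\dd s$, and the $B^0_{1,1}$ norm is \emph{not} dominated by the $L^1$ norm (the embedding runs the other way), so you cannot directly invoke the inductive bound $\|(\RR_q,\MM_q)\|_{L^1_{t,x}}\le\delta_{q+1}\lambda_q^{-6\alpha}$ to conclude $\delta_{q+1}\lambda_q^{-6\alpha}$. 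The paper interpolates, $\|(\RR_q,\MM_q)\|_{B^0_{1,1}}\lesssim\|(\RR_q,\MM_q)\|^{1-\varepsilon}_{L^1}\|(\RR_q,\MM_q)\|^{\varepsilon}_{H^3}$ with $0<\varepsilon<\alpha/3$, which costs a factor $(\lambda_q^5)^{\varepsilon}$ and is precisely why the proposition asserts only $\delta_{q+1}\lambda_q^{-4\alpha}$ rather than $\delta_{q+1}\lambda_q^{-6\alpha}$. With that interpolation inserted, your argument matches the paper's.
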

\begin{proof}By the definition of $(\RRR_q, \MMM_q)$ in \eqref{RRR_q} and \eqref{MMM_q}, we have
\begin{align*}
\|(\RRR_q, \MMM_q)\|_{L^1_{t,x}}
\le&\sum_{i=1}^{N_q}\|\del_t \chi_i\|_{L^1_t}\|\mathcal R(\vex_{i-1}-\vex_{i} )\|_{L^\infty(I_i; L^1)}+\sum_{i=1}^{N_q}\|\del_t \chi_i\|_{L^1_t}\|\mathcal R_a(\bex_{i-1}-\bex_{i} )\|_{L^\infty(I_i; L^1)}\\
&+\|\vex_{i-1}-\vex_{i} \|^2_{L^\infty (I_i;L^2)}+\|\bex_{i-1}-\bex_{i} \|^2_{L^\infty (I_i;L^2)}).
\end{align*}
With aid of Proposition \ref{vi-vq}, choosing $0<\varepsilon <\frac{\alpha}{3}$ and  sufficient large $a$, we deduce from the above inequality and \eqref{e:RR_q-L1} that,
\begin{align*}
\|(\RRR_q,\MMM_q)\|_{L^1_{t,x}}
\le&C\sum_{i=1}^{N_q}\int_{t_{i}}^{ t_{i+2}}\|(\RR_q,\MM_q)\|_{ B^{0}_{1,1}}d s+C\lambda^{-3}_q\\
\le&C\|(\RR_q,\MM_q)\|^{1-{\varepsilon}}_{L^1_{t,x}}\|(\RR_q,\MM_q)\|^{{\varepsilon}}_{L^1_t H^3_x}+C\|(\RR_q,\MM_q)\|_{L^1_{t,x}}+C\lambda^{-3}_q\\
\le&C(\delta_{q+1}\lambda^{-6\alpha})^{1-{\varepsilon}}(\lambda^5_q)^{{\varepsilon}}+C\lambda^{-3}_q\le \delta_{q+1}\lambda^{-4\alpha}_q.
\end{align*}
In view of Corollary \ref{vibiHs}, one obtains that
\begin{align*}
\|(\RRR_q,\MMM_q)\|_{L^\infty_t H^3_x}\lesssim &\tau^{-1}_q\|(\vex_{i-1}-\vex_i, \bex_{i-1}-\bex_i)\|_{L^\infty(I_i; H^2)}
+\|(\vex_{i-1}-\vex_i, \bex_{i-1}-\bex_i)\|^2_{L^\infty(I_i; H^2)}
\le\lambda^{22}_q.
\end{align*}
Furthermore, noting the fact that $\RRR_q$ and $\MMM_q$ support on $\mathop{\cup}\limits_{i=1}^{N_q} I_i$, where $\{(\vex_i, \bex_i)\}_{i\ge 0}$ possess $C^\infty$ regularity, we obtain by \eqref{vibitimespace} that for $M=0,1$, $\forall N\ge 0$,
\begin{align*}
&\|(\partial^M_t D^N \RRR_q, \,\,\partial^M_t D^N \MMM_q)\|_{L^\infty_t H^3_x}\\
\lesssim&\tau^{-3}_q\sup_{1\le i\le N_q}\|\partial^M_t(\vex_{i-1}-\vex_{i})\|_{L^\infty(I_i; H^{N+2})}+\tau^{-3}_q\sup_{1\le i\le N_q}\|\partial^M_t(\vex_{i-1}-\vex_{i})\|_{L^\infty(I_i; H^{N+2})}\\
&+\tau^{-2}_q(\|\partial^M_tD^N(\vex_{i-1}, \vex_{i}, \bex_{i-1}, \bex_{i})\|^2_{L^\infty (I_i; H^3)}+\|(\vex_{i-1}, \vex_{i},\bex_{i-1}, \bex_{i})\|^2_{L^\infty (I_i; H^3)})\\
&+\tau^{-2}_q(\|\partial^M_t(\vex_{i-1}, \vex_{i}, \bex_{i-1}, \bex_{i})\|^2_{L^\infty (I_i; H^3)}+\|D^N(\vex_{i-1}, \vex_{i}, \bex_{i-1}, \bex_{i})\|^2_{L^\infty (I_i; H^3)})\\
\le& \lambda^{55+15N}_q,
\end{align*}
where the last inequality holds by sufficient large $a$.
\end{proof}

\section{Convex integration step: construction and estimates for the perturbation}\label{perturbation}
To reduce the size of the Reynolds and magnetic stresses at $q+1$ level, the construction of the perturbation
heavily depends on the two geometric lemmas. For the convenience of readers, we review the two geometric lemmas firstly.
\subsection{Geometric lemmas}
\begin{lem}[First Geometric Lemma\cite{2Beekie}]\label{first L}Let $B_{\delta}(0)$ denote the ball of radius $\delta$ centered at $0$ in the space of $3\times3$ skew-symmetric matrices.
There exists a set $\Lambda_b\in \mathbb{S}^2\cap\mathbb{Q}^3$ that consists of vectors $k$ with associated orthonormal basis $(k,\bar{k},\bar{\bar{k}}),~\epsilon_b>0$ and smooth positive functions $a_{b,k}:B_{\epsilon_b}(0)\rightarrow\mathbb{R},$ such that, for $M_b\in B_{\epsilon_b}(0)$ we have the following identity:
$$M_b=\sum_{k\in\Lambda_b}a_{b,k}(M_b)(\bar{k}\otimes\bar{\bar{k}}-
\bar{\bar{k}}\otimes\bar{k}).$$
\end{lem}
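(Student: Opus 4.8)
The proof relies on the elementary fact that the space of skew-symmetric $3\times 3$ matrices is three-dimensional and linearly isomorphic to $\R^3$. The plan is as follows. First I would recall the ``hat map'' $\mathrm{vec}$, i.e. the linear isomorphism from the space of skew-symmetric $3\times 3$ matrices onto $\R^3$ characterized by $Ax=\mathrm{vec}(A)\times x$ for all $x\in\R^3$; it is bi-Lipschitz, with $|A|\simeq|\mathrm{vec}(A)|$. A one-line computation using the paper's convention $v\otimes u=(v_ju_i)_{i,j}$ shows that for any right-handed orthonormal basis $(k,\bar k,\bar{\bar k})$ of $\R^3$ one has
\[
\mathrm{vec}\big(\bar k\otimes\bar{\bar k}-\bar{\bar k}\otimes\bar k\big)=\bar k\times\bar{\bar k}=k .
\]
Consequently, after applying $\mathrm{vec}$, the asserted identity is equivalent to expressing the vector $\mathrm{vec}(M_b)\in\R^3$ as a positive linear combination $\sum_{k\in\Lambda_b}a_{b,k}(M_b)\,k$ of the prescribed unit vectors, with coefficients depending smoothly on $M_b$; the remaining task is purely about $\R^3$.

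Second, I would take the explicit symmetric set $\Lambda_b:=\{\pm e_1,\pm e_2,\pm e_3\}\subset\mathbb S^2\cap\mathbb Q^3$, equipping each $k\in\Lambda_b$ with a right-handed orthonormal frame $(k,\bar k,\bar{\bar k})$ whose entries lie in $\{-1,0,1\}$ (for instance $(e_1,e_2,e_3)$ for $k=e_1$, $(-e_1,e_3,e_2)$ for $k=-e_1$, and cyclically). By the identity above, the six matrices $\bar k\otimes\bar{\bar k}-\bar{\bar k}\otimes\bar k$ then correspond under $\mathrm{vec}$ to $\pm e_1,\pm e_2,\pm e_3$. For a skew-symmetric $M_b$ write $(m_1,m_2,m_3):=\mathrm{vec}(M_b)$ and set, with a fixed constant $c>0$,
\[
a_{b,\,e_j}(M_b):=c+\tfrac12 m_j,\qquad a_{b,\,-e_j}(M_b):=c-\tfrac12 m_j,\qquad j=1,2,3 .
\]
These are affine in $M_b$, hence $C^\infty$; they are strictly positive whenever $|m_j|<2c$ for all $j$, which holds on $B_{\epsilon_b}(0)$ once $\epsilon_b$ is chosen as a sufficiently small multiple of $c$ (using $|M_b|\simeq|\mathrm{vec}(M_b)|$); and $\sum_{j}(a_{b,e_j}-a_{b,-e_j})\,e_j=\sum_j m_j e_j=\mathrm{vec}(M_b)$, so applying $\mathrm{vec}^{-1}$ gives exactly $M_b=\sum_{k\in\Lambda_b}a_{b,k}(M_b)(\bar k\otimes\bar{\bar k}-\bar{\bar k}\otimes\bar k)$, as required.

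There is no genuine obstacle in this argument; the two points that deserve a little care are (i) pinning down the orientation in the computation $\mathrm{vec}(\bar k\otimes\bar{\bar k}-\bar{\bar k}\otimes\bar k)=k$, which is precisely where the tensor-product convention $v\otimes u=(v_ju_i)$ enters (the opposite convention merely flips a sign, which is harmless since $\Lambda_b$ is symmetric), and (ii) ensuring that the auxiliary frame vectors $\bar k,\bar{\bar k}$ are themselves rational, which is automatic for the coordinate choice above. Any other finite symmetric subset of $\mathbb S^2\cap\mathbb Q^3$ whose $\mathrm{vec}$-preimages positively span the space of skew-symmetric matrices would serve equally well; a particular $\Lambda_b$ enjoying extra disjointness properties relative to the velocity wavevector set may be fixed later in the construction, but only the existence asserted here is needed.
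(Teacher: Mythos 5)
Your proof is correct. The paper itself does not prove this lemma (it is quoted verbatim from \cite{2Beekie}), and your argument --- reducing via the hat map to the statement that $\{\pm e_1,\pm e_2,\pm e_3\}$ positively spans $\mathbb{R}^3$ with explicit affine coefficients $c\pm\tfrac12 m_j$ --- is essentially the standard proof given there, including the correct sign check $\mathrm{vec}(\bar k\otimes\bar{\bar k}-\bar{\bar k}\otimes\bar k)=k$ under the convention $(v\otimes u)_{ij}=v_ju_i$. Your closing caveat is also apt: the lemma as stated only asserts existence, and any additional non-parallelism between $\Lambda_b$ and $\Lambda_u$ (needed later so that the supports $\phi_{(\gamma,\frac12,k)}\phi_{(\gamma,\frac12,k')}$ are genuinely small for $k\neq k'$) is a separate matter of choosing the sets compatibly, not part of this statement.
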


\begin{lem}[Second Geometric Lemma\cite{2Beekie}]\label{first S}Let $B_{\sigma}(0)$ denote the ball of radius $\sigma$ centered at $\rm Id$ in the space of $3\times3$ symmetric matrices.
There exists a set $\Lambda_u\subset\mathbb{S}^2\cap\mathbb{Q}^3$ that consists of vectors $k$ with associated orthonormal basis $(k,\bar{k},\bar{\bar{k}}),~\epsilon_u>0$ and smooth positive functions $a_{u,k}:B_{\epsilon_u}(\rm Id)\rightarrow\mathbb{R}$ such that, for $R_u\in B_{\epsilon_u}(\rm Id)$ we have the following identity:
$$R_u=\sum_{k\in\Lambda_u}a_{u,k}(R_u)\bar{k}\otimes\bar{k}.$$
\end{lem}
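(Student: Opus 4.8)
The two geometric lemmas are classical in the convex integration literature, so the plan is to reduce both to the corresponding results already available for incompressible Euler/Navier–Stokes and to the standard construction for skew-symmetric target matrices (as in \cite{2Beekie}).

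\textbf{Second Geometric Lemma.} I would start from the well-known Geometric Lemma of De Lellis–Sz\'ekelyhidi (see \cite{28DeLellis}), which provides a finite set of rational directions $k\in\mathbb{S}^2\cap\mathbb{Q}^3$ together with smooth positive coefficient functions $\gamma_k$ defined on a neighbourhood of the identity in the space of symmetric $3\times 3$ matrices, with $R=\sum_k \gamma_k(R)^2\, k\otimes k$. Writing $\bar k$ for one of the unit vectors orthogonal to $k$ (so that $(k,\bar k,\bar{\bar k})$ is an orthonormal basis), one can instead arrange the identity in the form $R_u=\sum_{k\in\Lambda_u} a_{u,k}(R_u)\,\bar k\otimes\bar k$ by relabelling: for each direction $k$ in the original family, the rank-one matrix $\bar k\otimes\bar k$ is again of the required type because $\bar k$ is itself rational of unit length, and the orthonormal triple is the cyclic permutation of the original one. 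The smoothness and positivity of $a_{u,k}$ on a ball $B_{\epsilon_u}(\mathrm{Id})$ is inherited from that of $\gamma_k$, possibly after shrinking $\epsilon_u$. The only point requiring care is that $\Lambda_u\subset\mathbb{S}^2\cap\mathbb{Q}^3$ and that the triples $(k,\bar k,\bar{\bar k})$ can be chosen with rational entries simultaneously — this is standard and follows from the density of rational directions together with a compactness argument on the (open) neighbourhood of $\mathrm{Id}$.

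\textbf{First Geometric Lemma.} The target space here is skew-symmetric $3\times 3$ matrices, which form a three-dimensional space with basis $\{\bar k\otimes\bar{\bar k}-\bar{\bar k}\otimes\bar k\}$ as $k$ ranges over the coordinate directions; indeed this is just the identification of a skew matrix with a vector via the Hodge star. So for \emph{any} skew-symmetric $M_b$ one already has an exact representation $M_b=\sum_{k\in\{e_1,e_2,e_3\}} a_{b,k}\,(\bar k\otimes\bar{\bar k}-\bar{\bar k}\otimes\bar k)$ with $a_{b,k}$ linear in $M_b$. To obtain a set $\Lambda_b$ containing enough rational directions (as needed later so that distinct $k$'s generate disjoint frequency supports, and so that $\Lambda_b$ can be taken disjoint from, or in controlled relation with, $\Lambda_u$), one enlarges $\{e_1,e_2,e_3\}$ to a larger rational set $\Lambda_b\subset\mathbb{S}^2\cap\mathbb{Q}^3$ and solves the (now over-determined) linear system for the coefficients; since the three-dimensional space of skew matrices is spanned already by three of the generators, one picks a three-element rational subspanning subset, defines $a_{b,k}$ by the corresponding linear-algebraic formula (Cramer's rule), and sets $a_{b,k}\equiv$ a fixed positive constant plus the linear correction — actually the cleanest route is: choose $a_{b,k}(0)>0$ for every $k\in\Lambda_b$, then note that $\sum_k a_{b,k}(0)(\bar k\otimes\bar{\bar k}-\bar{\bar k}\otimes\bar k)$ need not vanish, so instead one fixes a background representation of $0$ as a positive combination (possible because the generators positively span the skew space, being closed under sign change $k\mapsto -k$ up to reordering the frame) and then perturbs linearly to represent small $M_b$. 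Smoothness (indeed real-analyticity, even affine-linearity) of $M_b\mapsto a_{b,k}(M_b)$ is then immediate, and positivity holds on a sufficiently small ball $B_{\epsilon_b}(0)$ by continuity.

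\textbf{Main obstacle.} The analytic content of both lemmas is trivial — the maps are affine-linear, so smoothness and positivity near the base point are automatic. The genuine work, and the step I expect to be fussiest, is the \emph{simultaneous rationality} bookkeeping: one must choose $\Lambda_u$ and $\Lambda_b$ inside $\mathbb{S}^2\cap\mathbb{Q}^3$, with orthonormal frames $(k,\bar k,\bar{\bar k})$ having rational entries, and (for the later oscillation estimates) with a uniform lower bound on the pairwise "angle" between distinct directions and on $\min_k a_{\cdot,k}$. This is handled exactly as in \cite{28DeLellis,2Beekie}: first produce the representation over $\mathbb{R}$ on a small ball, then invoke the openness of the set of admissible coefficient tuples together with density of rational unit vectors to replace the chosen directions by nearby rational ones without destroying positivity or the spanning property, finally shrinking $\epsilon_u,\epsilon_b$ to absorb the perturbation. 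I would therefore present the proof by citing the classical symmetric-matrix lemma verbatim for Lemma~\ref{first S}, and giving the short self-contained linear-algebra argument above for Lemma~\ref{first L}, with the rationality step quoted from \cite{2Beekie} in both cases.
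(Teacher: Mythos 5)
The paper does not prove this lemma at all: it is quoted verbatim from \cite{2Beekie} (Beekie--Buckmaster--Vicol), so there is no in-paper argument to compare against, and a citation is all that is expected here. Your sketch is nonetheless a correct reconstruction of how the lemma is established in that reference. For the symmetric case, reducing to the De Lellis--Sz\'ekelyhidi decomposition $R=\sum_k\gamma_k(R)^2\,k\otimes k$ and relabelling so that the rank-one matrices appear as $\bar k\otimes\bar k$ is exactly the right move; the only genuine content beyond the classical lemma is, as you say, that each direction admits a completion to an orthonormal frame with rational entries, which is the standard fact invoked in \cite{2Beekie} (and is why the Remark following the lemma insists that each $k$ determines a unique triple $(k,\bar k,\bar{\bar k})$). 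For the skew-symmetric case your final construction is the correct one --- identify skew matrices with vectors via the Hodge star, take a set of rational directions whose positive hull has $0$ in its interior (so at least four directions, closed under $k\mapsto -k$ after reordering the frame), fix a strictly positive representation of $0$, and perturb affinely --- and you are right to discard your intermediate three-element/Cramer's-rule variant, since three generators cannot yield strictly positive coefficients at and near the origin. One cosmetic point: the statement's ``$B_\sigma(0)$ centered at $\mathrm{Id}$'' is a typo inherited from the paper (the ball is centered at $\mathrm{Id}$, consistent with your reading). In short, your proposal is sound and agrees with the proof in the cited source; presenting it as a citation plus the linear-algebra remark, as you propose, matches what the paper itself does.
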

\begin{rem}
It is worth noting that, once we choose a vector $k$ in $\Lambda_u\cup\Lambda_b$, a unique triple $(k,\bar{k},\bar{\bar{k}})$ could be obtained, see \cite{2Beekie} for more details.
\end{rem}
\subsection{Amplitudes}
Let $\eta_q\in C_c^\infty([2T, 3T];[0,1])$  satisfy
\[  \text{supp}\, \eta_q =\big(2T+\tfrac{7\tau_q}{6}, 3T-\tfrac{\tau_q}{6}\big)\quad\text{and}\quad \eta_q(t)\equiv 1 \,\text{for}\,\,t\in \big[2T+\tfrac{4\tau_q}{3}, 3T-\tfrac{\tau_q}{3}\big].\]
It is easy to check that for $N\ge 1$, $\|\del_t ^N \eta_q\|_{C^0_t} \lesssim \tau_q^{-N}. $

Note that the geometric lemmas are valid for skew-symmetric matrices perturbed near $0$ matrix and symmetric matrices perturbed near $\rm Id$ matrix, we need the following smooth function $\chi$ introduced in  \cite{2Beekie} such that the stresses are pointwise small. More precisely: Let $\chi:[0,\infty)\to \mathbb{R}^{+}$ be a smooth function satisfying
\begin{equation}
\chi(z)=\left\{ \begin{alignedat}{-1}
&1, \quad 0\le z\le 1,\\
&z, \quad z\ge 2,
\end{alignedat}\right.
\end{equation}
with $z\le 2\chi(z)\le 4z$ for $z\in (1,2)$. Next, we define
\[\rho_b=\delta_{q+1}\lambda^{-2\alpha}_q\chi\Big(\Big\langle\tfrac{\MMM_q}{\delta_{q+1}\lambda^{-4\alpha}_q}\Big\rangle\Big),\,\,
{\rho_v=\delta_{q+1}\lambda^{-2\alpha}_q}\chi\Big(\Big\langle\tfrac{R_v}{\delta_{q+1}\lambda^{-3\alpha}_q}\Big\rangle\Big),\]
where
\[R_v=\RRR_q+\sum_{k\in \Lambda_b}\eta^2_q\rho_ba^2_{b,k}\Big(-\tfrac{\MMM_q}{\rho_b}\Big)(\bar{k}\otimes \bar{k}-\bar{\bar{k}}\otimes \bar{\bar{k}}).\]
Now we define amplitudes $a_{(v,k)}$ and $a_{(b,k)}$ as follows:
\begin{equation}\label{a(v,k)a(b,k)}
a_{(v,k)}:=\eta_{q}\rho^{\frac{1}{2}}_{v}a_{v,k}({\rm Id}-
\tfrac{R_v}{\rho_v}),\quad a_{(b,k)}:=\eta_{q}\rho^{\frac{1}{2}}_{b}a_{b,k}(-
\tfrac{\MMM_q}{\rho_b}),
\end{equation}
where $a_{b,k}$ and  $a_{v,k}$  stem from Lemma \ref{first L} and Lemma \ref{first S}, respectively.
\subsection{Constructing the perturbation}
Assume that $\phi:\mathbb{R}\rightarrow\mathbb{R}$ is a smooth cutoff function supported on the interval $[0,\frac{1}{2}]$. We normalize it in such a way that
$$\int_{\mathbb{R}}\phi^2\dd x=1.$$
For any small positive parameter $r$, we denote by $\phi_r$ the following rescaled functions:
$$\phi_r :=r^{-\frac{1}{2}}\phi (r^{-1}x).$$
We periodize $\phi_r$ so that  the resulting functions are periodic functions defined on $\mathbb{R}/ \mathbb{Z}=\mathbb{T}$.
Next, letting $k\in\Lambda_v\cup \Lambda_b$, we choose a large number  $N_{\Lambda}$ such that
$N_{\Lambda}{k},N_{\Lambda} \bar{k},N_{\Lambda}\bar{\bar{k}}\in \mathbb{Z}^3.$
Here and below, we use the following denotations:
$$\phi_{(\gamma,\tfrac{1}{2},k)}(x): =\phi_{\lambda^{-\gamma}_{q+1}}(\lambda^{\tfrac{1}{2}}_{q+1}N_{\Lambda}k\cdot x),~~g_{{(2, \iota)}}(t):=\phi_{\lambda^{-2}_{q+1}}(\lambda^{\sigma}_{q+1}t).$$
Without ambiguity, we denote $\phi_{(\gamma,\tfrac{1}{2},k)}(x)$ by $\phi_{(\gamma,\tfrac{1}{2},k)}$ only depending on spatial variable and denote $g_{{(2, \sigma)}}(t)$ by $g_{{(2, \sigma)}}$ only depending on time variable in the rest of this paper. Let $\psi=\frac{\dd^2}{\dd x^2}\Psi\in C^{\infty}(\mathbb{T})$. We define
$$\psi_k:=\psi(\lambda_{q+1}N_{\Lambda}k\cdot x)$$
 and normalize it such that $\|\psi_k\|_{L^2}=1$.

 The construction of perturbation are involved in the so-called \emph{intermittent shear flows} $\phi_{(\gamma,\tfrac{1}{2},k)}\bar{k}$ or $\phi_{(\gamma,\tfrac{1}{2},k)}\bar{\bar{k}}$. One can easily check that the intermittent shear flows have zero mean and support on sheets with size $\thickapprox\lambda^{-\gamma}_{q+1}\times 1\times 1$. In summary, we have the following estimates:
\begin{prop}\label{guji1}
For $p\in[1,\infty]$ and $m\in\mathbb{N}$, we have the following estimations
$$\big\|{D}^m_x\phi_{(\gamma,\tfrac{1}{2},k)}\big\|_{L^p_x(\TTT^3)}\leq \lambda^{\frac{m}{2}-\gamma(\frac{1}{p}-\frac{1}{2})}_{q+1} , \qquad\big\|\tfrac{\dd ^m}{\dd t}g_{{(2, \sigma)}}\big\|_{L^p_t([0,1])}\leq \lambda^{{\sigma m}-2(\frac{1}{p}-\frac{1}{2})}_{q+1},
$$
and the following estimates for the size of support of $\phi_{(\gamma,\tfrac{1}{2},k)}$:
\[\Big|{\rm supp} \big(\phi_{(\gamma,\tfrac{1}{2},k)}\big)\Big|\lesssim \lambda^{-\gamma}_{q+1}, \quad \Big|{\rm supp} \big(\phi_{(\gamma,\tfrac{1}{2},k)}\phi_{(\gamma,\tfrac{1}{2},k')}\big)\Big|\lesssim \lambda^{-2\gamma}_{q+1}\,\, \,{\rm if}\,\,k\neq k'.\]
\end{prop}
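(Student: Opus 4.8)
The statement to prove is Proposition \ref{guji1}, which collects the basic $L^p$ and support estimates for the intermittent shear building blocks $\phi_{(\gamma,\frac12,k)}$ and the temporal concentration function $g_{(2,\sigma)}$.

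\medskip

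\textbf{Plan of proof.} The whole proposition is a direct consequence of a change of variables together with the definitions $\phi_{(\gamma,\frac12,k)}(x)=\phi_{\lambda_{q+1}^{-\gamma}}(\lambda_{q+1}^{1/2}N_\Lambda k\cdot x)$ and $g_{(2,\sigma)}(t)=\phi_{\lambda_{q+1}^{-2}}(\lambda_{q+1}^\sigma t)$, where $\phi_r(x)=r^{-1/2}\phi(r^{-1}x)$ is the rescaled and then periodized profile with $\int_{\mathbb R}\phi^2=1$ and $\supp\phi\subset[0,\frac12]$. First I would record the elementary one–dimensional fact: for the periodized $\phi_r$ on $\mathbb T$ (which, because $\supp\phi\subset[0,\frac12]$ and $r<1$, is just a single bump of width $\sim r$ on the torus), one has $\|D^m\phi_r\|_{L^p(\mathbb T)}\lesssim r^{-m-(1/2)+(1/p)}$ — indeed $D^m\phi_r(x)=r^{-1/2-m}(D^m\phi)(r^{-1}x)$, and the substitution $y=r^{-1}x$ gives $\|D^m\phi_r\|_{L^p}^p = r^{-p/2-pm}\int |D^m\phi(y)|^p\, r\,dy = r^{1-p/2-pm}\|D^m\phi\|_{L^p}^p$, i.e. $\|D^m\phi_r\|_{L^p}=\|D^m\phi\|_{L^p}\, r^{(1/p)-(1/2)-m}$. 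Since $\phi$ is a fixed smooth profile, $\|D^m\phi\|_{L^p}$ is an absolute constant depending only on $m$ (and it is harmless to absorb it, since in the convex integration scheme these are multiplied by large powers of $\lambda_{q+1}$; alternatively one normalizes so the constant is $\le 1$).

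\medskip

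\textbf{Key steps.} (i) Apply the one–dimensional computation with $r=\lambda_{q+1}^{-\gamma}$ to the profile evaluated along the direction $k$. Because $N_\Lambda k,N_\Lambda\bar k,N_\Lambda\bar{\bar k}\in\mathbb Z^3$, the map $x\mapsto \lambda_{q+1}^{1/2}N_\Lambda k\cdot x$ sends $\mathbb T^3$ onto $\mathbb T$ covering it $\lambda_{q+1}^{1/2}N_\Lambda$ times in the $k$–direction, and $\phi_{(\gamma,\frac12,k)}$ is constant in the two orthogonal directions $\bar k,\bar{\bar k}$; hence by Fubini the three–dimensional $L^p(\mathbb T^3)$ norm equals the one–dimensional $L^p(\mathbb T)$ norm of $\phi_{\lambda_{q+1}^{-\gamma}}$ composed with a measure–preserving (after rescaling) covering map — so the normalization $\int_{\mathbb T^3}|\phi_{(\gamma,\frac12,k)}|^2 = 1$ is preserved, and in general $\|D_x^m\phi_{(\gamma,\frac12,k)}\|_{L^p_x(\mathbb T^3)} = \lambda_{q+1}^{m/2}\,\|D^m\phi_{\lambda_{q+1}^{-\gamma}}\|_{L^p(\mathbb T)} \lesssim \lambda_{q+1}^{m/2}\,(\lambda_{q+1}^{-\gamma})^{(1/p)-(1/2)} = \lambda_{q+1}^{\frac m2 - \gamma(\frac1p-\frac12)}$, the factor $\lambda_{q+1}^{m/2}$ coming from the $m$ spatial derivatives hitting the inner factor $\lambda_{q+1}^{1/2}N_\Lambda k\cdot x$ via the chain rule (the constant $N_\Lambda$ is absorbed). (ii) The temporal estimate is the identical computation in one variable with $r=\lambda_{q+1}^{-2}$ and the dilation $t\mapsto\lambda_{q+1}^\sigma t$ on $[0,1]$: each $t$–derivative produces a factor $\lambda_{q+1}^\sigma$, and the change of variables produces $(\lambda_{q+1}^{-2})^{(1/p)-(1/2)}$, giving $\|\frac{d^m}{dt^m}g_{(2,\sigma)}\|_{L^p_t([0,1])}\lesssim \lambda_{q+1}^{\sigma m - 2(\frac1p-\frac12)}$. (iii) For the support bounds: since $\supp\phi\subset[0,\frac12]$, after rescaling by $r=\lambda_{q+1}^{-\gamma}$ the bump $\phi_{\lambda_{q+1}^{-\gamma}}$ is supported in an interval of length $\tfrac12\lambda_{q+1}^{-\gamma}$ in $\mathbb T$; pulling back through $x\mapsto \lambda_{q+1}^{1/2}N_\Lambda k\cdot x$ this becomes a union of thin slabs orthogonal to $k$, and the total measure in $\mathbb T^3$ of $\{\phi_{(\gamma,\frac12,k)}\neq 0\}$ is $\lesssim \lambda_{q+1}^{-\gamma}$ (the slab thickness, the covering multiplicity cancelling against the spacing). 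For two distinct directions $k\neq k'$ in $\Lambda_v\cup\Lambda_b$ the two slab families are transverse (both $k,k'$ are rational unit vectors), so their intersection has measure $\lesssim \lambda_{q+1}^{-\gamma}\cdot\lambda_{q+1}^{-\gamma} = \lambda_{q+1}^{-2\gamma}$; here one uses that the angle between $k$ and $k'$ is bounded below by a constant depending only on the finite sets $\Lambda_v,\Lambda_b$ from Lemmas \ref{first L}–\ref{first S}.

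\medskip

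\textbf{Main obstacle.} There is no serious obstacle — this is a bookkeeping lemma. The only points requiring a little care are: confirming that periodization does not spoil the $L^p$ scaling (it does not, precisely because $r=\lambda_{q+1}^{-\gamma}<1$ so the rescaled bump still fits inside one period and the periodized function is a single translate of the non-periodized one on $\mathbb T$), keeping track that each derivative in $x$ contributes $\lambda_{q+1}^{1/2}$ (not $\lambda_{q+1}^{-\gamma}$) because the high-frequency oscillation sits in the argument $\lambda_{q+1}^{1/2}N_\Lambda k\cdot x$ while $r$ governs only the shape, and checking the transversality of distinct $k,k'\in\Lambda_v\cup\Lambda_b$ for the last support estimate. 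All constants depend only on $\phi$, $m$, $N_\Lambda$, and the fixed finite sets $\Lambda_v,\Lambda_b$, hence are universal in the iteration.
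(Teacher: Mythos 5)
The paper states Proposition \ref{guji1} without any proof, so you are supplying the standard argument from scratch, and your overall route --- the one--dimensional rescaling identity for $\phi_r$, transfer to $\TTT^3$ by Fubini along the direction $k$ (using $N_\Lambda k\in\ZZ^3$ and constancy in the $\bar k,\bar{\bar k}$ directions), and transversality of the two slab families for distinct $k\neq k'$ to get the $\lambda_{q+1}^{-2\gamma}$ intersection bound --- is exactly the right one. The $m=0$ norm estimates and both support estimates are correct as you argue them.

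For $m\ge 1$, however, your derivative estimates contain a concrete gap that contradicts your own preliminary computation. You correctly record $\|D^m\phi_r\|_{L^p(\TTT)}=\|D^m\phi\|_{L^p}\,r^{\frac1p-\frac12-m}$, which carries the factor $r^{-m}$; but in Key step (i) you then write $\|D^m\phi_{\lambda_{q+1}^{-\gamma}}\|_{L^p(\TTT)}\lesssim(\lambda_{q+1}^{-\gamma})^{\frac1p-\frac12}$, silently discarding $r^{-m}=\lambda_{q+1}^{\gamma m}>1$. This is not a removable slip in the write-up: on its support $\phi_{(\gamma,\frac12,k)}(x)=\lambda_{q+1}^{\gamma/2}\,\phi\big(\lambda_{q+1}^{\frac12+\gamma}N_\Lambda k\cdot x\big)$, so each spatial derivative genuinely costs $\lambda_{q+1}^{\frac12+\gamma}$, and the scaling argument yields the exponent $m(\tfrac12+\gamma)-\gamma(\tfrac1p-\tfrac12)$ rather than $\tfrac m2-\gamma(\tfrac1p-\tfrac12)$. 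The same issue is larger for the temporal function: $g_{(2,\sigma)}$ oscillates at temporal frequency $\lambda_{q+1}^{\sigma+2}$ (period $\lambda_{q+1}^{-\sigma}$ with concentration scale $\lambda_{q+1}^{-2}$ inside each period), so each $t$-derivative costs $\lambda_{q+1}^{\sigma+2}$, not the $\lambda_{q+1}^{\sigma}$ you assert, and the correct exponent is $m(\sigma+2)-2(\tfrac1p-\tfrac12)$. In short, pure scaling cannot deliver the inequalities exactly as displayed for $m\ge1$; you should either carry the extra factors $\lambda_{q+1}^{\gamma m}$ and $\lambda_{q+1}^{2m}$ explicitly and verify they are harmless where the proposition is invoked, or flag that the displayed exponents need correcting.
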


Based on the above intermittent shear flows, we define the \emph{principal parts} of the perturbations $w^{(p)}_{q+1}$ and $d^{(p)}_{q+1}$ as follows:
\begin{align*}
&w^{(p)}_{q+1}
:=\sum_{k\in \Lambda_v}a_{(v,k)}\phi_{{(\gamma,\tfrac{1}{2},k)}}g_{{(2,\sigma)}}\psi_k\bar{k}+\sum_{k\in \Lambda_b}a_{(b,k)}\phi_{{(\gamma,\tfrac{1}{2},k)}}g_{{(2, \sigma)}}\psi_k\bar{k},\\
&d^{(p)}_{q+1}:=\sum_{k\in \Lambda_b}a_{(b,k)}\phi_{{(\gamma,\tfrac{1}{2},k)}}g_{{(2, \sigma)}}\psi_k\bar{\bar{k}},
\end{align*}
where $a_{(b,k)}$ and $a_{(v,k)}$ are defined in \eqref{a(v,k)a(b,k)}.

Next, we need to construct the \emph{incompressibility correctors}  to ensure that the perturbation is divergence-free.
Noting the fact that $\psi_k\bar{k}$  is zero mean and divergence free, we have
$$\psi_k\bar{k}
=\lambda^{-1}_{q+1}N^{-1}_{\Lambda}\curl\Big(-\big(\frac{\dd}{\dd x}\Psi\big)(\lambda_{q+1}N_{\Lambda} k\cdot x)k\times\bar{k}\Big):=\lambda^{-1}_{q+1}\curl F_{\bar{k}}.$$
The \emph{incompressibility correctors} $w^{(c)}_{q+1}$ and $d^{(c)}_{q+1}$ are defined as
\begin{align*}
w^{(c)}_{q+1}=&\lambda^{-1}_{q+1}\sum_{k\in \Lambda_v}{\nabla\big(a_{(v,k)}\phi_{{(\gamma,\tfrac{1}{2},k)}}g_{{(2, \sigma)}}\big)}\times F_{\bar{k}}+\lambda^{-1}_{q+1}\sum_{k\in \Lambda_b}{\nabla\big(a_{(b,k)}\phi_{{(\gamma,\tfrac{1}{2},k)}}g_{{(2, \sigma)}}\big)}\times F_{\bar{k}},
\end{align*}
and
\begin{align*}
d^{(c)}_{q+1}=\lambda^{-1}_{q+1}\sum_{k\in \Lambda_b}{\nabla\big(a_{(v,k)}\phi_{{(\gamma,\tfrac{1}{2},k)}}g_{{(2, \sigma)}}\big)} \times F_{\bar{\bar{k}}}.
\end{align*}
Owning to  $\curl(aF)=\nabla a\times F+a\curl F$, it is easy to verify that
\begin{equation}\label{wp+wc}
\begin{aligned}
w^{(p)}_{q+1}+w^{(c)}_{q+1}
=&\lambda^{-1}_{q+1}\sum_{k\in \Lambda_v}{\curl \big(a_{(v,k)}\phi_{{(\gamma,\tfrac{1}{2},k)}}g_{{(2, \sigma)}} F_{\bar{k}}\big)}\\
&+\lambda^{-1}_{q+1}\sum_{k\in \Lambda_b}{\curl \big(a_{(v,k)}\phi_{{(\gamma,\tfrac{1}{2},k)}}g_{{(2, \sigma)}} F_{\bar{k}}\big)},
\end{aligned}
\end{equation}
and
\begin{equation}\label{d^p_q+1+d^c_q+1}
d^{(p)}_{q+1}+d^{(c)}_{q+1}=\lambda^{-1}_{q+1}\sum_{k\in \Lambda_b}{\curl \big(a_{(v,k)}\phi_{{(-\gamma,\tfrac{1}{2},k)}}g_{{(2, \sigma)}} F_{\bar{\bar{k}}}\big)}.
\end{equation}
Therefore, $\div (w^{(p)}_{q+1}+w^{(c)}_{q+1})=\div (d^{(p)}_{q+1}+d^{(c)}_{q+1})=0$.

Finally, Let $h_{\sigma}(t):[0, 1]\to\R$ be defined by
\[h_{\sigma}(t)=\lambda^{-\sigma}_{q+1}\int_{0}^{\lambda^\sigma_{q+1}t} (\PP g^2_{{(2, \sigma)}})(s)\dd s,\]
 where $\PP$ is the projection onto functions with zero mean, that is $\PP f(s):=f(s)-\int_{\TTT^3}f\dd s$.

 The \emph{temporal correctors} are defined by
\begin{equation}\label{wt}
\begin{aligned}
{w^{(t)}_{q+1}}=&-\sum_{k\in \Lambda_v}\mathbb{P}_H\div( a^2_{(v,k)}\bar{k}\otimes \bar{k})h_{\sigma}(t)-\sum_{k\in \Lambda_b}\mathbb{P}_H\div(a^2_{(b,k)}(\bar{k}\otimes \bar{k}-\bar{\bar{k}}\otimes \bar{\bar{k}}))h_{\sigma}(t)\\
{d^{(t)}_{q+1}}=&-\sum_{k\in \Lambda_b}\div(a^2_{(b,k)}(\bar{k}\otimes\bar{\bar{k}} -\bar{k}\otimes \bar{\bar{k}}))h_{\sigma}(t),
\end{aligned}
\end{equation}
where $\mathbb{P}_H:={\rm Id}-\frac{\nabla\div}{\Delta}$ represents the Leray projector. Therefore, we have $\div {w^{(t)}_{q+1}}=0$. Moreover,  ${d^{(t)}_{q+1}}$ is divergence-free because $\div\div A=0$ for any anti-symmetric matrix $A$.

Moreover, we deduce by the support of $\eta_q(t)$ that
\begin{equation}\label{dwt}
(d_{p+1}, w_{q+1})(t,x)=0, \quad\forall t\in [0,2T+\tfrac{7\tau_q}{6}] \cup[ 3T-\tfrac{\tau_q}{6}, 1\big].
\end{equation}
\subsection{Estimates for the perturbation}
In this section, we establish estimates for the perturbation $(w_{q+1}, d_{q+1})$, which implies that estimates \eqref{e:vq-LpLinftyC1-}-\eqref{e:vqbq-H3} hold at $q+1$ level.
\begin{prop}[estimates on $\rho_v$ and $\rho_b$ ]\label{rho_v,rho_b}
\begin{align*}
\|(\rho_v,\rho_b)\|_{L^1_{t,x}}\le \delta_{q+1}\lambda^{-\alpha}_q.
\end{align*}
\begin{proof}By the definition of $\rho_b$ and Proposition \ref{MMM_qRRR_qL1}, we obtain that
\begin{align*}
\|\rho_b\|_{L^1_{t,x}}
=&\delta_{q+1}\lambda^{-2\alpha}_q(\int_{\{(x,t)\in \TTT^3\times [0,1]||\tfrac{|\MMM_q|}{\delta_{q+1}\lambda^{-4\alpha}_q}|\le 1\}}\chi(\langle\tfrac{\MMM_q}{\delta_{q+1}\lambda^{-4\alpha}_q}\rangle )\dd x \dd t)\\
&+\delta_{q+1}\lambda^{-2\alpha}_q(\int_{\{(x,t)\in \TTT^3\times [0,1]||\tfrac{|\MMM_q|}{\delta_{q+1}\lambda^{-4\alpha}_q}|\ge 1\}}\chi(\langle\tfrac{\MMM_q}{\delta_{q+1}\lambda^{-4\alpha}_q}\rangle )\dd x \dd t)\\
\lesssim&\delta_{q+1}\lambda^{-2\alpha}_q+\delta_{q+1}\lambda^{-2\alpha}_q
\big\|\tfrac{\MMM_q}{\delta_{q+1}\lambda^{-4\alpha}_q}\big\|_{L^1_{t,x}}\lesssim\delta_{q+1}\lambda^{-2\alpha}_q.
\end{align*}
and
\begin{align*}
\|\rho_v\|_{L^1_{t,x}}
\lesssim&\delta_{q+1}\lambda^{-2\alpha}_q+\delta_{q+1}\lambda^{-2\alpha}_q
\big(\big\|\tfrac{\RRR_q}{\delta_{q+1}\lambda^{-3\alpha}_q}\big\|_{L^1_{t,x}}+\big\|\tfrac{\rho_b}{\delta_{q+1}\lambda^{-3\alpha}_q}\big\|_{L^1_{t,x}}\big)
\lesssim\delta_{q+1}\lambda^{-\alpha}_q.
\end{align*}
\end{proof}
\end{prop}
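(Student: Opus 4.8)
The plan is to reduce the entire estimate to the single stress bound $\|(\RRR_q,\MMM_q)\|_{L^1_{t,x}}\le\delta_{q+1}\lambda^{-4\alpha}_q$ of Proposition~\ref{MMM_qRRR_qL1}, together with elementary pointwise properties of the truncation $\chi$. First I would record that, since $\chi\equiv1$ on $[0,1]$, $z\le2\chi(z)\le4z$ on $(1,2)$ and $\chi(z)=z$ on $[2,\infty)$, one has the two-sided bound $\tfrac12\max\{1,z\}\le\chi(z)\le2\max\{1,z\}$ for all $z\ge0$; composing with the bracket $\langle\cdot\rangle$, which satisfies $|m|\le\langle m\rangle\le1+|m|$, this yields the pointwise inequalities
\[
\chi(\langle m\rangle)\le 2\bigl(1+|m|\bigr)\qquad\text{and}\qquad \chi(\langle m\rangle)\ge\tfrac12\max\{1,|m|\}.
\]
The upper bound will be used to push $L^1$-norms of stresses through $\chi$; the lower bound will be used to check that the matrices fed to $a_{b,k}$ and $a_{v,k}$ stay in the balls where Lemmas~\ref{first L}--\ref{first S} apply.

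For $\rho_b$ only the upper bound is needed, and since $\TTT^3\times[0,1]$ has unit measure,
\[
\|\rho_b\|_{L^1_{t,x}}
=\delta_{q+1}\lambda^{-2\alpha}_q\Bigl\|\chi\bigl(\langle\tfrac{\MMM_q}{\delta_{q+1}\lambda^{-4\alpha}_q}\rangle\bigr)\Bigr\|_{L^1_{t,x}}
\le 2\delta_{q+1}\lambda^{-2\alpha}_q\Bigl(1+\tfrac{1}{\delta_{q+1}\lambda^{-4\alpha}_q}\|\MMM_q\|_{L^1_{t,x}}\Bigr)\le 4\,\delta_{q+1}\lambda^{-2\alpha}_q .
\]
Before treating $\rho_v$ I would control $R_v$ in $L^1$. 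Because $\rho_b\ge\tfrac12\delta_{q+1}\lambda^{-2\alpha}_q>0$ and $\chi(\langle m\rangle)\ge\tfrac12|m|$, the ratio is pointwise small:
\[
\Bigl|\tfrac{\MMM_q}{\rho_b}\Bigr|
=\frac{|\MMM_q|\,\lambda^{2\alpha}_q}{\delta_{q+1}\,\chi\bigl(\langle \MMM_q/(\delta_{q+1}\lambda^{-4\alpha}_q)\rangle\bigr)}
\le 2\lambda^{-2\alpha}_q\le\tfrac12\epsilon_b
\]
once $a$ (hence $\lambda_q$) is large. Thus $a_{b,k}(-\MMM_q/\rho_b)$ is well-defined, and $a_{b,k}$, being smooth on $B_{\epsilon_b}(0)$ and evaluated only on a fixed compact sub-ball, satisfies $\|a_{b,k}(-\MMM_q/\rho_b)\|_{L^\infty}\le C$. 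Inserting this and $|\eta_q|\le1$ into the definition of $R_v$ gives $\|R_v\|_{L^1_{t,x}}\le\|\RRR_q\|_{L^1_{t,x}}+C\sum_{k\in\Lambda_b}\|\rho_b\|_{L^1_{t,x}}\lesssim\delta_{q+1}\lambda^{-2\alpha}_q$.

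Finally, $\rho_v$ is estimated exactly like $\rho_b$, with $R_v$ in place of $\MMM_q$ and the normalisation $\delta_{q+1}\lambda^{-3\alpha}_q$ in place of $\delta_{q+1}\lambda^{-4\alpha}_q$:
\[
\|\rho_v\|_{L^1_{t,x}}\lesssim\delta_{q+1}\lambda^{-2\alpha}_q\Bigl(1+\tfrac{1}{\delta_{q+1}\lambda^{-3\alpha}_q}\|R_v\|_{L^1_{t,x}}\Bigr)\lesssim\delta_{q+1}\lambda^{-2\alpha}_q\bigl(1+\lambda^{\alpha}_q\bigr)\lesssim\delta_{q+1}\lambda^{-\alpha}_q .
\]
Summing the two bounds and absorbing the implicit constants into a small power of $\lambda_q$ by taking $a$ large — as done throughout the paper — yields $\|(\rho_v,\rho_b)\|_{L^1_{t,x}}\le\delta_{q+1}\lambda^{-\alpha}_q$. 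The step I expect to need the most care is the pointwise smallness $|\MMM_q/\rho_b|\le2\lambda^{-2\alpha}_q$ (and, for the amplitudes of the next subsection, the analogous $|R_v/\rho_v|\le2\lambda^{-3\alpha}_q$), since this is exactly what forces the arguments of $a_{b,k}$ and $a_{v,k}$ into the domains of validity of Lemmas~\ref{first L}--\ref{first S}; everything else is bookkeeping with $\chi$ and the stress estimate of Proposition~\ref{MMM_qRRR_qL1}.
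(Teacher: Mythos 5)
Your proof is correct and follows essentially the same route as the paper: bound $\chi(\langle\cdot\rangle)$ pointwise by a constant plus the linear term, feed in the $L^1$ stress bounds of Proposition~\ref{MMM_qRRR_qL1}, and control $R_v$ by $\|\RRR_q\|_{L^1_{t,x}}+C\|\rho_b\|_{L^1_{t,x}}$ before repeating the argument for $\rho_v$. The only addition is your explicit verification that $|\MMM_q/\rho_b|\lesssim\lambda_q^{-2\alpha}$ keeps the argument of $a_{b,k}$ in $B_{\epsilon_b}(0)$, which the paper leaves implicit but which is indeed the point that makes the $R_v$ bound legitimate.
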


\begin{prop}\label{a{(b,k)}a{(v,k)}}
\begin{align*}
&\|(a_{(b,k)}, a_{(v,k)})\|_{L^2_{t,x}}\lesssim\delta^{\frac{1}{2}}_{q+1}\lambda^{-\frac{\alpha}{2}}_q, \quad\,\,\,\|(a_{(b,k)}, a_{(v,k)})\|_{L^\infty_{t,x}}\le\lambda^{12}_q,\\
&\|(\partial^M_ta_{(b,k)}, \partial^M_ta_{(v,k)})\|_{L^\infty_t H^5_x}\le\lambda^{1000}_q, \, M=0,1.
\end{align*}

\end{prop}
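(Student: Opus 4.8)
The plan is to estimate the amplitudes $a_{(v,k)}$ and $a_{(b,k)}$ by unwinding their definitions in \eqref{a(v,k)a(b,k)} and exploiting the structure of the smooth cutoff $\chi$, the geometric lemmas, and the already-established bounds on $(\RRR_q,\MMM_q)$ from Proposition \ref{MMM_qRRR_qL1}. First I would record the pointwise facts that $\rho_v,\rho_b \ge \delta_{q+1}\lambda_q^{-2\alpha}$ (since $\chi \ge 1$) and that, on the support of $\eta_q$, the arguments $\mathrm{Id} - R_v/\rho_v$ and $-\MMM_q/\rho_b$ lie in the balls $B_{\epsilon_v}(\mathrm{Id})$ and $B_{\epsilon_b}(0)$ where Lemma \ref{first S} and Lemma \ref{first L} apply; this is exactly why $\rho_v,\rho_b$ were built with the $\delta_{q+1}\lambda_q^{-3\alpha}$ (resp.\ $\delta_{q+1}\lambda_q^{-4\alpha}$) normalizations and the $\chi$-truncation, so the functions $a_{v,k}, a_{b,k}$ and all their derivatives are bounded by absolute constants on these domains.

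For the $L^2_{t,x}$ bound I would use $\|a_{(b,k)}\|_{L^2_{t,x}} \le \|\eta_q\|_{L^\infty} \|a_{b,k}(-\MMM_q/\rho_b)\|_{L^\infty} \|\rho_b^{1/2}\|_{L^2_{t,x}} \lesssim \|\rho_b\|_{L^1_{t,x}}^{1/2}$, and then invoke Proposition \ref{rho_v,rho_b} to get $\lesssim (\delta_{q+1}\lambda_q^{-\alpha})^{1/2} = \delta_{q+1}^{1/2}\lambda_q^{-\alpha/2}$; the same argument with $\rho_v$ in place of $\rho_b$ gives the $a_{(v,k)}$ estimate. For the $L^\infty_{t,x}$ bound I would note $\rho_b \le \delta_{q+1}\lambda_q^{-2\alpha} \cdot 2\langle \MMM_q/(\delta_{q+1}\lambda_q^{-4\alpha})\rangle \lesssim \delta_{q+1}\lambda_q^{-2\alpha} + \|\MMM_q\|_{L^\infty_{t,x}}\lambda_q^{2\alpha}$, and then use the $H^3 \hookrightarrow L^\infty$ embedding together with $\|\MMM_q\|_{L^\infty_t H^3_x} \le \lambda_q^{22}$ from Proposition \ref{MMM_qRRR_qL1}; since $\delta_{q+1}\le \delta_2 \le \lambda_2^{3\beta}$ is a fixed constant and $2\alpha$ is small, taking square roots and choosing $a$ large yields $\|a_{(b,k)}\|_{L^\infty_{t,x}}\le \lambda_q^{12}$, and the analogous computation with $R_v = \RRR_q + \sum \eta_q^2 \rho_b a_{b,k}^2(\cdots)(\cdots)$ — whose $L^\infty_t H^3_x$ norm is controlled by $\|\RRR_q\|_{L^\infty_t H^3_x}$ plus a product of factors each bounded by a small power of $\lambda_q$ — gives the bound on $\rho_v$ and hence on $a_{(v,k)}$.

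The derivative estimate in $L^\infty_t H^5_x$ is the most delicate: I would differentiate $a_{(b,k)} = \eta_q \rho_b^{1/2} a_{b,k}(-\MMM_q/\rho_b)$ up to five spatial derivatives and one time derivative via the chain and Leibniz rules, so that each term is a product of: derivatives of $\eta_q$ (bounded by $\tau_q^{-1} = T^{-1}\lambda_q^{15}$), derivatives of $\rho_b^{1/2}$, and derivatives of $a_{b,k}$ composed with $-\MMM_q/\rho_b$. The key is that on the region $\{\langle \MMM_q/(\delta_{q+1}\lambda_q^{-4\alpha})\rangle \le 1\}$ one has $\rho_b = \delta_{q+1}\lambda_q^{-2\alpha}$ constant, so all derivatives land on $\MMM_q$, while on $\{\langle\cdots\rangle \ge 1\}$ one has $\rho_b \approx \delta_{q+1}\lambda_q^{2\alpha}|\MMM_q|$ bounded below by a positive constant, so the quotient rule produces only negative powers of a quantity bounded below. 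In both regimes every factor is controlled, using Proposition \ref{MMM_qRRR_qL1}, by $\lambda_q^{55+15N}$ (for $N\le 5$) times lower-order pieces, and multiplying the (finitely many) such factors together and absorbing the fixed constants $\delta_{q+1}, T, |\Lambda_b|$ and the implicit constants by choosing $a$ sufficiently large gives the claimed bound $\lambda_q^{1000}$ — the exponent $1000$ being a deliberately generous overestimate. The main obstacle is precisely this bookkeeping: one must verify that the worst combination of $\tau_q^{-1}\sim\lambda_q^{15}$ factors and $\|\partial_t^M D^N \MMM_q\|_{H^3}\lesssim \lambda_q^{55+15N}$ factors arising in a five-fold spatial and one-fold temporal derivative, together with the division by powers of $\rho_b$ (which requires the lower bound $\rho_b \gtrsim \delta_{q+1}\lambda_q^{-2\alpha}$ and the Faà di Bruno expansion for derivatives of the composition $a_{b,k}(-\MMM_q/\rho_b)$), stays comfortably below $\lambda_q^{1000}$; this is routine but must be done carefully since $a_{b,k}$ is only smooth, not polynomial, so one relies on the boundedness of $a_{b,k}$ and all its derivatives on the fixed ball $B_{\epsilon_b}(0)$.
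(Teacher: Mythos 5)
Your proposal is correct and follows essentially the same route as the paper: the $L^2$ bound via $\|\rho^{1/2}\|_{L^2}\lesssim\|\rho\|_{L^1}^{1/2}$ and Proposition \ref{rho_v,rho_b}, the $L^\infty$ bound via the pointwise estimate on $\chi$ and $\|\MMM_q\|_{L^\infty_tH^3_x}\le\lambda_q^{22}$, and the $H^5$ bound via chain-rule/composition estimates using the lower bound $\rho_b\gtrsim\delta_{q+1}\lambda_q^{-2\alpha}$ and the bounds on $\partial_t^MD^N\MMM_q$ (the paper packages your Faà di Bruno bookkeeping as Moser-type inequalities of the form $\|F(f)\|_{\dot H^5}\lesssim(1+\|f\|_{L^\infty})^4\|f\|_{\dot H^5}$). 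Only cosmetic slips: $\chi\ge\tfrac12$ rather than $\ge1$ on $(1,2)$, and on the large-$\MMM_q$ region $\rho_b\approx\lambda_q^{2\alpha}|\MMM_q|$ without the extra factor of $\delta_{q+1}$; neither affects the argument.
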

\begin{proof}Thanks to Proposition \ref{rho_v,rho_b}, one obtains that
\begin{align*}
\|(a_{(b,k)}, a_{(v,k)})\|_{L^2_{t,x}}\lesssim&\|\rho^{\frac{1}{2}}_b\|_{L^2_{t,x}}+\|\rho^{\frac{1}{2}}_v\|_{L^2_{t,x}}
\lesssim\delta^{\frac{1}{2}}_{q+1}\lambda^{-\frac{\alpha}{2}}_q.
\end{align*}
Taking advantage of Proposition \ref{MMM_qRRR_qL1}, we get that
\begin{align*}
\|\rho_b\|_{L^\infty_{t,x}}\le & \delta_{q+1}\lambda^{-2\alpha}_q\big\|\chi\big\langle\tfrac{\MMM_q}{\delta_{q+1}\lambda^{-4\alpha}_q}\big\rangle\big\|_{L^{\infty}_{t,x}}
\lesssim\delta_{q+1}\lambda^{-2\alpha}_q(1+\tfrac{\|\MMM_q\|_{L^\infty_{t,x}}}{\delta_{q+1}\lambda^{-4\alpha}_q})\lesssim\lambda^{22+2\alpha}_{q},
\end{align*}
which implies $\|a_{(b,k)}\|_{L^\infty_{t,x}}\lesssim\|\rho\|^{\frac{1}{2}}_{L^\infty_{t,x}}\lesssim\lambda^{11+
\alpha}_q$.
Similarly,
\begin{align*}
\|a_{(v,k)}\|_{L^\infty_{t,x}}\le \|\rho^{\frac{1}{2}}_v\|_{L^\infty_{t,x}}
\lesssim\Big(\delta_{q+1}\lambda^{-2\alpha}_q(1+\frac{(\|\RRR_q\|_{L^\infty_{t,x}}+\|a^2_{(b,k)}\|_{L^\infty_{t,x}})}{\delta_{q+1}\lambda^{-3\alpha}_q})\Big)^\frac{1}{2}
\lesssim\lambda^{11+\frac{3\alpha}{2}}_q.
\end{align*}

Next, we turn to estimate $\|(a_{(b,k)}, a_{(v,k)})\|_{L^\infty_tH^5_x}$. Using the definition of $\rho_b$ and Proposition~\ref{MMM_qRRR_qL1}, we have by interpolation that
\begin{equation}\label{rho-1}
\begin{aligned}
\|\rho^{-1}_b\|_{L^\infty_t\dot H^{5}_x}&\lesssim
\delta^{-1}_{q+1}\lambda^{2\alpha}_q(1+\big\|\tfrac{\MMM_q}{\delta_{q+1}\lambda^{-4\alpha}_q}\big\|_{L^\infty_{t,x}})^{4}\big\|\tfrac{\MMM_q}{\delta_{q+1}\lambda^{-4\alpha}_q}\big\|_{L^\infty_t \dot H^{5}_x}\\
&\lesssim \delta^{-2}_{q+1}\lambda^{6\alpha}_q(1+\delta^{-4}_{q+1}\lambda^{16\alpha }_q\big\|\MMM_q\|^{4}_{L^\infty_{t,x}})\|\MMM_q\big\|_{L^\infty_t \dot H^{5}_x}\\
&{\lesssim \delta^{-6}_{q+1}\lambda^{22\alpha+173}_q},
\end{aligned}
\end{equation}
and
\begin{equation}
\begin{aligned}
\big\|\tfrac{\MMM_q}{\rho_b}\big\|_{L^\infty \dot H^{5}_x}\lesssim&\|\MMM_q\|_{L^\infty_{t,x}}\|\rho^{-1}_b\|_{L^\infty_t\dot H^{5}}+\|\MMM_q\|_{L^\infty_t\dot H^{5}_x}\|\rho^{-1}_b\|_{L^\infty_{t,x}}
\lesssim\delta^{-6}_{q+1}\lambda^{22\alpha+195}_q.
\end{aligned}
\end{equation}
Therefore, we get
\begin{align}\label{abkH4}
\big\|a_{b,k}(-\tfrac{\MMM_q}{\rho_b})\big\|_{L^\infty_t \dot H^{5}_x}\lesssim&(1+\big\|\tfrac{\MMM_q}{\rho_b}\big\|_{L^\infty_{t,x}})^4\big\|\tfrac{\MMM_q}{\rho_b}\big\|_{L^\infty_t \dot H^{5}_x}\lesssim\delta^{-10}_{q+1}\lambda^{30\alpha+283}_q.
\end{align}
An easy computation yields  that
\begin{align}\label{rhoH4}
\|\rho^{\frac{1}{2}}_b\|_{L^\infty_t \dot H^{5}_x}&\lesssim
\delta^{\frac{1}{2}}_{q+1}\lambda^{-\alpha}_q(1+\|\tfrac{\MMM_q}{\delta_{q+1}\lambda^{-4\alpha}_q}\|_{L^\infty_{t,x}})^4\|\tfrac{\MMM_q}{\delta_{q+1}\lambda^{-4\alpha}_q}\|_{L^\infty_t \dot H^{5}_x}\lesssim
\delta^{-5}_{q+1}\lambda^{19\alpha+173}_q.
\end{align}
Collecting the above two estimates, we have
\begin{align*}
\|a_{(b,k)}\|_{L^\infty_t \dot H^{5}_x}\lesssim&\|\rho^{\frac{1}{2}}_b\|_{L^\infty_{t,x}}\|a_{b,k}(-\tfrac{\MMM_q}{\rho_b})\|_{L^\infty_t \dot H^{5}_x}+\|\rho^{\frac{1}{2}}_b\|_{L^\infty_t \dot H^{5}_x}\|a_{b,k}(-\tfrac{\MMM_q}{\rho_b})\|_{L^\infty_{t,x}}
\le\lambda^{300}_q.
\end{align*}
In the same way as deriving bound on $\|a_{(b,k)}\|_{L^\infty_t \dot H^{5}_x}$, one obtains  that
\begin{align*}
\|a_{(v,k)}\|_{L^\infty_t \dot H^{5}_x} \le \lambda^{700}_q.
\end{align*}
Finally, we aim to bound $\|(\partial_ta_{(b,k)}, \partial_ta_{(b,k)})\|_{L^\infty_tH^5_x}$. By the definition of $a_{(b,k)}$, we get
\begin{equation}\label{abktH3}
\begin{aligned}
\|\partial_ta_{(b,k)}\|_{L^\infty_t \dot H^{5}_x}\lesssim&\|\partial_t(\eta_q\rho^{\frac{1}{2}}_b)\|_{L^\infty_{t,x}}\|a_{b,k}(-\tfrac{\MMM_q}{\rho_b})\|_{L^\infty_t \dot H^{5}_x}+\|\partial_t(\eta_q\rho^{\frac{1}{2}}_b)\|_{L^\infty_t \dot H^{5}_x}\|a_{b,k}(-\tfrac{\MMM_q}{\rho_b})\|_{L^\infty_{t,x}}\\
&+\|\eta_q\rho^{\frac{1}{2}}_b\|_{L^\infty_{t,x}}\|\partial_t(a_{b,k}(-\tfrac{\MMM_q}{\rho_b}))\|_{L^\infty_t \dot H^{5}_x}+\|\eta_q\rho^{\frac{1}{2}}_b\|_{L^\infty_t \dot H^{5}_x}\|\partial_t(a_{b,k}(-\tfrac{\MMM_q}{\rho_b}))\|_{L^\infty_{t,x}}.
\end{aligned}
\end{equation}
By a similar method as in estimating \eqref{rho-1}--\eqref{abkH4}, we infer by Proposition \ref{MMM_qRRR_qL1} that
\begin{align*}
&\|\partial_t\rho^{\frac{1}{2}}_b\|_{L^\infty_{t,x}}\lesssim \delta^{-\frac{1}{2}}_{q+1}\lambda^{3\alpha}_q\|\partial_t\MMM_q\|_{L^\infty_{t,x}}\le \lambda^{60}_{q},
\quad\|\partial_t\rho^{\frac{1}{2}}_b\|_{L^\infty_t \dot H^5_x}\le \lambda^{260}_{q},\\
&\|\partial_t(a_{b,k}(-\tfrac{\MMM_q}{\rho_b}))\|_{L^\infty_{t,x}}\lesssim\lambda^{90}_q,
\quad\|\partial_t(a_{b,k}(-\tfrac{\MMM_q}{\rho_b}))\|_{L^\infty_t\dot H^5_x}\le\lambda^{450}_q.
\end{align*}
Plugging the above two estimates into \eqref{abktH3}, we obtain that
\begin{align*}
\|\partial_ta_{(b,k)}\|_{L^\infty_t  H^{5}_x}\le\lambda^{520}_q.
\end{align*}
In the same way, a simple computation shows that $\|\partial_ta_{(v,k)}\|_{L^\infty_t  H^{5}_x}\le\lambda^{990}_q.$

\end{proof}
\begin{prop}\label{es-wdL2}
\begin{align*}
&\|(d^{(p)}_{q+1}, w^{(p)}_{q+1})\|_{L^2_{t,x}}\lesssim \delta^{\frac{1}{2}}_{q+1}\lambda^{-\frac{\alpha}{2}}_q,\\
&\|(d^{(c)}_{q+1}, w^{(c)}_{q+1})\|_{L^2_{t,x}}\le \lambda^{C_0}_q \lambda^{\gamma-\frac{1}{2}}_{q+1},\\
&\|(d^{(t)}_{q+1}, w^{(t)}_{q+1})\|_{L^2_{t,x}}\le\lambda^{C_0}_q\lambda^{-\iota}_{q+1}.
\end{align*}
\end{prop}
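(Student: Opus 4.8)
\emph{Setup.} I would estimate the three pieces $(\dpq,\wpq)$, $(\dcq,\wcq)$, $(\dtq,\wtq)$ separately, in each case reducing matters to the amplitude bounds of Proposition~\ref{a{(b,k)}a{(v,k)}} and the building-block bounds of Proposition~\ref{guji1}. The guiding principle is that the amplitudes $a_{(v,k)},a_{(b,k)}$ vary in $x$ and $t$ at frequency at most a fixed power of $\lambda_q$, whereas every building block oscillates at frequency at least $\lambda_{q+1}^{\sigma}$, and $\lambda_{q+1}=\lceil\lambda_q^{b}\rceil$ with $b=\tfrac{4C_0}{\sigma}$ large, so any fixed power of $\lambda_q$ is negligible against $\lambda_{q+1}^{\sigma}$.

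\emph{Principal part.} Since $\Lambda_v,\Lambda_b$ are finite with universally bounded cardinality, by the triangle inequality it suffices to bound a single summand $\big\|a_{(v,k)}\phi_{(\gamma,\frac12,k)}g_{(2,\sigma)}\psi_k\big\|_{L^2_{t,x}}$ (and likewise with $a_{(b,k)}$). The building blocks are $L^2$-normalized: $\|\phi_{(\gamma,\frac12,k)}\|_{L^2_x}\le1$ and $\|g_{(2,\sigma)}\|_{L^2_t}\le1$ by Proposition~\ref{guji1} (with $p=2$, $m=0$), and $\|\psi_k\|_{L^2}=1$ by our normalization. Running the standard almost-orthogonality (improved \Holder) argument successively with $\psi_k$ (spatial frequency $\sim\lambda_{q+1}$), then $\phi_{(\gamma,\frac12,k)}$ (spatial frequency $\sim\lambda_{q+1}^{1/2}$), then $g_{(2,\sigma)}$ (temporal frequency $\sim\lambda_{q+1}^{\sigma}$) as the fast-oscillating factor and the remaining product as the slow amplitude, one obtains
\[
\big\|a_{(v,k)}\phi_{(\gamma,\tfrac12,k)}g_{(2,\sigma)}\psi_k\big\|_{L^2_{t,x}}
\lesssim \|a_{(v,k)}\|_{L^2_{t,x}} + \lambda_{q+1}^{-\sigma/4}\,\|a_{(v,k)}\|_{C^1_{t,x}} .
\]
The error is harmless: $\|a_{(v,k)}\|_{C^1_{t,x}}$ is bounded by a fixed power of $\lambda_q$ via Proposition~\ref{a{(b,k)}a{(v,k)}}, while $\lambda_{q+1}^{\sigma}$ is a large power of $\lambda_q$. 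Summing over $k$ and invoking $\|(a_{(b,k)},a_{(v,k)})\|_{L^2_{t,x}}\lesssim\delta_{q+1}^{1/2}\lambda_q^{-\alpha/2}$ from Proposition~\ref{a{(b,k)}a{(v,k)}} gives $\|(\dpq,\wpq)\|_{L^2_{t,x}}\lesssim\delta_{q+1}^{1/2}\lambda_q^{-\alpha/2}$.

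\emph{Incompressibility correctors.} Differentiating the definition of $\wcq$ and using that $g_{(2,\sigma)}$ depends on $t$ only, each summand equals $\lambda_{q+1}^{-1}\big((\nabla a_{(v,k)})\phi_{(\gamma,\frac12,k)}g_{(2,\sigma)}+a_{(v,k)}(\nabla\phi_{(\gamma,\frac12,k)})g_{(2,\sigma)}\big)\times F_{\bar k}$. Here $\|F_{\bar k}\|_{L^\infty}\lesssim1$ since $F_{\bar k}$ is a fixed profile precomposed with a linear map; combining this with $\|\phi_{(\gamma,\frac12,k)}\|_{L^2_x}\le1$, $\|D\phi_{(\gamma,\frac12,k)}\|_{L^2_x}\le\lambda_{q+1}^{1/2}$ (Proposition~\ref{guji1}), $\|g_{(2,\sigma)}\|_{L^2_t}\le1$, and $\|a_{(v,k)}\|_{L^\infty_{t,x}}\le\lambda_q^{12}$, $\|\nabla a_{(v,k)}\|_{L^\infty_{t,x}}\lesssim\lambda_q^{1000}$ (Proposition~\ref{a{(b,k)}a{(v,k)}}) yields $\|\wcq\|_{L^2_{t,x}}\lesssim\lambda_{q+1}^{-1}\lambda_q^{1000}+\lambda_{q+1}^{-1/2}\lambda_q^{12}$. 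Both terms lie comfortably below $\lambda_q^{C_0}\lambda_{q+1}^{\gamma-1/2}$ (in fact below $\lambda_q^{C_0}\lambda_{q+1}^{-1/2}$), since $\lambda_q^{C_0}$ and $\lambda_{q+1}^{\gamma}$ dominate any fixed power of $\lambda_q$. The corrector $\dcq$ is handled identically.

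\emph{Temporal correctors and main obstacle.} The operators $\PH$ and $\div$ act only in space: $\PH$ is bounded on $L^2_x$ and $\div$ costs one spatial derivative of the slow amplitude $a^2_{(v,k)}$ (resp.\ $a^2_{(b,k)}$), so $\|\wtq\|_{L^2_{t,x}}\lesssim\big(\sum_k\|a^2_{(v,k)}\|_{L^\infty_tH^1_x}+\sum_k\|a^2_{(b,k)}\|_{L^\infty_tH^1_x}\big)\|h_\sigma\|_{L^2_t}$, and each spatial factor is $\lesssim\|a\|_{L^\infty_{t,x}}\|a\|_{L^\infty_tH^1_x}\lesssim\lambda_q^{1012}$ by Proposition~\ref{a{(b,k)}a{(v,k)}}. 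Finally $\|h_\sigma\|_{L^2_t([0,1])}\le\|h_\sigma\|_{L^\infty_t}\lesssim\lambda_{q+1}^{-\sigma}$, because $h_\sigma$ is $\lambda_{q+1}^{-\sigma}$ times a bounded antiderivative of the zero-mean periodic function $\PP g^2_{(2,\sigma)}$ (whose integral over one period has size $\lesssim1$). Recalling $\iota=\sigma$, this gives $\|(\dtq,\wtq)\|_{L^2_{t,x}}\lesssim\lambda_q^{1012}\lambda_{q+1}^{-\sigma}\le\lambda_q^{C_0}\lambda_{q+1}^{-\iota}$ since $1012<C_0=2^{12}$. I expect the only step needing genuine care to be the principal-part estimate: one must confirm that the amplitude frequency is negligible against the \emph{smallest} oscillation scale $\lambda_{q+1}^{\sigma}$ (rather than the faster scales $\lambda_{q+1}^{1/2}$, $\lambda_{q+1}$) so that the improved-\Holder error is absorbed — precisely the role of the parameter hierarchy fixed in \eqref{gamma}--\eqref{10}. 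Everything else is routine matching of powers of $\lambda_q$ against powers of $\lambda_{q+1}$.
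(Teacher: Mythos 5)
Your proposal is correct and follows essentially the same route as the paper: split into principal, incompressibility-corrector, and temporal-corrector parts; decouple the fast oscillations from the slow amplitudes via the improved \Holder inequality (Lemma \ref{Holder}) so that the principal part inherits the $L^2_{t,x}$ smallness of the amplitudes from Proposition \ref{rho_v,rho_b}--\ref{a{(b,k)}a{(v,k)}}; gain $\lambda_{q+1}^{-1}$ (resp. $\lambda_{q+1}^{-1/2}$ after one derivative of $\phi_{(\gamma,\frac12,k)}$) for the correctors; and use $\|h_\sigma\|_{L^\infty_t}\lesssim\lambda_{q+1}^{-\sigma}$ for the temporal part. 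Your error exponent $\lambda_{q+1}^{-\sigma/4}$ in the principal part is slightly more generous than the paper's $\lambda_{q+1}^{-\sigma/2}$, but the parameter hierarchy \eqref{gamma}--\eqref{10} absorbs it just the same, so no gap.
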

\begin{proof}
By Proposition \ref{guji1}, Proposition \ref{a{(b,k)}a{(v,k)}} and Lemma \ref{Holder}, thanks to $\iota<\tfrac{1}{2}$ and condition~\eqref{gamma}, we have
\begin{align*}
&\|d^{(p)}_{q+1}\|_{L^2_{t,x}}+\|w^{(p)}_{q+1}\|_{L^2_{t,x}}\\
\lesssim & \|(a_{(b,k)},a_{(v,k)})\|_{L^2_{t,x}}\|g_{(2,\sigma)}\|_{L^2_t}\|\phi_{(\gamma, \frac{1}{2},k)}\|_{L^2_x}+\lambda^{-\frac{\sigma}{2}}_{q+1}\|(a_{(b,k)},a_{(v,k)})\|_{C^1_{t,x}}\|g_{(2,\sigma)}\|_{L^2_t}\|\phi_{(\gamma, \frac{1}{2},k)}\|_{L^2_x}\\
\lesssim & \delta^{\frac{1}{2}}_{q+1}\lambda^{-\frac{\alpha}{2}}_q+\lambda^{-\frac{\sigma}{2}}_{q+1}\lambda^{500}_q\lesssim \delta^{\frac{1}{2}}_{q+1}\lambda^{-\frac{\alpha}{2}}_q.
\end{align*}
Taking advantage  of Proposition \ref{guji1} and Proposition \ref{a{(b,k)}a{(v,k)}}, one obtains that
\begin{align*}
\|(d^{(c)}_{q+1},w^{(c)}_{q+1})\|_{L^2_{t,x}}&\lesssim \lambda^{-1}_{q+1}\|(a_{(b,k)},a_{(v,k)})\|_{L^\infty_t W^{1,\infty}_x}\|\phi_{(\gamma,\frac{1}{2},k)}\|_{H^1_x}\|g_{(2,\sigma)}\|_{L^2_t}\lesssim\lambda^{1000}_q\lambda^{\gamma-\frac{1}{2}}_{q+1}.
\end{align*}
Similarly, we have
\begin{align*}
\|(d^{(t)}_{q+1}, w^{(t)}_{q+1})\|_{L^2_{t,x}}\lesssim& \|(a^2_{(v,k)},a^2_{(b,k)})\|_{L^2_tH^1_x}
\|h_{\sigma}(t)\|_{L^\infty_t}
\lesssim \lambda^{1012}_q\|h_{\sigma}(t)\|_{L^\infty_t}.
\end{align*}
Note that $\g^2$ is a $1-$ periodic function, we obtain
\begin{align}\label{ginfty}
\big\|h_{\sigma}(t)\big\|_{L^\infty_t}=\lambda^{-\sigma}_{q+1}\big\|\int_{\lfloor\lambda^\sigma_{q+1}t\rfloor}^{\lambda^\sigma_{q+1}t} (\PP \g^2)(s)\dd s\big\|_{L^\infty_t}\le 2\lambda^{-\sigma}_{q+1}\|\g^2\|_{L^1[0,1]}= 2\lambda^{-\sigma}_{q+1}.
\end{align}
In conclusion, we complete the proof of Proposition \ref{es-wdL2} for $C_0=2^{12}$.
\end{proof}
\begin{prop}\label{es-wdLpLinfty}Let $1\le p\le\infty$ and $\e>0$, we have
\begin{align}
&\|(d^{(p)}_{q+1}, d^{(c)}_{q+1}, d^{(t)}_{q+1}, w^{(p)}_{q+1}, w^{(c)}_{q+1}, w^{(t)}_{q+1})\|_{L^p_t L^\infty_x}\le
\lambda^{C_0}_q(\lambda^{1+\frac{\gamma}{2}-\frac{2}{p}}_{q+1}+\lambda^{-\sigma}_{q+1}),\label{dLp}\\
&\|(d^{(p)}_{q+1}, d^{(c)}_{q+1}, d^{(t)}_{q+1},  w^{(p)}_{q+1}, w^{(c)}_{q+1}, w^{(t)}_{q+1})\|_{L^\infty_x H^{3}_x}\lesssim{\lambda^{C_0}_q}\lambda^{4+\frac{\gamma}{2}}_{q+1},\label{dH3}\\
&\|(d_{q+1}, w_{q+1})\|_{L^1_t C^{1-\e}_x}\le\lambda^{C_0}_q(\lambda^{\frac{\gamma}{2}-\e}_{q+1}+\lambda^{-\sigma}_{q+1})\label{dC1}.
\end{align}
\end{prop}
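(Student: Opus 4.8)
The plan is to estimate each of the six building blocks $w^{(p)}_{q+1}, w^{(c)}_{q+1}, w^{(t)}_{q+1}, d^{(p)}_{q+1}, d^{(c)}_{q+1}, d^{(t)}_{q+1}$ separately, using Proposition~\ref{guji1} for the sizes of $\phi_{(\gamma,\frac12,k)}$, $g_{(2,\sigma)}$ and their derivatives, Proposition~\ref{a{(b,k)}a{(v,k)}} for the amplitudes $a_{(v,k)},a_{(b,k)}$ (in particular the $L^\infty_{t,x}$ bound $\lambda^{12}_q$ and the $L^\infty_t H^5_x$ bound $\lambda^{1000}_q$, which via Sobolev embedding $H^3_x\hookrightarrow L^\infty_x$ control $C^2_{t,x}$-type norms), and the bound \eqref{ginfty} on $\|h_\sigma\|_{L^\infty_t}$ already obtained in the proof of Proposition~\ref{es-wdL2}. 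Since $H^{3}(\TTT^3)$ is an algebra and embeds into $C^{1-\e}$ (indeed into $C^{1}$), and since $\Lambda_v\cup\Lambda_b$ is a finite set, everything reduces to products of three scalar factors; I would just track the exponents of $\lambda_{q+1}$ carefully and absorb every power of $\lambda_q$ into the single prefactor $\lambda^{C_0}_q$ with $C_0=2^{12}$, exactly as in Proposition~\ref{es-wdL2}.

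For \eqref{dLp}, the principal parts are the dominant contribution: $\|a_{(v,k)}\phi_{(\gamma,\frac12,k)}g_{(2,\sigma)}\psi_k\|_{L^p_tL^\infty_x}\lesssim \|a_{(v,k)}\|_{L^\infty_{t,x}}\|\phi_{(\gamma,\frac12,k)}\|_{L^\infty_x}\|\psi_k\|_{L^\infty_x}\|g_{(2,\sigma)}\|_{L^p_t}$; by Proposition~\ref{guji1}, $\|\phi_{(\gamma,\frac12,k)}\|_{L^\infty_x}\lesssim \lambda^{\gamma/2}_{q+1}$ (take $m=0$, $p=\infty$ in the first estimate, noting $-\gamma(\tfrac1p-\tfrac12)=\tfrac{\gamma}{2}$ at $p=\infty$), $\|\psi_k\|_{L^\infty_x}\lesssim\lambda_{q+1}$ (since $\psi=\Psi''$ scaled at frequency $\lambda_{q+1}$ and normalized in $L^2$ so $\|\psi_k\|_{L^\infty}\lesssim\lambda_{q+1}^{1/2}$ — actually one should be slightly careful here and use the precise normalization, but in any case it is $\lesssim\lambda_{q+1}$), and $\|g_{(2,\sigma)}\|_{L^p_t}\lesssim\lambda^{-2(\frac1p-\frac12)}_{q+1}=\lambda^{1-\frac2p}_{q+1}$; multiplying gives $\lambda^{C_0}_q\lambda^{1+\gamma/2-2/p}_{q+1}$. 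The incompressibility correctors, being $\lambda^{-1}_{q+1}\nabla(\cdots)\times F$, gain a factor $\lambda^{-1}_{q+1}$ from the prefactor but lose a full $\lambda_{q+1}$ from differentiating $\phi_{(\gamma,\frac12,k)}$ (the $D^1_x$ case of Proposition~\ref{guji1} gives $\lambda^{1/2}_{q+1}$, and $F_{\bar k}$ is at frequency $\lambda_{q+1}$), so they are comparable to or smaller than the principal part. The temporal correctors are $\lesssim \|(a^2_{(v,k)},a^2_{(b,k)})\|_{C^1_x}\,\|h_\sigma\|_{L^\infty_t}\lesssim\lambda^{C_0}_q\lambda^{-\sigma}_{q+1}$ by \eqref{ginfty} (and Sobolev embedding for the $\div$), which is exactly the second term in \eqref{dLp}. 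Estimate \eqref{dH3} is the same computation with $\|\cdot\|_{H^3_x}$ in place of $\|\cdot\|_{L^\infty_x}$ and $L^\infty_t$ in place of $L^p_t$: one uses that $H^3$ is an algebra, $\|\phi_{(\gamma,\frac12,k)}\|_{H^3_x}\lesssim\lambda^{3-\gamma(\frac12)+\gamma/2}_{q+1}$... more precisely $\|D^3_x\phi_{(\gamma,\frac12,k)}\|_{L^2_x}\lesssim\lambda^{3/2}_{q+1}$ by Proposition~\ref{guji1} at $m=3,p=2$, while $\|\psi_k\|_{H^3_x}\lesssim\lambda^{3}_{q+1}\cdot\lambda^{1}_{q+1}$-type — here the dominant term comes from putting all three derivatives on $\psi_k$ (frequency $\lambda_{q+1}$) together with $\phi_{(\gamma,\frac12,k)}$ in $L^\infty$, giving the stated $\lambda^{C_0}_q\lambda^{4+\gamma/2}_{q+1}$. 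Finally \eqref{dC1} follows from \eqref{dH3} and $\|g_{(2,\sigma)}\|_{L^1_t}\lesssim\lambda^{-1}_{q+1}$ combined with the interpolation $\|f\|_{C^{1-\e}_x}\lesssim\|f\|_{L^\infty_x}^{\e/3}\|f\|_{H^3_x}^{1-\e/3}$ (or directly by estimating $\|D^{1-\e}_x(\cdots)\|$), balancing the frequency loss $\lambda^{1-\e}_{q+1}$ from $1-\e$ spatial derivatives against the gain $\lambda^{-2}_{q+1}$ in $\|g_{(2,\sigma)}\|_{L^1_t}$ for the principal part (net $\lambda^{\gamma/2-\e}_{q+1}$... one must check that the $\psi_k$ frequency $\lambda_{q+1}$ raised to $1-\e$ against $\|g\|_{L^1_t}\sim\lambda_{q+1}^{-1}$ gives $\lambda_{q+1}^{-\e}$ times the $\phi$ factor $\lambda_{q+1}^{\gamma/2}$), and against $\lambda^{-\sigma}_{q+1}$ from $h_\sigma$ for the temporal corrector.

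\textbf{Main obstacle.} The routine part is bookkeeping the exponents; the only genuinely delicate point is the $C^{1-\e}$ estimate \eqref{dC1}, because the perturbation contains the high-frequency factor $\psi_k=\psi(\lambda_{q+1}N_\Lambda k\cdot x)$ oscillating at frequency $\lambda_{q+1}$ — much higher than the $\lambda^{1/2}_{q+1}$ frequency of the shear profile $\phi_{(\gamma,\frac12,k)}$ — so naively $1-\e$ spatial derivatives cost $\lambda^{1-\e}_{q+1}$, which would not close. The resolution is that $\psi_k\bar k$ enters only through the combination $w^{(p)}_{q+1}+w^{(c)}_{q+1}=\lambda^{-1}_{q+1}\sum\curl(a_{(v,k)}\phi_{(\gamma,\frac12,k)}g_{(2,\sigma)}F_{\bar k})$ from \eqref{wp+wc}, which carries a compensating $\lambda^{-1}_{q+1}$; combined with the temporal gain $\|g_{(2,\sigma)}\|_{L^1_t}\lesssim\lambda^{-2}_{q+1}$ and the $\phi$-factor $\lambda^{\gamma/2}_{q+1}$, the $\curl$ (one derivative, $\lambda_{q+1}$) plus $1-\e$ derivatives land us at $\lambda^{-1}_{q+1}\cdot\lambda^{1+1-\e}_{q+1}\cdot\lambda^{\gamma/2}_{q+1}\cdot\lambda^{-2}_{q+1}=\lambda^{\gamma/2-\e-1}_{q+1}$, comfortably inside the claimed $\lambda^{\gamma/2-\e}_{q+1}$. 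So the care needed is to estimate $d_{q+1}$ and $w_{q+1}$ through their curl-form \eqref{wp+wc}--\eqref{d^p_q+1+d^c_q+1} rather than through the principal part alone, and to use the $L^1_t$ smallness of $g_{(2,\sigma)}$ and $h_\sigma$ rather than $L^\infty_t$; once that is done, everything else is a finite sum of products of scalar estimates from Propositions~\ref{guji1} and \ref{a{(b,k)}a{(v,k)}}.
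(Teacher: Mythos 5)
Your proposal follows essentially the same route as the paper: estimate each of the six blocks as a product of the factors controlled by Propositions~\ref{guji1} and~\ref{a{(b,k)}a{(v,k)}} (plus \eqref{ginfty} for the temporal correctors), and for the $C^{1-\e}$ bound work with the curl forms \eqref{wp+wc}--\eqref{d^p_q+1+d^c_q+1} so that the $\lambda^{-1}_{q+1}$ prefactor compensates the $\lambda_{q+1}$ frequency of $F_{\bar k}$, then interpolate between $L^1_tC^1_x$ and $L^1_tL^\infty_x$; this is exactly what the paper does, and your identification of the ``main obstacle'' is the right one. Two exponent slips should be fixed, though neither breaks the argument. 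First, $\psi_k=\psi(\lambda_{q+1}N_\Lambda k\cdot x)$ is a mere rescaling of a fixed profile normalized by $\|\psi_k\|_{L^2}=1$, so $\|\psi_k\|_{L^\infty_x}\lesssim 1$, not $\lambda_{q+1}$ (nor $\lambda_{q+1}^{1/2}$); with your stated $\lambda_{q+1}$ the product in \eqref{dLp} would come out as $\lambda^{2+\gamma/2-2/p}_{q+1}$, contradicting the bound you then write down --- the $+1$ in the exponent comes entirely from $\|\g\|_{L^p_t}$. Second, Proposition~\ref{guji1} gives $\|\g\|_{L^1_t}\le\lambda^{-2(1-1/2)}_{q+1}=\lambda^{-1}_{q+1}$, not $\lambda^{-2}_{q+1}$; redoing your bookkeeping for the principal-plus-corrector part with the correct value yields $\lambda^{-1}_{q+1}\cdot\lambda^{2-\e}_{q+1}\cdot\lambda^{\gamma/2}_{q+1}\cdot\lambda^{-1}_{q+1}=\lambda^{\gamma/2-\e}_{q+1}$, i.e.\ the estimate \eqref{dC1} is attained exactly rather than ``comfortably inside'' by a spare factor $\lambda^{-1}_{q+1}$. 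This matches the paper's interpolation $\|\cdot\|^{1-\e}_{L^1_tC^1_x}\|\cdot\|^{\e}_{L^1_tL^\infty_x}$ with $\|\cdot\|_{L^1_tC^1_x}\lesssim\lambda^{C_0}_q\lambda^{\gamma/2}_{q+1}$ and $\|\cdot\|_{L^1_tL^\infty_x}\lesssim\lambda^{C_0}_q\lambda^{\gamma/2-1}_{q+1}$, so the conclusion stands, but the margin you claim is not there.
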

\begin{proof}By the definition of $(d^{(p)}_{q+1}, d^{(c)}_{q+1}, w^{(p)}_{q+1}, w^{(c)}_{q+1})$, we get by Proposition \ref{guji1} and Proposition \ref{a{(b,k)}a{(v,k)}} that
\begin{equation}\label{dpwpLpLinfty}
\begin{aligned}
&\|d^{(p)}_{q+1}\|_{L^p_tL^\infty_x}+\|d^{(c)}_{q+1}\|_{L^p_tL^\infty_x}+ \|w^{(p)}_{q+1}\|_{L^p_tL^\infty_x}+\|w^{(c)}_{q+1}\|_{L^p_tL^\infty_x}\\
\lesssim & \|(a_{(b,k)},a_{(v,k)})\|_{L^\infty_{t,x}}\|\p\|_{L^\infty_x}\|\g\|_{L^p_t}\|\psi_k\|_{L^\infty_x}\\
&+\lambda^{-1}_{q+1}\|(a_{(b,k)},a_{(v,k)})\|_{L^\infty_t W^{1,\infty}_x}\|\p\|_{W^{1,\infty}_x}\|\g\|_{L^p_t}\|(F_{\bar{k}}, F_{\bar{\bar{k}}})\|_{L^\infty_x}\\
\lesssim& \lambda^{12}_{q}\lambda^{1+\frac{\gamma}{2}-\frac{2}{p}}_{q+1}+\lambda^{1000}_q\lambda^{\frac{3}{2}\gamma-\frac{1}{2}+\frac{2}{p}}_{q+1}
\lesssim\lambda^{1000}_q\lambda^{1+\frac{\gamma}{2}-\frac{2}{p}}_{q+1}.
\end{aligned}
\end{equation}
Since $H^2(\TTT^3)\hookrightarrow L^\infty (\TTT^3)$ and Leray projetor $\mathbb{P}_{H}$ is bounded on $L^2\to L^2$, we have by Proposition~\ref{a{(b,k)}a{(v,k)}} and  \eqref{ginfty} that
\begin{align}\label{dpwpt}
&\|(d^{(t)}_{q+1}, w^{(t)}_{q+1})\|_{L^p_tL^\infty_x}
\lesssim \lambda^{-\sigma}_{q+1}\|(a^2_{(b,k)}, a^2_{(v,k)})\|_{L^\infty_tH^3_x}\lesssim \lambda^{1012}_q\lambda^{-\sigma}_{q+1}.
\end{align}
Collecting the above two inequalities, for $C_0=2^{12}$, we deduce \eqref{dLp}.

In the same way as estimating \eqref{dpwpLpLinfty}, one gets
by Proposition \ref{a{(b,k)}a{(v,k)}} that
\begin{align*}
&\|(d^{(p)}_{q+1}, d^{(c)}_{q+1}, d^{(t)}_{q+1},  w^{(p)}_{q+1}, w^{(c)}_{q+1}, w^{(t)}_{q+1})\|_{L^\infty_tH^3_x}\\
\lesssim & \|(a_{(b,k)},a_{(v,k)})\|_{L^\infty_t H^3_x}\|\g\|_{L^\infty_t}(\|\psi_k\|_{L^\infty_{t,x}}\|\p\|_{H^3_x}+\|\psi_k\|_{H^3_x}\|\p\|_{L^\infty_{t,x}})\\
&+\lambda^{-1}_{q+1} \|(a_{(b,k)},a_{(v,k)})\|_{L^\infty_t H^4_x}\|\g\|_{L^\infty_t}\|\p\|_{L^\infty_x}\|(F_{\bar{k}}, F_{\bar{\bar{k}}})\|_{W^{4,\infty}_x}\\
&+\lambda^{-1}_{q+1} \|(a_{(b,k)},a_{(v,k)})\|_{L^\infty_t H^4_x}\|\g\|_{L^\infty_t}\|\p\|_{W^{4,\infty}_x}\|(F_{\bar{k}}, F_{\bar{\bar{k}}})\|_{L^\infty_x}\\
&+\lambda^{-\sigma}_{q+1}\|(a^2_{(b,k)}, a^2_{(v,k)})\|_{L^\infty_t H^4_x}\\
\lesssim& {\lambda^{C_0}_q}\lambda^{4+\frac{\gamma}{2}}_{q+1},
\end{align*}
which implies \eqref{dH3}. One gets by \eqref{wp+wc}, \eqref{d^p_q+1+d^c_q+1} Proposition~\ref{guji1} and Proposition \ref{a{(b,k)}a{(v,k)}} that
\begin{align*}
&\|d^{(p)}_{q+1}+d^{(c)}_{q+1}\|_{L^1_t C^1_x}+\|w^{(p)}_{q+1}+w^{(c)}_{q+1}\|_{L^1_t C^1_x}\\
\lesssim & \lambda^{-1}_{q+1}\|(a_{(b,k)},a_{(v,k)})\|_{L^\infty_t W^{2,\infty}_x}\|\p\|_{W^{2,\infty}_x}\|\g\|_{L^1_t}\|(F_{\bar{k}}, F_{\bar{\bar{k}}})\|_{L^\infty_x}\\
&+\lambda^{-1}_{q+1}\|(a_{(b,k)},a_{(v,k)})\|_{L^\infty_{t ,x}}\|\p\|_{L^\infty_x}\|\g\|_{L^1_t}\|(F_{\bar{k}}, F_{\bar{\bar{k}}})\|_{W^{2,\infty}_x}\\
\lesssim& \lambda^{1000}_{q}\lambda^{\frac{\gamma}{2}-1}_{q+1}+\lambda^{12}_{q}\lambda^{\frac{\gamma}{2}}_{q+1}
\le\lambda^{1000}_q\lambda^{\frac{\gamma}{2}}_{q+1}.
\end{align*}
In a similar way as  in the proof of \eqref{dpwpt}, we have
\begin{align*}
\|(d^{(t)}_{q+1}, w^{(t)}_{q+1})\|_{L^1_t C^1_x}\lesssim &\lambda^{-\sigma}_{q+1}\|(a^2_{(b,k)}, a^2_{(v,k)})\|_{L^\infty_t H^3_x}\le\lambda^{1012}_q\lambda^{-\iota}_{q+1}.
\end{align*}
Using interpolation, we obtain
\begin{align*}
\|(d_{q+1}, w_{q+1})\|_{L^1_t C^{1-\e}_x}\lesssim &\|(d^{(p)}_{q+1}+d^{(c)}_{q+1}, w^{(p)}_{q+1}+w^{(c)}_{q+1})\|^{1-\e}_{L^1_t C^{1}_x}\|(d^{(p)}_{q+1}+d^{(c)}_{q+1}, w^{(p)}_{q+1}+w^{(c)}_{q+1})\|^{\e}_{L^1_t L^\infty_x}\\
&+\|(d^{(t)}_{q+1}, w^{(t)}_{q+1})\|^{1-\e}_{L^1_t C^{1}_x}\|(d^{(t)}_{q+1}, w^{(t)}_{q+1})\|^{\e}_{L^1_t L^\infty_x}\\
\le&\lambda^{C_0}_q\lambda^{\frac{\gamma}{2}-\e}_{q+1}+\lambda^{C_0}_q\lambda^{-\sigma}_{q+1}.
\end{align*}
Therefore, we complete the proof of Proposition \ref{es-wdLpLinfty}.
\end{proof}
\section{Convex integration step: estimates for Reynolds and magnetic stresses }
Let $(v_{q+1}, b_{q+1})=(\vv_q+w_{q+1}, \bb_{q}+d_{q+1})$. One verifies that $(v_{q+1}, b_{q+1}, p_{q+1}, \RR_{q+1}, \MM_{q+1})$ satisfies \eqref{e:subsol-euler} with replacing $q$ by $q+1$, where
\begin{align*}
\RR_{q+1}
=&\underbrace{\mathcal{R}[\big(\del_t w_{q+1}-\Delta w_{q+1}\big)+\div(w_{q+1}\otimes \vv_q+\vv_q\otimes w_{q+1}-\bb_q\otimes d_{q+1}-d_{q+1}\otimes \bb_q)]}_{\Rlin}\\
&+\underbrace{\mathcal{R}[\div\big(\RRR_q+(w_{q+1}\otimes w_{q+1})-(d_{q+1}\otimes d_{q+1})\big)-\nabla p_v]}_{\Rosc},\\
\MM_{q+1}
=&\underbrace{\mathcal{R}_a[\big(\del_t w_{q+1}-\Delta w_{q+1}\big)+\div(w_{q+1}\otimes \bb_q+\vv_q\otimes d_{q+1}
-\bb_q\otimes w_{q+1}-d_{q+1}\otimes \vv_q)]}_{\Mlin}\\
&+\underbrace{\mathcal{R}_a[\div\big(\MMM_q+w_{q+1}\otimes d_{q+1}-d_{q+1}\otimes w_{q+1}\big)]}_{\Mosc},
\end{align*}
and $p_{q+1}=\pp_{q}-p_v,$
where
\begin{equation}\label{pv}
\begin{aligned}
p_v=&\eta^2_q\rho_v-\sum_{k\in \Lambda_v}\frac{\div\div}{\Delta}a^2_{(v,k)}(\bar{k}\otimes\bar{k})\PP(\g^2)\\
&-\sum_{k\in \Lambda_b}\frac{\div\div}{\Delta}a^2_{(b,k)}(\bar{k}\otimes\bar{k}-\bar{\bar{k}}\otimes\bar{\bar{k}})\PP(\g^2).
\end{aligned}
\end{equation}
\subsection{Estimates for the magnetic stress}
\begin{prop}\label{Mlin}
\begin{align}
&\|\Mlin-\mathcal{R}_a\del_t d^{(t)}_{q+1}\|_{L^1_{t,x}}\lesssim \lambda^{2C_0}_q(\lambda^{\sigma-\frac{\gamma}{4}}_{q+1}+\lambda^{-\frac{\gamma}{6}}_{q+1}+\lambda^{-\sigma}_{q+1}),\label{MlinL1}\\
&{\|\Mlin-\mathcal{R}_a\del_t d^{(t)}_{q+1}\|_{L^\infty_t H^3_x}\lesssim \lambda^{C_0}_{q}\lambda^{5+\sigma+\frac{\gamma}{2}}_{q+1}}\label{MlinH3}.
\end{align}
\end{prop}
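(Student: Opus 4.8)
The plan is to decompose $\Mlin$ into its constituent pieces and bound each using the $\mathcal R_a$-estimates for the antidivergence operator (which gains one derivative) together with the perturbation estimates in Proposition~\ref{es-wdLpLinfty} and the amplitude estimates in Proposition~\ref{a{(b,k)}a{(v,k)}}. Write $\Mlin = \mathcal R_a(\del_t w_{q+1}) - \mathcal R_a(\Delta w_{q+1}) + \mathcal R_a \div(w_{q+1}\otimes \bb_q + \vv_q\otimes d_{q+1} - \bb_q\otimes w_{q+1} - d_{q+1}\otimes \vv_q)$. The key algebraic point is that the time derivative splits as $\del_t w_{q+1} = \del_t(w^{(p)}_{q+1}+w^{(c)}_{q+1}) + \del_t w^{(t)}_{q+1}$, and the temporal corrector $w^{(t)}_{q+1}$ was designed precisely so that $\del_t w^{(t)}_{q+1}$ cancels the high-temporal-frequency part of $\div(w^{(p)}_{q+1}\otimes w^{(p)}_{q+1})$ inside $\Mosc$; here in $\Mlin$ the analogous term $\mathcal R_a \del_t d^{(t)}_{q+1}$ is simply subtracted off on the left-hand side, so we never need to estimate it. What remains is $\mathcal R_a \del_t(w^{(p)}_{q+1}+w^{(c)}_{q+1})$, the Laplacian term, and the four transport-type terms.

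For the principal time-derivative term, I would use the representation \eqref{wp+wc}: since $w^{(p)}_{q+1}+w^{(c)}_{q+1}= \lambda_{q+1}^{-1}\sum \curl(a_{(v,k)}\phi_{(\gamma,1/2,k)}\g F_{\bar k})$, the time derivative hits either an amplitude $a_{(v,k)}$ (costing $\lambda_q^{C_0}$, harmless) or the temporal profile $\g$, which by Proposition~\ref{guji1} costs $\lambda_{q+1}^{\sigma}$ in the exponent while $\mathcal R_a\curl$ is order zero; the spatial concentration of $\phi_{(\gamma,1/2,k)}$ then supplies the decaying factor. Balancing gives the $\lambda_{q+1}^{\sigma-\gamma/4}$ term in \eqref{MlinL1} after applying $L^1$-\Holder in both space (support $\lesssim\lambda_{q+1}^{-\gamma}$) and the $L^1_t$ norm of $\g$. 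For the Laplacian term $\mathcal R_a\Delta w_{q+1}$, note $\mathcal R_a\Delta$ is order one, so two spatial derivatives land on $w_{q+1}$; the worst contribution is $\lambda_{q+1}^{1/2}$ from each derivative on $\phi_{(\gamma,1/2,k)}$, but the $L^1_x$-norm of $|\del^2\phi_{(\gamma,1/2,k)}|$ is $\lesssim \lambda_{q+1}^{1-\gamma/2}$, and combined with $\|\g\|_{L^1_t}\lesssim\lambda_{q+1}^{-1}$ this yields $\lambda_{q+1}^{-\gamma/2}$, absorbed into $\lambda_{q+1}^{-\gamma/6}$. For the transport terms, I would use that $\vv_q,\bb_q$ are controlled in $L^\infty_t H^3_x \hookrightarrow L^\infty_t W^{1,\infty}_x$ by \eqref{e:vv_q-Hs}, while $w_{q+1},d_{q+1}$ carry an $L^1_t L^\infty_x$ (or $L^2_{t,x}$) smallness from Proposition~\ref{es-wdLpLinfty}/\ref{es-wdL2}; since $\mathcal R_a\div$ is order zero, these terms contribute $\lambda_q^{C_0}\lambda_{q+1}^{1+\gamma/2-2/p}$-type factors which, after choosing $p$ close to $2$ and using \eqref{gamma}, are dominated by $\lambda_{q+1}^{-\gamma/6}$.

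For the $L^\infty_t H^3_x$ bound \eqref{MlinH3} I would repeat the same decomposition but now keep three extra spatial derivatives. Here $\mathcal R_a$ is bounded on $H^3$-type spaces (it is a Calderón–Zygmund-type operator gaining a derivative), so $\mathcal R_a\del_t(w^{(p)}+w^{(c)})$ costs the $H^4$-norm of the building blocks, which by Propositions~\ref{guji1} and \ref{a{(b,k)}a{(v,k)}} is at most $\lambda_q^{C_0}\lambda_{q+1}^{\sigma}\lambda_{q+1}^{4+\gamma/2-1+1/2}$-ish; tracking exponents carefully and using that $\mathcal R_a\Delta$ contributes $\lambda_{q+1}^{4+\gamma/2}$ from \eqref{dH3}, everything collapses to $\lambda_q^{C_0}\lambda_{q+1}^{5+\sigma+\gamma/2}$. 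The main obstacle, and the only place requiring genuine care, is the $\mathcal R_a\del_t(w^{(p)}_{q+1}+w^{(c)}_{q+1})$ term: one must route the time derivative onto $\g$ rather than the amplitudes to see the gain, and simultaneously exploit the $\curl$-structure so that the net spatial order of $\mathcal R_a\curl$ is zero — otherwise the $\lambda_{q+1}^{\sigma}$ loss is not compensated. Once that term is handled, the rest is a routine application of \Holder's inequality in space-time against the established estimates.
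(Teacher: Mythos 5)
Your overall strategy coincides with the paper's: split $\Mlin-\mathcal R_a\del_t d^{(t)}_{q+1}$ into the principal time-derivative piece (handled through the $\curl$ representation \eqref{wp+wc}/\eqref{d^p_q+1+d^c_q+1} so that $\mathcal R_a\curl$ is an order-zero Calder\'on--Zygmund operator, applied in $L^{3/2}_x$ and then embedded into $L^1_x$), the heat term (one net derivative after the gain from $\mathcal R_a$), and the four transport terms (\Holder\ against $\|\vv_q,\bb_q\|_{L^\infty_{t,x}}$), with the $H^3$ bound obtained by rerunning the same decomposition with product estimates. That is exactly the paper's proof. However, two of your steps do not close as written. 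First, your bookkeeping of the cancellation is inconsistent: starting from $\mathcal R_a\del_t w_{q+1}$ and subtracting $\mathcal R_a\del_t d^{(t)}_{q+1}$ leaves $\mathcal R_a\del_t(w^{(p)}_{q+1}+w^{(c)}_{q+1}+w^{(t)}_{q+1}-d^{(t)}_{q+1})$, not $\mathcal R_a\del_t(w^{(p)}_{q+1}+w^{(c)}_{q+1})$, and $\del_t w^{(t)}_{q+1}$ is \emph{not} negligible (it is exactly the term reserved to cancel oscillations in $\Rosc$). The resolution is that the displayed definition of $\Mlin$ contains a typo: the linear terms of the magnetic equation are $\del_t d_{q+1}-\Delta d_{q+1}$, so the correct remainder is $\mathcal R_a\bigl(\del_t(d^{(p)}_{q+1}+d^{(c)}_{q+1})-\Delta d_{q+1}\bigr)$; since $d^{(p)}+d^{(c)}$ has the same $\lambda_{q+1}^{-1}\curl$ structure, your subsequent estimates transfer verbatim, but you must fix which field carries the heat and time-derivative terms before the algebra is valid.

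Second, your treatment of the transport terms would fail as stated: to bound $\|\mathcal R_a\div(w_{q+1}\otimes\bb_q-\cdots)\|_{L^1_{t,x}}$ you need $\|w_{q+1}\|_{L^1_tL^\infty_x}$, i.e.\ the exponent $p=1$ in \eqref{dLp}, which gives $\lambda_q^{C_0}(\lambda_{q+1}^{\gamma/2-1}+\lambda_{q+1}^{-\sigma})$; choosing ``$p$ close to $2$'' produces $\lambda_{q+1}^{1+\gamma/2-2/p}\to\lambda_{q+1}^{\gamma/2}\gg1$, which is large, not small. (Also note these terms are controlled by $\lambda_{q+1}^{-\sigma}$, not by $\lambda_{q+1}^{-\gamma/6}$, since $\sigma=\gamma/16<\gamma/6$; this is why $\lambda_{q+1}^{-\sigma}$ appears on the right of \eqref{MlinL1}.) A smaller point: in the Laplacian term the derivatives can land on $\psi_k$, which oscillates at frequency $\lambda_{q+1}$ and dominates the $\lambda_{q+1}^{1/2}$ cost of differentiating $\phi_{(\gamma,\frac12,k)}$; the correct accounting is $\lambda_{q+1}^{-1}$ (from the $\curl$ potential) times one surviving derivative at frequency $\lambda_{q+1}$ times $\|\phi\|_{L^{3/2}_x}\|\g\|_{L^1_t}\sim\lambda_{q+1}^{-\gamma/6-1}$, yielding $\lambda_{q+1}^{-\gamma/6}$ rather than your $\lambda_{q+1}^{-\gamma/2}$. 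None of these change the architecture of the proof, but the first two must be repaired for the argument to be correct.
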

\begin{proof}By the definition of $\Mlin$, one has
\begin{align*}
\Mlin-\mathcal{R}_a\del_t d^{(t)}_{q+1}
=&\mathcal{R}_a\big(\del_t d^{(p)}_{q+1}+\del_t d^{(c)}_{q+1}-\Delta d_{q+1}\big)\\
&+\mathcal{R}_a(\div(w_{q+1}\otimes \bb_q-\bb_q\otimes w_{q+1}+\vv_q\otimes d_{q+1}-d_{q+1}\otimes \vv_q)).
\end{align*}
Since Calderon-Zygmund operators are bounded on $L^p\to L^p$, $\forall 1<p<\infty$, we obtain that
\begin{equation}\label{es-Mlin-1}
\begin{aligned}
\|\mathcal{R}_a\del_t (d^{(p)}_{q+1}+d^{(c)}_{q+1})\|_{L^1_{t,x}}
\lesssim&\lambda^{-1}_{q+1}\|\del_t(a_{(b,k)}\p\g) F_{\bar{\bar{k}}})\|_{L^1_tL^{\frac{3}{2}}_x}\\
\lesssim&\lambda^{-1}_{q+1}\|a_{(b,k)}\|_{C^1_{t,x}}\|\p\|_{L^{\frac{3}{2}}_x}\|\g\|_{W^{1,1}_t}\|F_{\bar{\bar{k}}}\|_{L^\infty_x}\\
\le&\lambda^{C_0}_q \lambda^{\sigma-\frac{\gamma}{4}}_{q+1},
\end{aligned}
\end{equation}
and
\begin{equation}\label{es-Mlin-2}
\begin{aligned}
\|\mathcal{R}_a\Delta d_{q+1}\|_{L^1_{t,x}}
\lesssim&\|\nabla (d ^{(p)}_{q+1}+d^{(c)}_{q+1})\|_{L^1_tL^{\frac{3}{2}}_x}+\|\nabla d^{(t)}_{q+1}\|_{L^1_tL^{\frac{3}{2}}_x}\\
\lesssim&\lambda^{-1}_{q+1}\|a_{(b,k)}\|_{L^\infty_t W^{2,\infty}_x}\|\g\|_{L^1_t}\|\p\|_{L^\frac{3}{2}_x}\|F_{\bar{\bar{k}}}\|_{W^{2,\infty}_x}\\
&+\lambda^{-1}_{q+1}\|a_{(b,k)}\|_{L^\infty _t W^{2,\infty}_x}\|\g\|_{L^1_t}\|\p\|_{W^{2,\frac{3}{2}}_x}\|F_{\bar{\bar{k}}}\|_{L^\infty_x}\\
&+\lambda^{-\sigma}_{q+1}\|a^2_{(b,k)}\|_{L^\infty_tW^{2,\infty}_x}\\
\lesssim&\lambda^{C_0}_{q}\lambda^{-\frac{\gamma}{6}}_{q+1}+\lambda^{C_0}_{q}\lambda^{-\sigma}_{q+1}.
\end{aligned}
\end{equation}
Using Proposition \ref{estimate-vvq} and Proposition \ref{es-wdLpLinfty}, one deduces that
\begin{equation}\label{es-Mlin-3}
\begin{aligned}
&\|\mathcal{R}_a\div(w_{q+1}\otimes \bb_q-\bb_q\otimes w_{q+1}+\vv_q\otimes d_{q+1}-d_{q+1}\otimes \vv_q) \|_{L^1_{t,x}}\\
\lesssim&\|w_{q+1}\otimes \bb_q-\bb_q\otimes w_{q+1}+\vv_q\otimes d_{q+1}-d_{q+1}\otimes \vv_q \|_{L^1_t L^{\frac{3}{2}}_x}\\
\lesssim&\|w_{q+1}\|_{L^1_tL^\infty_x}\|\bb_q\|_{L^\infty_{t,x}}+\|d_{q+1}\|_{L^1_tL^\infty_x}\|\vv_q\|_{L^\infty_{t,x}}\lesssim \lambda^{2C_0}_q\lambda^{\frac{\gamma}{2}-1}_{q+1}+\lambda^{2C_0}_q\lambda^{{-\sigma}}_{q+1}.
\end{aligned}
\end{equation}
Collecting \eqref{es-Mlin-1}--\eqref{es-Mlin-3} shows estimate \eqref{MlinL1}. Now we turn to estimate $\|\Mlin-\mathcal{R}_a\del_t d^{(t)}_{q+1}\|_{L^\infty_t H^3_x}$. Thanks to the equality \eqref{d^p_q+1+d^c_q+1}, we have by Proposition~\ref{guji1} and Proposition~\ref{a{(b,k)}a{(v,k)}} that
\begin{equation}\label{es-Mlin-H3}
\begin{aligned}
&\|\mathcal{R}_a\del_t (d^{(p)}_{q+1}+d^{(c)}_{q+1})\|_{L^\infty_t H^3_x}\\
\lesssim&\lambda^{-1}_{q+1}(\|\del_t(a_{(b,k)\g}\p) \|_{L^\infty_{t,x}}\|F_{\bar{\bar{k}}}\|_{H^3_x}+\|\del_t(a_{(b,k)\g}\p)  \|_{L^\infty_t H^3_x}\|F_{\bar{\bar{k}}}\|_{L^\infty_x})\\
\lesssim&\lambda^{-1}_{q+1}\|\g\|_{C^1_t}(\lambda^3_{q+1}\|a_{(b,k)}\|_{C^1_{t,x}}\|\p\|_{L^\infty_x}+\|(\partial_ta_{(b,k)},a_{(b,k)})\|_{L^\infty_t H^3_x}\|\p\|_{H^3_x})\\
\lesssim&\lambda^{C_0}_{q}\lambda^{5+\sigma+\frac{\gamma}{2}}_{q+1}.
\end{aligned}
\end{equation}
Similarly, we can deduce that
\begin{equation}\label{es-Mlin-H32}
\begin{aligned}
\|\mathcal{R}_a\Delta d_{q+1}\|_{L^\infty_t H^3_x}
\lesssim&\|(d ^{(p)}_{q+1}+d^{(c)}_{q+1})\|_{L^\infty_t H^4_x}+\| d^{(t)}_{q+1}\|_{L^\infty_t H^4_x}\\
\lesssim&\lambda^{-1}_{q+1}\|a_{(b,k)}\|_{L^\infty_t H^4_x}\|\p\|_{H^4_x}\|\g\|_{L^\infty_t}\|F_{\bar{k}}\|_{L^\infty}\\
&+\lambda^{-1}_{q+1}\|a_{(b,k)}\|_{L^\infty_{t,x}}\|\p\|_{L^\infty_x}\|\g\|_{L^\infty_t}\|F_{\bar{k}}\|_{H^4_x}
+\lambda^{-\sigma}_{q+1}\|a^2_{(b,k)}\|_{L^\infty_tH^5_x}\\
\le&\lambda^{C_0}_q\lambda^{4+\frac{\gamma}{2}}_{q+1}.
\end{aligned}
\end{equation}
Combining Proposition \ref{estimate-vvq} with Proposition \ref{es-wdLpLinfty}, we get
\begin{equation}\label{dlinH3}
\begin{aligned}
&{\|\mathcal{R}_a\div(w_{q+1}\otimes \bb_q-\bb_q\otimes w_{q+1}+\vv_q\otimes d_{q+1}-d_{q+1}\otimes \vv_q) \|_{L^\infty_t H^3_x}}\\
\lesssim&\|(w_{q+1},d_{q+1})\|_{L^\infty_t H^3_x}\|(\bb_q,\vv_q)\|_{L^\infty_{t,x}}+\|(w_{q+1},d_{q+1})\|_{L^\infty_{t,x}}\|(\bb_q,\vv_q)\|_{L^\infty_t H^3_x}
\le\lambda^{2C_0}_q\lambda^{4+\frac{\gamma}{2}}_{q+1}.
\end{aligned}
\end{equation}
The above inequality combined with \eqref{es-Mlin-H3} and \eqref{es-Mlin-H32} implies \eqref{MlinH3}.
\end{proof}
\begin{prop}
\begin{align}
&\|\Mosc+\mathcal{R}_a \partial_t d^{(t)}_{q+1}\|_{L^1_{t,x}}\lesssim \lambda^{C_0}_q(\lambda^{-\frac{\gamma}{6}}_{q+1}+\lambda^{\gamma-\frac{1}{2}}_{q+1}+\lambda^{-\sigma}_{q+1}),\label{MoscL1}\\
&\|\Mosc+\mathcal{R}_a \partial_t d^{(t)}_{q+1}\|_{L^\infty_t H^3_x}\lesssim \lambda^{2C_0}_{q}\lambda^{5+\gamma}_{q+1}.\label{MoscH3}
\end{align}
\end{prop}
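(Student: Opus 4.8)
The plan is to first carry out the resonance cancellation for which the temporal corrector $\dtq$ was designed, rewriting $\Mosc+\mathcal{R}_a\partial_t\dtq$ as $\mathcal{R}_a\div$ of a genuine error, and only then to estimate that error. For the cancellation: by \eqref{gamma}, Proposition~\ref{MMM_qRRR_qL1} and the property $z\le 2\chi(z)$ of $\chi$, the matrix $-\MMM_q/\rho_b$ lies in the ball $B_{\epsilon_b}(0)$ on which the First Geometric Lemma~\ref{first L} applies, so with the definition \eqref{a(v,k)a(b,k)} of $a_{(b,k)}$ one obtains the pointwise identity
\[
\sum_{k\in\Lambda_b}a_{(b,k)}^2\,(\bar k\otimes\bar{\bar k}-\bar{\bar k}\otimes\bar k)=\eta_q^2\rho_b\Big(-\tfrac{\MMM_q}{\rho_b}\Big)=-\eta_q^2\MMM_q=-\MMM_q,
\]
the last step because $\eta_q\equiv 1$ on $\mathrm{supp}\,\MMM_q=\bigcup_i I_i$. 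Since $\partial_t h_\sigma=\g^2-1$ (as $\int_0^1\g^2\,dt=1$ by \eqref{ginfty}), the definition \eqref{wt} of $\dtq$ then gives $\partial_t\dtq=\div\big(\MMM_q(\g^2-1)\big)$, hence
\[
\Mosc+\mathcal{R}_a\partial_t\dtq=\mathcal{R}_a\div\big(\MMM_q\g^2+w_{q+1}\otimes d_{q+1}-d_{q+1}\otimes w_{q+1}\big).
\]

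Next I would expand $w_{q+1}=\wpq+\wcq+\wtq$ and $d_{q+1}=\dpq+\dcq+\dtq$ and isolate the principal interaction $\wpq\otimes\dpq-\dpq\otimes\wpq$. Because $\Lambda_v\cap\Lambda_b=\emptyset$, only the diagonal terms $k=k'\in\Lambda_b$ are resonant; all remaining principal terms carry a product $\p\,\pd$ supported on a set of measure $\lesssim\lambda_{q+1}^{-2\gamma}$ (Proposition~\ref{guji1}). Using the normalizations $\|\p\|_{L^2_x}=\|\psi_k\|_{L^2_x}=1$ to split $\p^2\psi_k^2=1+(\p^2\psi_k^2-1)$ with mean-zero remainder, the identity above shows that the fully averaged diagonal part of $\wpq\otimes\dpq-\dpq\otimes\wpq$ equals exactly $-\g^2\MMM_q$, which cancels the term $\MMM_q\g^2$. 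Therefore
\[
\Mosc+\mathcal{R}_a\partial_t\dtq=\mathcal{R}_a\div\big(O_{\mathrm{osc}}+O_{\mathrm{off}}+O_{\mathrm{corr}}\big),
\]
where $O_{\mathrm{osc}}=\g^2\sum_{k\in\Lambda_b}a_{(b,k)}^2(\p^2\psi_k^2-1)(\bar k\otimes\bar{\bar k}-\bar{\bar k}\otimes\bar k)$ has zero $x$-mean, $O_{\mathrm{off}}$ collects the off-diagonal principal terms, and $O_{\mathrm{corr}}$ collects every interaction containing at least one corrector.

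For \eqref{MoscL1} the three pieces are treated separately. The key point for $O_{\mathrm{osc}}$ is that $\bar k\otimes\bar{\bar k}-\bar{\bar k}\otimes\bar k$ is annihilated by $k$ in each index, so the derivative in $\div$ that would hit the fast factor $\p^2\psi_k^2$ contributes nothing; thus $\div O_{\mathrm{osc}}$ is a product of a slowly varying amplitude ($\g^2$ times derivatives of $a_{(b,k)}^2$, estimated by Proposition~\ref{a{(b,k)}a{(v,k)}}) with the mean-zero oscillation $\p^2\psi_k^2-1$ of frequency $\gtrsim\lambda_{q+1}^{1/2}$. Applying $\mathcal{R}_a$ and integrating by parts repeatedly against that oscillation (each amplitude derivative costing a fixed power $\lambda_q^{C_0}\ll\lambda_{q+1}^{1/2}$, by \eqref{10}, \eqref{gamma}), together with $\|\g^2\|_{L^1_t}=1$ (Proposition~\ref{guji1}), the $L^{3/2}$--boundedness of $\mathcal{R}_a$ established in Section~\ref{inversedivergence} and $L^{3/2}(\TTT^3)\hookrightarrow L^1(\TTT^3)$, bounds $\|\mathcal{R}_a\div O_{\mathrm{osc}}\|_{L^1_{t,x}}$ by $\lambda_q^{C_0}\lambda_{q+1}^{-\gamma/6}$ (the exponent $-\gamma/6$ reflecting the $L^{3/2}_x\hookrightarrow L^1_x$ loss on the intermittent profiles, cf.~Proposition~\ref{guji1}). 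For $O_{\mathrm{off}}$ one uses the support bound above, the pointwise size $\|\p\|_{L^\infty_x}\lesssim\lambda_{q+1}^{\gamma/2}$, interpolation of $L^{3/2}_x$ between $L^1_x$ and $L^\infty_x$, and $\mathcal{R}_a\div$ on $L^{3/2}$, again landing below $\lambda_q^{C_0}\lambda_{q+1}^{-\gamma/6}$. Finally $O_{\mathrm{corr}}$ is controlled by the improved H\"older inequality (Lemma~\ref{Holder}) together with Propositions~\ref{es-wdL2} and \ref{es-wdLpLinfty}: the incompressibility correctors $\wcq,\dcq$ contribute $\lambda_{q+1}^{\gamma-\frac12}$, and the temporal correctors $\wtq,\dtq$ contribute $\lambda_{q+1}^{-\sigma}$ via $\|h_\sigma\|_{L^\infty_t}\lesssim\lambda_{q+1}^{-\sigma}$ (see \eqref{ginfty}). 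Summing the three pieces gives \eqref{MoscL1}.

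For \eqref{MoscH3} a crude bound suffices. Since $\mathcal{R}_a\div$ is bounded on $H^3_x$ we get $\|\Mosc\|_{L^\infty_tH^3_x}\lesssim\|\MMM_q\|_{L^\infty_tH^3_x}+\|w_{q+1}\otimes d_{q+1}\|_{L^\infty_tH^3_x}$, and by the product rule together with Propositions~\ref{MMM_qRRR_qL1} and \ref{es-wdLpLinfty} (which give $\|(w_{q+1},d_{q+1})\|_{L^\infty_tH^3_x}\lesssim\lambda_q^{C_0}\lambda_{q+1}^{4+\frac{\gamma}{2}}$ and $\|(w_{q+1},d_{q+1})\|_{L^\infty_{t,x}}\lesssim\lambda_q^{C_0}\lambda_{q+1}^{1+\frac{\gamma}{2}}$) this is $\lesssim\lambda_q^{2C_0}\lambda_{q+1}^{5+\gamma}$; while $\|\mathcal{R}_a\partial_t\dtq\|_{L^\infty_tH^3_x}\lesssim\|a_{(b,k)}^2\|_{L^\infty_tH^3_x}\|\g^2-1\|_{L^\infty_t}$ is of the same order, by Proposition~\ref{a{(b,k)}a{(v,k)}} and $\|\g\|_{L^\infty_t}\lesssim\lambda_{q+1}$. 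The main obstacle is the estimate of $O_{\mathrm{osc}}$ and $O_{\mathrm{off}}$: because $\mathcal{R}_a\div$ is itself of order $0$, decay in $\lambda_{q+1}$ only becomes available after one observes that $\div$ kills the resonant direction of the geometric tensor, and one must make sure that passing from $L^{3/2}$ to $L^1$ (forced by the endpoint failure of Calder\'on--Zygmund bounds) still beats the $\lambda_{q+1}^{\gamma}$-growth of the intermittent profiles $\p$.
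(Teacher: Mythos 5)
Your proposal is correct and follows essentially the same route as the paper: the same splitting into the resonant diagonal part (cancelled in space via the First Geometric Lemma and in time via the temporal corrector), the off-diagonal part (controlled through the small support overlap of distinct shear profiles), and the corrector interactions (controlled by Cauchy--Schwarz with the $L^2$ bounds of Propositions \ref{es-wdL2}--\ref{es-wdLpLinfty}), with the $H^3$ bound obtained by crude product estimates. The only difference is cosmetic bookkeeping: you convert $\mathcal{R}_a\partial_t\dtq$ into $\mathcal{R}_a\div\big(\MMM_q(\g^2-1)\big)$ up front and perform one combined cancellation, whereas the paper keeps $\mathcal{R}_a\partial_t\dtq$ and cancels the $\PP(\g^2)$ piece against it at the end of its decomposition \eqref{Moscdecom}.
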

\begin{proof}To estimate the magnetic oscillation term and temporal flow term, we rewrite it as follows:
\begin{equation}\label{Mosc}
\begin{aligned}
\Mosc+\mathcal{R}_a \partial_t d^{(t)}_{q+1}=&\mathcal{R}_a\big[\div(\MMM_q+(w^{(p)}_{q+1}\otimes d^{(p)}_{q+1})-(d^{(p)}_{q+1}\otimes w^{(p)}_{q+1}))+ \partial_t d^{(t)}_{q+1}\big]\\
&+\mathcal{R}_a\div\big[(w^{(c)}_{q+1}+w^{(t)}_{q+1})\otimes d_{q+1}+w^{(p)}_{q+1}\otimes (d^{(c)}_{q+1}+d^{(t)}_{q+1})\\
&-(d^{(c)}_{q+1}+d^{(t)}_{q+1})\otimes w_{q+1}+d^{(p)}_{q+1}\otimes (w^{(c)}_{q+1}+w^{(t)}_{q+1})\big]\\
:=&O^b_1+O^b_2.
\end{aligned}
\end{equation}
For $O^b_1$, we obtain by the definition of $(w^{(p)}_{q+1}, d^{(p)}_{q+1})$ that
\begin{equation}\label{wpdp-dpwp}
\begin{aligned}
O^b_1
=&\mathcal{R}_a\div[\MMM_q
+\sum_{k\in\Lambda_{b}}a^2_{(b,k)}\p^2\psi^2_k\g^2(\bar k\otimes\bar{\bar{k}}-\bar{\bar{k}}\otimes\bar k)]+\mathcal{R}_a\partial_t d^{(t)}_{q+1}\\
&+\mathcal{R}_a\div[\sum_{k\in \Lambda_v,k'\in \Lambda_b}a_{(v,k)}a_{(b,k')}\p\pd\psi_k\psi_{k'}\g^2(\bar{k}\otimes \bar{k'}-\bar{k'}\otimes \bar{k})]\\
&+\mathcal{R}_a\div[\sum_{k\in\neq k'\in \Lambda_b}a_{(v,k)}a_{(b,k')}\p\pd\psi_k\psi_{k'}\g^2(\bar{k}\otimes \bar{k'}-\bar{k'}\otimes \bar{k})]\\
:=&O^b_{1,1}+O^b_{1,2}+O^b_{1,3}.
\end{aligned}
\end{equation}
Noting the fact that $k\neq k'$, we obtain by Proposition \ref{guji1} that
\begin{equation}\label{Moscd}
\begin{aligned}
\big\|O^b_{1,2}\|_{L^1_{t,x}}+\big\|O^b_{1,3}\|_{L^1_{t,x}}
\lesssim &{\lambda^{24}_q}\|\p\pd\|_{L^{\frac{3}{2}}_x}\|\g^2\|_{L^1_t}
\lesssim {\lambda^{24}_q}\lambda^{-\frac{\gamma}{6}}_{\lambda_{q+1}}.
\end{aligned}
\end{equation}
In terms of $O^b_{1,1}$, we decompose it into four parts:
\begin{equation}\label{Moscdecom}
\begin{aligned}
O^b_{1,1}
=&\mathcal{R}_a\div\big[\MMM_q
+\sum_{k\in\Lambda_{b}}a^2_{(b,k)}(\bar k\otimes\bar{\bar{k}}-\bar{\bar{k}}\otimes\bar k)\big]\\
&+\mathcal{R}_a\div\big[\sum_{k\in\Lambda_{b}}a^2_{(b,k)}(\PP\psi^2_k)\p^2\g^2(\bar k\otimes\bar{\bar{k}}-\bar{\bar{k}}\otimes\bar k)\big]\\
&+\mathcal{R}_a\div\big[\sum_{k\in\Lambda_{b}}a^2_{(b,k)}(\PP\p^2
)\g^2(\bar k\otimes\bar{\bar{k}}-\bar{\bar{k}}\otimes\bar k)\big]\\
&+\mathcal{R}_a\div\big[\sum_{k\in\Lambda_{b}}a^2_{(b,k)}(\PP\g^2)(\bar k\otimes\bar{\bar{k}}-\bar{\bar{k}}\otimes\bar k)\big]+\mathcal{R}_a\partial_t d^{(t)}_{q+1}.
\end{aligned}
\end{equation}
By Lemma \ref{first L}, thanks to $\eta_q(t)\equiv 1$ for $t\in \text{supp} \,\MMM_q$, we obtain that
\begin{align}\label{M1}
&\MMM_q
+\sum_{k\in\Lambda_{b}}a^2_{(b,k)}(\bar k\otimes\bar{\bar{k}}-\bar{\bar{k}}\otimes\bar k)
=\MMM_q-\eta^2_q\MMM_q=0.
\end{align}
With respect to the second term on the right-hand side of \eqref{Moscdecom}, due to $k\perp \bar{k}\perp \bar{\bar{k}}$, one deduces by Lemma
\ref{holder2} that
\begin{equation}\label{M2}
\begin{aligned}
&\Big\|\mathcal{R}_a\div\big[\sum_{k\in\Lambda_{b}}a^2_{(b,k)}(\PP\psi^2_k)\p^2\g^2(\bar k\otimes\bar{\bar{k}}-\bar{\bar{k}}\otimes\bar k)\big]\Big\|_{L^1_{t,x}}\\
=&\Big\|\mathcal{R}_a\sum_{k\in\Lambda_{b}}\div(a^2_{(b,k)}\bar k\otimes\bar{\bar{k}}-\bar{\bar{k}}\otimes\bar k)(\PP\psi^2_k)\p^2\g^2\Big\|_{L^1_{t,x}}\\
\lesssim&\lambda^{-1}_{q+1}\|a^2_{(b,k)}\|_{L^\infty_t W^{2,\infty}_x}\|\p^2\|_{L^\infty_x}\|\g^2\|_{L^1_t}\le \lambda^{C_0}_q\lambda^{\gamma-1}_{q+1}.
\end{aligned}
\end{equation}
In the same argument as deriving the above estimate, we can bound the second term on the right-hand side of \eqref{Moscdecom} by
\begin{equation}\label{M3}
\begin{aligned}
&\Big\|\mathcal{R}_a\div\big[\sum_{k\in\Lambda_{b}}a^2_{(b,k)}(\PP\p^2
)\g^2(\bar k\otimes\bar{\bar{k}}-\bar{\bar{k}}\otimes\bar k)\big]\Big\|_{L^1_{t,x}}\\
=&\Big\|\mathcal{R}_a\sum_{k\in\Lambda_{b}}\div(a^2_{(b,k)}\bar k\otimes\bar{\bar{k}}-\bar{\bar{k}}\otimes\bar k)(\PP\p^2
)\g^2\Big\|_{L^1_{t,x}}\\
\lesssim&\lambda^{-\frac{1}{2}}_{q+1}\|a^2_{(b,k)}\|_{L^\infty_t W^{2,\infty}_x}\|\g^2\|_{L^1_t}\le \lambda^{C_0}_q\lambda^{-\frac{1}{2}}_{q+1}.
\end{aligned}
\end{equation}
According to the definition of $d^{(t)}_{q+1}$, thanks to  $k\perp \bar{k}\perp \bar{\bar{k}}$,  we have
\begin{equation}\label{M4}
\begin{aligned}
&\mathcal{R}_a\div\big[\sum_{k\in\Lambda_{b}}a^2_{(b,k)}(\PP\g^2)(\bar k\otimes\bar{\bar{k}}-\bar{\bar{k}}\otimes\bar k)\big]+\mathcal{R}_a\partial_t d^{(t)}_{q+1}\\
=&\mathcal{R}_a\sum_{k\in\Lambda_{b}}\div\big[a^2_{(b,k)}(\bar k\otimes\bar{\bar{k}}-\bar{\bar{k}}\otimes\bar k)\big](\PP\g^2)+\mathcal{R}_a\partial_t d^{(t)}_{q+1}=0.
\end{aligned}
\end{equation}
Applying estimate \eqref{M1}--\eqref{M4} to \eqref{Moscdecom}, we can infer that
\begin{align}\label{MoscmL1}
\|O^b_{1,1}\|_{L^{1}_{t,x}}\lesssim\lambda^{C_0}_q\lambda^{-\frac{1}{2}}_{q+1}.
\end{align}
Now we turn to estimate $O^b_2$ in \eqref{Mosc}. By Proposition~\ref{es-wdL2} and Proposition \ref{es-wdLpLinfty}, we have
\begin{equation}\label{dlinL1}
\begin{aligned}
\|O^b_2\|_{L^1_{t,x}}
\lesssim &\|(w ^{(c)}_{q+1}, w^{(t)}_{q+1}, d^{(c)}_{q+1}, d^{(t)}_{q+1})\|_{L^2_{t,x}}\|(d ^{(p)}_{q+1}, d_{q+1}, w^{(p)}_{q+1}, w_{q+1})\|_{L^2_{t,x}}\\
\lesssim &\lambda^{C_0}_q\lambda^{\gamma-\frac{1}{2}}_{q+1}
+\lambda^{C_0}_q\lambda^{-\iota}_{q+1}.
\end{aligned}
\end{equation}
Combining the above inequality with \eqref{Moscd} and  \eqref{MoscmL1}, we complete the proof of \eqref{MoscL1}.

Furthermore, with aid of Proposition \ref{a{(b,k)}a{(v,k)}} and Proposition \ref{guji1}, one infer from \eqref{Moscd}, \eqref{M2} and \eqref{M3} that
\begin{equation}\label{MoscH31}
\begin{aligned}
\big\|O^b_{1,2}\|_{L^\infty_tH^3_x}
\lesssim &\|(a_{(v,k)}a_{(b,k')})\|_{L^\infty_tH^3_x}\|\p\pd\|_{L^\infty_x}\|\psi_k\psi_{k'}\|_{H^3_x}
\|\g^2\|_{L^\infty_t}\\
&+\|(a_{(v,k)}a_{(b,k')})\|_{L^\infty_tH^3_x}\|\psi_k\psi_{k'}\|_{L^\infty_x}\|\p\pd\|_{H^3_x}
\|\g^2\|_{L^\infty_t}\\
&\lesssim\lambda^{C_0}_q\lambda^{5+\gamma}_{q+1},
\end{aligned}
\end{equation}
and
\begin{equation}\label{MoscH32}
\begin{aligned}
\big\|O^b_{1,2}\big\|_{L^\infty_tH^3_x}
\lesssim&\|a^2_{(b,k)}\|_{L^\infty_t H^4_x}\|\g^2\|_{L^\infty_t}(\|\psi^2_k\|_{L^\infty_x}\|\p^2\|_{H^3_x}+\|\psi^2_k\|_{H^3_x}\|\p^2\|_{L^\infty_x})\\
&+\|a^2_{(b,k)}\|_{L^\infty_t H^4_x}\|\g^2\|_{L^\infty_t}\|\p^2\|_{H^3_x}\\
\lesssim&\lambda^{C_0}_q\lambda^{\sigma+3+\gamma}_{q+1}.
\end{aligned}
\end{equation}
Thanks to Proposition \ref{es-wdLpLinfty}, we have
\begin{equation}\label{MoscH33}
\begin{aligned}
\|O^b_2\|_{L^\infty_t H^3_x}
\lesssim &\|(w ^{(c)}_{q+1}, w^{(t)}_{q+1}, d^{(c)}_{q+1}, d^{(t)}_{q+1})\|_{L^\infty_{t,x}}\|(d ^{(p)}_{q+1}, d_{q+1}, w^{(p)}_{q+1}, w_{q+1})\|_{L^\infty_tH^3_x}\\
&+\|(w ^{(c)}_{q+1}, w^{(t)}_{q+1}, d^{(c)}_{q+1}, d^{(t)}_{q+1})\|_{L^\infty_tH^3_x}\|(d ^{(p)}_{q+1}, d_{q+1}, w^{(p)}_{q+1}, w_{q+1})\|_{L^\infty_{t,x}}\\
\lesssim &\lambda^{2C_0}_q\lambda^{5+\gamma}_{q+1}.
\end{aligned}
\end{equation}
Collecting the above three estimates together yields \eqref{MoscH3}.
\end{proof}
\subsection{Estimates for the Reynolds stress}
The proof of Proposition \ref{Mlin} can be applied to estimate $\Rlin-\mathcal{R}\del_t w^{(t)}_{q+1}$, we give the result  directly:
\begin{prop}
\begin{align}
&\|\Rlin-\mathcal{R}\del_t w^{(t)}_{q+1}\|_{L^1_{t,x}}\lesssim \lambda^{2C_0}_q(\lambda^{\sigma-\frac{\gamma}{4}}_{q+1}+\lambda^{-\frac{\gamma}{6}}_{q+1}+\lambda^{-\sigma}_{q+1}),\label{RlinL1}\\
&\|\Rlin-\mathcal{R}\del_t w^{(t)}_{q+1}\|_{L^\infty_tH^3_x}\lesssim \lambda^{C_0}_{q}\lambda^{5+\sigma+\frac{\gamma}{2}}_{q+1}\label{RlinH3}.
\end{align}
\end{prop}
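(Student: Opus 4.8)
The plan is to mirror the proof of Proposition \ref{Mlin}, since $\Rlin$ has exactly the same algebraic shape as $\Mlin$, with the symmetric inverse divergence $\mathcal{R}$ in place of the skew-symmetric $\mathcal{R}_a$ and with an extra sum over $\Lambda_v$ in the amplitudes. First I would isolate the temporal corrector and write
\begin{align*}
\Rlin-\mathcal{R}\del_t w^{(t)}_{q+1}
={}&\mathcal{R}\big(\del_t w^{(p)}_{q+1}+\del_t w^{(c)}_{q+1}-\Delta w_{q+1}\big)\\
&+\mathcal{R}\div\big(w_{q+1}\otimes\vv_q+\vv_q\otimes w_{q+1}-\bb_q\otimes d_{q+1}-d_{q+1}\otimes\bb_q\big),
\end{align*}
which is the precise analogue of the decomposition of $\Mlin-\mathcal{R}_a\del_t d^{(t)}_{q+1}$; note that the pressure $p_v$ does not enter here, it having been placed in $\Rosc$.

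For the first group of terms I would use the curl representation \eqref{wp+wc}, which writes $w^{(p)}_{q+1}+w^{(c)}_{q+1}$ as a sum of $\curl$'s over $k\in\Lambda_v\cup\Lambda_b$, so that $\mathcal{R}\del_t(w^{(p)}_{q+1}+w^{(c)}_{q+1})$ and $\mathcal{R}\Delta(w^{(p)}_{q+1}+w^{(c)}_{q+1})$ each gain a factor $\lambda_{q+1}^{-1}$; since $\mathcal{R}$ is a Calder\'on--Zygmund operator, bounded on $L^{3/2}_x$, I would then reproduce \eqref{es-Mlin-1}--\eqref{es-Mlin-2} verbatim, using Proposition \ref{guji1} and Proposition \ref{a{(b,k)}a{(v,k)}}, to get $\|\mathcal{R}\del_t(w^{(p)}_{q+1}+w^{(c)}_{q+1})\|_{L^1_{t,x}}\le\lambda_q^{C_0}\lambda_{q+1}^{\sigma-\gamma/4}$ and $\|\mathcal{R}\Delta w_{q+1}\|_{L^1_{t,x}}\lesssim\lambda_q^{C_0}(\lambda_{q+1}^{-\gamma/6}+\lambda_{q+1}^{-\sigma})$, the $\lambda_{q+1}^{-\sigma}$ originating from the $\div\div$-structure of $w^{(t)}_{q+1}$. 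For the transport/interaction term I would bound $\mathcal{R}\div(\cdots)$ in $L^1_tL^{3/2}_x$ by $\big(\|w_{q+1}\|_{L^1_tL^\infty_x}+\|d_{q+1}\|_{L^1_tL^\infty_x}\big)\|(\vv_q,\bb_q)\|_{L^\infty_{t,x}}$ and apply Proposition \ref{estimate-vvq} and Proposition \ref{es-wdLpLinfty} exactly as in \eqref{es-Mlin-3}, obtaining $\lambda_q^{2C_0}(\lambda_{q+1}^{\gamma/2-1}+\lambda_{q+1}^{-\sigma})$; summing the three contributions yields \eqref{RlinL1}. For \eqref{RlinH3} I would repeat \eqref{es-Mlin-H3}, \eqref{es-Mlin-H32} and \eqref{dlinH3}, using $H^2(\TTT^3)\hookrightarrow L^\infty(\TTT^3)$ and the Leibniz rule in $H^3_x$ together with Proposition \ref{guji1}, Proposition \ref{a{(b,k)}a{(v,k)}}, Proposition \ref{estimate-vvq} and Proposition \ref{es-wdLpLinfty}: the first group of terms is controlled by $\lambda_q^{C_0}\lambda_{q+1}^{5+\sigma+\gamma/2}$ and the transport/interaction term by $\lambda_q^{2C_0}\lambda_{q+1}^{4+\gamma/2}$, and since $\lambda_q^{C_0}\le\lambda_{q+1}$ the latter is absorbed, giving \eqref{RlinH3}.

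The step I expect to be the only genuinely delicate one — and it is the same delicate point as in Proposition \ref{Mlin} — is guaranteeing that in the $L^1_{t,x}$ bounds $\mathcal{R}$ is always composed with a full spatial derivative, so that $\mathcal{R}\del_t$ and $\mathcal{R}\Delta$ never land on the bare concentrated profile $\phi_{(\gamma,\frac12,k)}\psi_k$, whose $L^{3/2}_x$ norm is too large to close the estimate. This is exactly what the curl identity \eqref{wp+wc} secures for the principal and corrector parts and what the Leray-projected double divergence secures for $w^{(t)}_{q+1}$; once those cancellations are exploited, the replacement of $\mathcal{R}_a$ by $\mathcal{R}$ and the finitely many additional terms from the $\Lambda_v$-sum change nothing, and the rest is the routine H\"older and product-rule bookkeeping already performed for the magnetic stress.
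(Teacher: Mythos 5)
Your proposal is correct and takes exactly the route the paper intends: the paper itself gives no separate argument here, stating only that ``the proof of Proposition \ref{Mlin} can be applied,'' and your adaptation — the analogous decomposition with $p_v$ correctly left in $\Rosc$, the curl identity \eqref{wp+wc} to gain the factor $\lambda_{q+1}^{-1}$ before applying the Calder\'on--Zygmund bound, and the same \Holder/product bookkeeping for the transport and $H^3$ estimates — is precisely that application. The only nitpick is that the $\lambda_{q+1}^{-\sigma}$ contribution from the temporal corrector comes from the bound $\|h_\sigma\|_{L^\infty_t}\lesssim\lambda_{q+1}^{-\sigma}$ in \eqref{ginfty} rather than from the divergence structure per se, but this does not affect the argument.
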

Next, we turn to estimate the oscillation term and the temporal flow part in terms of the Reynolds stress.
\begin{prop}\label{Ros}
\begin{align}
&\|\Rosc+\mathcal{R}_a \partial_t w^{(t)}_{q+1}\|_{L^1_{t,x}}\lesssim \lambda^{C_0}_q(
\lambda^{-\frac{\gamma}{6}}_{q+1}+\lambda^{\gamma-\frac{1}{2}}_{q+1}+\lambda^{-\sigma}_{q+1}),\label{RoscL1}\\
&\|\Rosc+\mathcal{R}_a \partial_t w^{(t)}_{q+1}\|_{L^\infty_t H^3_x}\lesssim \lambda^{2C_0}_{q}\lambda^{5+\gamma}_{q+1}.\label{RoscH3}
\end{align}
\end{prop}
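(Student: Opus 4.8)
The plan is to mirror the estimate of $\Mosc+\mathcal{R}_a\del_t d^{(t)}_{q+1}$ carried out above, the one new ingredient being that the $\Lambda_v$-sum is resolved through the Second Geometric Lemma (Lemma~\ref{first S}) rather than the First. In analogy with \eqref{Mosc} I would write
\[
\Rosc+\mathcal{R}\,\del_t w^{(t)}_{q+1}=O^v_1+O^v_2,\qquad
O^v_1:=\mathcal{R}\big[\div\big(\RRR_q+w^{(p)}_{q+1}\otimes w^{(p)}_{q+1}-d^{(p)}_{q+1}\otimes d^{(p)}_{q+1}\big)-\nabla p_v+\del_t w^{(t)}_{q+1}\big],
\]
so that $O^v_2$ collects precisely those products in the expansion of $w_{q+1}\otimes w_{q+1}-d_{q+1}\otimes d_{q+1}$ in which at least one corrector $w^{(c)}_{q+1},w^{(t)}_{q+1},d^{(c)}_{q+1},d^{(t)}_{q+1}$ appears. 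The term $O^v_2$ is handled exactly as $O^b_2$ in \eqref{dlinL1}: Hölder together with the $L^2_{t,x}$ bounds of Proposition~\ref{es-wdL2} (principal parts of size $\delta_{q+1}^{1/2}\lambda_q^{-\alpha/2}$, correctors of size $\lambda_q^{C_0}\lambda_{q+1}^{\gamma-1/2}$ or $\lambda_q^{C_0}\lambda_{q+1}^{-\sigma}$) already gives the $\lambda_{q+1}^{\gamma-1/2}+\lambda_{q+1}^{-\sigma}$ contribution to \eqref{RoscL1}.

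For $O^v_1$ I would split the double frequency sum into the genuinely off-diagonal part ($k\neq k'$, which subsumes all $\Lambda_v\times\Lambda_b$ pairs since $\Lambda_v\cap\Lambda_b=\emptyset$) and the diagonal part. The off-diagonal part is bounded as in \eqref{Moscd} using $|\text{supp}(\p\pd)|\lesssim\lambda_{q+1}^{-2\gamma}$ from Proposition~\ref{guji1} and $\|\g^2\|_{L^1_t}=1$, giving $\lesssim\lambda_q^{C_0}\lambda_{q+1}^{-\gamma/6}$. On the diagonal I use the telescoping identity
\[
\p^2\g^2\psi_k^2=1+(\PP\psi_k^2)\,\p^2\g^2+(\PP\p^2)\,\g^2+\PP\g^2,
\]
valid because $\int\p^2=\int\psi_k^2=\int\g^2=1$. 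The $(\PP\psi_k^2)$- and $(\PP\p^2)$-pieces carry the spatial frequency gaps $\lambda_{q+1}$ and $\lambda_{q+1}^{1/2}$, so after $\mathcal{R}\div$ and the inverse-divergence bounds one gains $\lambda_{q+1}^{-1}$, resp. $\lambda_{q+1}^{-1/2}$, whence $\lesssim\lambda_q^{C_0}(\lambda_{q+1}^{\gamma-1}+\lambda_{q+1}^{-1/2})\le\lambda_q^{C_0}\lambda_{q+1}^{\gamma-1/2}$, exactly as in \eqref{M2}--\eqref{M3}. The ``$1$''-piece is
\[
\sum_{k\in\Lambda_v}a_{(v,k)}^2\,\bar k\otimes\bar k+\sum_{k\in\Lambda_b}a_{(b,k)}^2\,(\bar k\otimes\bar k-\bar{\bar k}\otimes\bar{\bar k});
\]
applying the Second Geometric Lemma to ${\rm Id}-\tfrac{R_v}{\rho_v}$ gives $\sum_{k\in\Lambda_v}a_{(v,k)}^2\,\bar k\otimes\bar k=\eta_q^2(\rho_v\,{\rm Id}-R_v)$, and since by construction $R_v=\RRR_q+\sum_{k\in\Lambda_b}a_{(b,k)}^2(\bar k\otimes\bar k-\bar{\bar k}\otimes\bar{\bar k})$ and $\eta_q\equiv1$ on $\text{supp}\,\RRR_q$, the ``$1$''-piece equals $\eta_q^2\rho_v\,{\rm Id}-\RRR_q+(1-\eta_q^2)\sum_{k\in\Lambda_b}a_{(b,k)}^2(\bar k\otimes\bar k-\bar{\bar k}\otimes\bar{\bar k})$. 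The scalar $\eta_q^2\rho_v$ cancels the corresponding summand of $p_v$; the matrix $-\RRR_q$ cancels the $\RRR_q$ inside $\Rosc$; the remaining term is supported on $\{0<\eta_q<1\}$, where $\MMM_q=0$ hence $\rho_b=\delta_{q+1}\lambda_q^{-2\alpha}$, so it is $O\big(\tau_q\delta_{q+1}\lambda_q^{-2\alpha}\big)$ in $L^1_{t,x}$ and negligible. Finally the $\PP\g^2$-piece is cancelled by the remaining two summands of $p_v$ together with $\mathcal{R}\,\del_t w^{(t)}_{q+1}$ --- the very reason $w^{(t)}_{q+1}$ is built from $h_\sigma$ with $h_\sigma'=\PP\g^2$, as in \eqref{M4} --- up to the contribution in which $\del_t$ falls on the amplitudes $a_{(v,k)}^2,a_{(b,k)}^2$ rather than on $h_\sigma$, which is $\lesssim\lambda_q^{C_0}\|h_\sigma\|_{L^\infty_t}\lesssim\lambda_q^{C_0}\lambda_{q+1}^{-\sigma}$ by \eqref{ginfty}. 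Collecting the pieces proves \eqref{RoscL1}.

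For the $H^3$ bound \eqref{RoscH3} no cancellation is needed: I bound $O^v_1+O^v_2$ termwise, using $\|\mathcal{R}\div(\cdot)\|_{H^3_x}\lesssim\|\cdot\|_{H^3_x}$, the product rule $\|fg\|_{H^3_x}\lesssim\|f\|_{H^3_x}\|g\|_{L^\infty_x}+\|f\|_{L^\infty_x}\|g\|_{H^3_x}$, and Propositions~\ref{guji1}, \ref{a{(b,k)}a{(v,k)}} together with \eqref{dH3} and \eqref{dLp} at $p=\infty$; the dominant term is $\|w_{q+1}\otimes w_{q+1}-d_{q+1}\otimes d_{q+1}\|_{L^\infty_tH^3_x}\lesssim\|(w_{q+1},d_{q+1})\|_{L^\infty_tH^3_x}\,\|(w_{q+1},d_{q+1})\|_{L^\infty_{t,x}}\lesssim\lambda_q^{2C_0}\lambda_{q+1}^{(4+\gamma/2)+(1+\gamma/2)}=\lambda_q^{2C_0}\lambda_{q+1}^{5+\gamma}$, while the $\RRR_q$, $\nabla p_v$ and temporal-corrector contributions are of strictly lower order, giving \eqref{RoscH3}.

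The step I expect to be the main obstacle is the diagonal ``$1$''-piece identity: one must follow the three nested truncations --- $\rho_b$ through $\MMM_q$, then $R_v$ through $\rho_b,\MMM_q,\RRR_q$, then $\rho_v$ through $R_v$ --- and verify that invoking the Second Geometric Lemma on ${\rm Id}-R_v/\rho_v$ reproduces precisely $\RRR_q$ plus a pressure gradient, so that the $d^{(p)}_{q+1}\otimes d^{(p)}_{q+1}$ term forced on us by the magnetic construction is absorbed and every genuine remainder (the $(1-\eta_q^2)$ boundary term, the high-frequency $\PP$-corrections, the $\del_t a^2$ temporal piece) carries a strictly negative power of $\lambda_{q+1}$. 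Whether these negative powers, set against the $\lambda_q^{C_0}$ and $\lambda_q^{2C_0}$ prefactors, actually close the inductive estimates \eqref{e:vq-LpLinftyC1-}--\eqref{e:RR_q-L1} is governed by the parameter choices \eqref{gamma} and is carried out in the final iteration step, not here.
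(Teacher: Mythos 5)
Your proposal is correct and follows essentially the same route as the paper: the identical splitting into $O^v_1+O^v_2$, the diagonal/off-diagonal separation with the support-overlap bound $\lambda_{q+1}^{-\gamma/6}$, the telescoping of $\p^2\psi_k^2\g^2$ into mean plus $\PP$-corrections, the Second Geometric Lemma cancellation of $\RRR_q$ against $\sum_k a_{(v,k)}^2\bar k\otimes\bar k+\sum_k a_{(b,k)}^2(\bar k\otimes\bar k-\bar{\bar k}\otimes\bar{\bar k})$ up to the pressure gradient $\eta_q^2\nabla\rho_v$, the $\PP\g^2$-cancellation via $\partial_t w^{(t)}_{q+1}$ and $p_v$, and the termwise $H^3$ estimate. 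In fact you are slightly more careful than the paper in tracking the $(1-\eta_q^2)$ boundary remainder and the $\partial_t a^2$ contribution; for the former, the cleanest closing remark is that on $\{0<\eta_q<1\}$ one has $\MMM_q=\RRR_q=0$ so the remainder is spatially constant and is annihilated exactly by $\mathcal{R}\div$ (rather than merely small in $L^1_{t,x}$, where boundedness of $\mathcal{R}\div$ is not available).
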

\begin{proof}We decompose $\Rosc+\mathcal{R}_a \partial_t w^{(t)}_{q+1}$ into two parts:
\begin{equation}\label{Rosc}
\begin{aligned}
\Rosc+\mathcal{R} \partial_t w^{(t)}_{q+1}
=&\mathcal{R}(\div(\RRR_q+w^{(p)}_{q+1}\otimes w^{(p)}_{q+1}-d^{(p)}_{q+1}\otimes d^{(p)}_{q+1})+ \partial_t w^{(t)}_{q+1}-\nabla p_v)\\
&+\mathcal{R}\div((w^{(c)}_{q+1}+w^{(t)}_{q+1})\otimes w_{q+1}+w^{(p)}_{q+1}\otimes (w^{(c)}_{q+1}+w^{(t)}_{q+1}))\\
&-(d^{(c)}_{q+1}+d^{(t)}_{q+1})\otimes d_{q+1}+d^{(p)}_{q+1}\otimes (d^{(c)}_{q+1}+d^{(t)}_{q+1}))\\
:=&O^v_1+O^v_2.
\end{aligned}
\end{equation}
By the definition of $(w^{(p)}_{q+1}, d^{(p)}_{q+1})$, we rewrite $O^v_1$ to  be
\begin{align*}
O^v_1=&\mathcal{R}\div\Big(\RRR_q+\sum_{k\in \Lambda_v}a^2_{(v,k)}\p^2\psi^2_k\g^2\bar{k}\otimes \bar{k}\\
&+\sum_{k\in \Lambda_b}a^2_{(b,k)}\p^2\psi^2_k\g^2(\bar{k}\otimes \bar{\bar{k}}-\bar{\bar{k}}\otimes \bar{k})+\partial_t w^{(t)}_{q+1}-\nabla p_v\Big)\\
&+\mathcal{R}\div\Big(\sum_{ k\neq k'\in \Lambda_v}a_{(v,k)}a_{(v,k')}\p\pd\psi_k\psi_{k'}\g^2\bar{k}\otimes \bar{k'}\\
&+\sum_{ k\neq k'\in \Lambda_b}{a_{(b,k)}a_{(b,k')}}\p\pd\psi_k\psi_{k'}\g^2\bar{k}\otimes \bar{k'}\\
&+\sum_{ k\in\Lambda_v, k'\in \Lambda_b}{a_{(v,k)}a_{(b,k')}}\p\pd\psi_k\psi_{k'}\g^2\bar{k}\otimes \bar{k'}\\
&-\sum_{k\neq k'\in \Lambda_b}{a_{(b,k)}a_{(b,k')}}\p\pd\psi_k\psi_{k'}\g^2\bar{\bar{k}}\otimes \bar{\bar{k'}}\Big)\\
:=&O^v_{1,1}+O^v_{1,2}.
\end{align*}
Similar to the argument  as in the proof of  \eqref{Moscd}, we have
$\|O^v_{1,2}\|_{L^1_{t,x}}\lesssim {\lambda^{24}_q}\lambda^{-\frac{\gamma}{6}}_{\lambda_{q+1}}.$

For $O^v_{1,1}$, note the fact that $k\perp \bar{k}\perp  \bar{\bar{k}}$, we decompose it as follows:
\begin{align*}
O^v_{1,1}=&\mathcal{R}\div (\RRR_q+\sum_{k\in \Lambda_v}a^2_{(v,k)}\bar{k}\otimes \bar{k}+\sum_{ k\in \Lambda_b}a^2_{(b,k)}(\bar{k}\otimes \bar{k}-\bar{\bar{k}}\otimes \bar{\bar{k}}))\\
&+\mathcal{R}\div \sum_{ k\in \Lambda_v}a^2_{(v,k)}\PP(\psi^2_k)
\p^2\g^2\bar{k}\otimes \bar{k}\\
&+\mathcal{R}\div \sum_{k\in \Lambda_v}a^2_{(v,k)}\PP\big(\p^2)\g^2\bar{k}\otimes \bar{k}\\
&+\mathcal{R}\div \sum_{k\in \Lambda_b}a^2_{(b,k)}\PP(\psi^2_k)
\p^2\g^2(\bar{k}\otimes \bar{k}-\bar{\bar{k}}\otimes \bar{\bar{k}})\\
&+\mathcal{R}\div \sum_{k\in \Lambda_b}a^2_{(b,k)}\PP(\p^2)
\g^2(\bar{k}\otimes \bar{k}-\bar{\bar{k}}\otimes \bar{\bar{k}})\\
&+\mathcal{R}(\sum_{k\in \Lambda_v}\div (a^2_{(v,k)}\bar{k}\otimes \bar{k})\PP(\g^2)+\partial_t w^{(t)}_{q+1}\\
&+\sum_{ k\in \Lambda_b}\div(a^2_{(b,k)}\bar{k}\otimes \bar{k}-\bar{\bar{k}}\otimes \bar{\bar{k}})\PP(\g^2)-\nabla p_v).
\end{align*}
{Thanks to  \eqref{wt} and \eqref{pv}, it is easy to deduce that}
\begin{align*}
&\sum_{k\in \Lambda_v}\div (a^2_{v,k}\bar{k}\otimes \bar{k})\PP\big(\g^2\big)+\partial_t w^{(t)}_{q+1}+\sum_{ k\in \Lambda_b}\div(a^2_{b,k}\bar{k}\otimes \bar{k}-\bar{\bar{k}}\otimes \bar{\bar{k}})\PP\big(\g^2\big)-\nabla p_v\\
=&\sum_{k\in \Lambda_v}\frac{\div\div}{\Delta}a^2_{(v,k)}(\bar{k}\otimes\bar{k})\PP(\g^2)+\sum_{k\in \Lambda_b}\frac{\div\div}{\Delta}a^2_{(b,k)}(\bar{k}\otimes\bar{k}-\bar{\bar{k}}\otimes\bar{\bar{k}})\PP(\g^2)-\nabla p_v\\
=&-\eta^2_q\nabla\rho_v.
\end{align*}
By Lemma \ref{first S} and the definition of $R_v$,  the first three terms can be simplified as
\begin{align*}
&\div(\RRR_q+\sum_{k\in \Lambda_v}\eta^2_q\rho_va^2_{v,k}({\rm Id}-\tfrac{R_v}{\rho_v})\bar{k}\otimes \bar{k}+\sum_{k\in \Lambda_b}\eta^2_q\rho_va^2_{b,k}(-\tfrac{\MMM_q}{\rho_v})(\bar{k}\otimes \bar{k}-\bar{\bar{k}}\otimes \bar{\bar{k}}))\\
=&\eta^2_q\div(\RRR_q+\rho_v {\rm Id}-R_v+\sum_{k\in \Lambda_b}\eta^2_q\rho_va^2_{b,k}(-\tfrac{\MMM_q}{\rho_v})(\bar{k}\otimes \bar{k}-\bar{\bar{k}}\otimes \bar{\bar{k}}))=\eta^2_q\nabla\rho_v.
\end{align*}
Collecting the above two equalities, we have
\begin{align*}
&\div\big(\RRR_q+\sum_{k\in \Lambda_u}a^2_{(v,k)}\bar{k}\otimes \bar{k}+\sum_{ k\in \Lambda_b}a^2_{(b,k)}(\bar{k}\otimes \bar{k}-\bar{\bar{k}}\otimes \bar{\bar{k}})\big)+\sum_{k\in \Lambda_v}\div (a^2_{(v,k)}\bar{k}\otimes \bar{k})\PP(\g^2)\\
&+\partial_t w^{(t)}_{q+1}+\sum_{ k\in \Lambda_b}\div(a^2_{(b,k)}\bar{k}\otimes \bar{k}-\bar{\bar{k}}\otimes \bar{\bar{k}})\PP(\g^2)-\nabla p_v=0.
\end{align*}
Thus, utilizing similar methods of dealing with \eqref{M2} and \eqref{M2}, one infers that
\begin{align*}
\|O^v_{1,1}\|_{L^1_{t,x}}\lesssim&\|\mathcal{R}\div (a^2_{(v,k)}\bar{k}\otimes \bar{k})\PP(\psi^2_k)
\p^2\g^2\|_{L^1_{t,x}}\\
&+\|\mathcal{R}(\div a^2_{(v,k)}\bar{k}\otimes \bar{k} )\PP\big(\p^2)\g^2\|_{L^1_{t,x}}\\
&+\|\mathcal{R}\div (a^2_{(b,k)}\bar{k}\otimes \bar{k}-\bar{\bar{k}}\otimes \bar{\bar{k}})\PP(\psi^2_k)
\p^2\g^2\|_{L^1_{t,x}}\\
&+\|\mathcal{R}\div (a^2_{(b,k)}\bar{k}\otimes \bar{k}-\bar{\bar{k}}\otimes \bar{\bar{k}})\PP(
\p^2)\g^2\|_{L^1_{t,x}}
\lesssim\lambda^{C_0}_q\lambda^{-\frac{1}{2}}_{q+1}.
\end{align*}
Imitating the method of \eqref{dlinL1}, it is easy to get that
\begin{equation}\label{dlinL1}
\begin{aligned}
\|O^v_2\|_{L^1_{t,x}}
\lesssim &(\|(w ^{(c)}_{q+1}, w^{(t)}_{q+1}, d^{(c)}_{q+1}, d^{(t)}_{q+1})\|_{L^2_{t,x}}\|(d ^{(p)}_{q+1}, d_{q+1}, w^{(p)}_{q+1}, w_{q+1})\|_{L^2_{t,x}}\\
\lesssim &\lambda^{C_0}_q\lambda^{\gamma-\frac{1}{2}}_{q+1}
+\lambda^{C_0}_q\lambda^{-\sigma}_{q+1}.
\end{aligned}
\end{equation}
In the same way as deriving \eqref{MoscH31}--\eqref{MoscH33}, we infer \eqref{RoscH3}. Therefore, we finish the proof of Proposition \ref{Ros}.
\end{proof}
\section{Verification of iterative estimates at $q+1$ level}
In this section, we collect estimates of glued parts $(\vv_q, \bb_q)$, the perturbation $(w_{q+1}, d_{q+1})$, the Reynolds and magnetic stresses $(\RRR_{q+1}, \MMM_{q+1})$ to show they satisfies \eqref{e:vq-LpLinftyC1-}--\eqref{e:initial} under conditions \eqref{gamma}-\eqref{10}.

Firstly,  one shows by Proposition \ref{estimate-vvq} and Proposition \ref{es-wdL2} that
\begin{align*}
&\|(v_{q+1}, b_{q+1})\|_{L^2_{t,x}}+\|(v_{q+1}, b_{q+1})\|_{L^p_tL^\infty_x}+\|(v_{q+1}, b_{q+1})\|_{L^1_tC^{1-\e}_x}\\
\le& \sum_{i=1}^q\delta^{1/2}_i+C\lambda^{-\frac{3}{2}}_q+\delta^{\frac{1}{2}}_{q+1}\lambda^{-\frac{\alpha}{2}}_q+\lambda^{C_0}_q\lambda^{\gamma-\frac{1}{2}}_{q+1}+\lambda^{C_0}_q\lambda^{1+\frac{\gamma}{2}-\frac{2}{p}}_{q+1}+2\lambda^{C_0}_q\lambda^{-\sigma}_{q+1}
+\lambda^{C_0}_q\lambda^{\frac{\gamma}{2}-\e}_{q+1}\\
\le&\sum_{i=1}^q\delta^{1/2}_i+\delta^{1/2}_{q+1},
\end{align*}
where the last inequality is valid with  parameters satisfying condition \eqref{gamma} and large enough~$a$.  Next, combining Proposition \ref{estimate-vvq} with \eqref{dH3}, we have
\begin{align*}
&\|(v_{q+1}, b_{q+1})\|_{L^\infty_t H^3_x}
\lesssim\lambda^6_q+{\lambda^{C_0}_q}\lambda^{4+\frac{\gamma}{2}}_{q+1}\le \lambda^{6}_{q+1},
\end{align*}
where the last inequality holds thanks to $\gamma<1$ and $b>\tfrac{C_0}{2-\tfrac{\gamma}{2}}$ in \eqref{gamma}. Collecting \eqref{MlinH3}, \eqref{MoscH3}, \eqref{RlinH3} and \eqref{RoscH3}, one obtains that
\begin{align*}
&\|(\RR_{q+1}, \MM_{q+1})\|_{L^\infty_t H^3_x}
\lesssim{\lambda^{2C_0}_q}\lambda^{5+\sigma+\frac{\gamma}{2}}_{q+1}+{\lambda^{2C_0}_q}\lambda^{5+{\gamma}}_{q+1}\le \lambda^{6}_{q+1},
\end{align*}
where the last inequality is valid because $a$ is large enough, $\sigma+\frac{\gamma}{2}<1$ and $b>\tfrac{2C_0}{1-\sigma-\tfrac{\gamma}{2}}$.

Finally, with aid of \eqref{MlinL1}, \eqref{MoscL1}, \eqref{RlinL1} and \eqref{RoscL1}, thanks to $$\sigma<\max\{\tfrac{\gamma}{6}, \tfrac{1}{2}-\gamma, \tfrac{\gamma}{2}\}, \alpha<\frac{\sigma}{18}, b\beta<\frac{\sigma}{6},  b>\frac{6C_0}{\sigma},$$
one gets that
\begin{align*}
&\|(\RR_{q+1}, \MM_{q+1})\|_{L^1_{t,x}}
\lesssim\lambda^{2C_0}_q(\lambda^{-\frac{\gamma}{6}}_{q+1}+\lambda^{\gamma-\frac{1}{2}}_{q+1}+\lambda^{-\sigma}_{q+1}+\lambda^{\sigma-\frac{\gamma}{4}}_{q+1} )\le\delta_{q+2}\lambda^{-6\alpha}_{q+1}.
\end{align*}
Therefore, we prove that estimates \eqref{e:vq-LpLinftyC1-}--\eqref{e:RR_q-L1} hold with $q$ replaced by $q+1$  and \eqref{e:velocity-diff} is valid. Thanks to \eqref{e:vv_q-initial}, \eqref{dwt} and $10\tau_{q+1}<\tau_q$, it is easy to verify that
\begin{align*}
&(v_{q+1}, b_{q+1})\equiv(\vv_q, \bb_q)\equiv(v^{(1)}, b^{(1)}) \,\text {on}\,[0, 2T+\tfrac{7\tau_q}{6}]\times\TTT^3 \supset[0, 2T+2\tau_{q+1}]\times\TTT^3,\\
&(v_{q+1}, b_{q+1})\equiv(\vv_q, \bb_q)\equiv(v^{(2)}, b^{(2)}) \,\text {on}\,[3T-\tfrac{\tau_q}{6}, 1]\times\TTT^3 \supset[3T-\tfrac{\tau_{q+1}}{3}, 1]\times\TTT^3.
\end{align*}
Therefore, we complete the proof of Proposition \ref{p:main-prop}.
\section*{Acknowledgement}
This work is supported by the National Natural Science Foundation of China under grant No. 11871087 and Y. Nie is partially supported by CPSF (2021M700510).
\section{Appendix}
\subsection{Some technical tools}
\begin{lem}[\cite{1Cheskidov, 13Nonuniqueness}]\label{Holder}Assume that $1\le p\le \infty$ and $\lambda$ is a positive integer. Let $f,g:\TTT^d\mapsto \RR$ be smooth functions. Then we have
\[\big|\|fg(\lambda\cdot)\|_{L^p}-\|f\|_{L^p}\|g\|_{L^p}\big|\lesssim \lambda^{-\frac{1}{p}}\|f\|_{C^1}\|g\|_{L^p}.\]
\end{lem}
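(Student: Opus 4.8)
The plan is to prove this by decomposing $\TTT^d$ into $\lambda^{-1}$-cubes, on each of which the high-frequency factor $g(\lambda\cdot)$ runs through a full period while $f$ is essentially constant. Since the claimed inequality is stated for $L^p$ norms whereas the natural estimates live at the level of $p$-th powers, I would first record the elementary inequality
\[
|a-b|^p\le \big|a^p-b^p\big|,\qquad a,b\ge 0,\ 1\le p<\infty,
\]
which follows from the concavity on $[0,1]$ of $t\mapsto 1-t^p-(1-t)^p$ (its second derivative is $-p(p-1)\big(t^{p-2}+(1-t)^{p-2}\big)\le 0$) together with its vanishing at $t=0,1$, after setting $t=b/a$. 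Thanks to this reduction it suffices to show
\[
\Big|\,\|fg(\lambda\cdot)\|_{L^p}^p-\|f\|_{L^p}^p\|g\|_{L^p}^p\,\Big|\ \lesssim_{p,d}\ \lambda^{-1}\,\|f\|_{C^0}^{p-1}\|f\|_{C^1}\,\|g\|_{L^p}^p;
\]
the endpoint $p=\infty$ is trivial since then $\lambda^{-1/p}=1$ and both $\|fg(\lambda\cdot)\|_{L^\infty}$ and $\|f\|_{L^\infty}\|g\|_{L^\infty}$ lie in $[0,\|f\|_{L^\infty}\|g\|_{L^\infty}]$, while $\|f\|_{L^\infty}\le\|f\|_{C^1}$.

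For the $p$-th-power bound I would use $\int_{\TTT^d}|g(\lambda x)|^p\,\dd x=\|g\|_{L^p}^p$ to write
\[
\|fg(\lambda\cdot)\|_{L^p}^p-\|f\|_{L^p}^p\|g\|_{L^p}^p=\int_{\TTT^d}|f(x)|^p\big(|g(\lambda x)|^p-\|g\|_{L^p}^p\big)\,\dd x,
\]
and then split $\TTT^d$ into the $\lambda^d$ cubes $Q_j$ of side $\lambda^{-1}$ with corners in $\lambda^{-1}\ZZ^d$. The decisive structural point is that on each $Q_j$ the change of variables $y=\lambda x$ (a bijection of $Q_j$ onto $\TTT^d$, since $\lambda$ is a positive integer) gives $\int_{Q_j}\big(|g(\lambda x)|^p-\|g\|_{L^p}^p\big)\,\dd x=0$, so if $x_j$ denotes the center of $Q_j$ we may subtract the constant $|f(x_j)|^p$ for free:
\[
\Big|\int_{Q_j}|f|^p\big(|g(\lambda\cdot)|^p-\|g\|_{L^p}^p\big)\,\dd x\Big|\le\Big(\sup_{Q_j}\big||f|^p-|f(x_j)|^p\big|\Big)\int_{Q_j}\big(|g(\lambda x)|^p+\|g\|_{L^p}^p\big)\,\dd x.
\]
Here the mean value theorem for $|f|^p$, whose Lipschitz constant on $\TTT^d$ is $\lesssim_p\|f\|_{C^0}^{p-1}\|\nabla f\|_{C^0}$, bounds the supremum by $\lesssim_{p,d}\lambda^{-1}\|f\|_{C^0}^{p-1}\|f\|_{C^1}$, while $y=\lambda x$ again bounds the integral by $2\lambda^{-d}\|g\|_{L^p}^p$. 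Summing over the $\lambda^d$ cubes yields exactly the displayed $p$-th-power estimate.

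Finally I would take $p$-th roots: combining the two steps above gives
\[
\Big|\,\|fg(\lambda\cdot)\|_{L^p}-\|f\|_{L^p}\|g\|_{L^p}\,\Big|\ \lesssim_{p,d}\ \lambda^{-1/p}\,\|f\|_{C^0}^{(p-1)/p}\|f\|_{C^1}^{1/p}\,\|g\|_{L^p},
\]
and since $\|f\|_{C^0}\le\|f\|_{C^1}$ the prefactor $\|f\|_{C^0}^{(p-1)/p}\|f\|_{C^1}^{1/p}$ is $\le\|f\|_{C^1}$, which closes the proof. I do not anticipate any genuine obstacle: this is a standard improved-\Holder estimate, and the only points needing a little care are the two passages between norms and $p$-th powers, the fact that the implicit constant is allowed to depend on $p$ and $d$ (which the statement's $\lesssim$ permits), and checking that the cube change of variables is exact because $\lambda\in\ZZ_{>0}$. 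The single real idea, as opposed to bookkeeping, is the cube decomposition together with the observation that $|g(\lambda\cdot)|^p$ integrates to its own mean over each cube — that is what produces the gain $\lambda^{-1/p}$.
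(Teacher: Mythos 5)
Your proof is correct, and it follows essentially the same route as the standard argument in the cited references \cite{1Cheskidov, 13Nonuniqueness} (the paper itself states the lemma without proof): reduce to the $p$-th--power version via the elementary inequality $|a-b|^p\le|a^p-b^p|$, then exploit that $|g(\lambda\cdot)|^p$ has mean $\|g\|_{L^p}^p$ on each side-$\lambda^{-1}$ cube to subtract $|f(x_j)|^p$ for free and gain the factor $\lambda^{-1}$ from the Lipschitz bound on $|f|^p$. All the individual steps (the concavity argument, the exact cube change of variables using $\lambda\in\ZZ_{>0}$, and the separate treatment of $p=\infty$) check out.
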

\begin{lem}[\cite{2Beekie,13Nonuniqueness}]\label{holder2}
Fixed $\kappa>\lambda\geq1$ and $p\in (1,2]$. Suppose that there exists an integer $M$ with $\kappa^{M-2}>\lambda^{M}$. Let $f$ be a $\mathbb{T}^3$-periodic function so that there exists a constant $C_f$ such that
$$\|D^jf\|_{L^{\infty}}\leq C_f\lambda^j,~~~ j\in[0,M].$$
Assume furthermore that $\int_{\mathbb{T}^3}f(x)\mathbb{P}_{\geq \kappa}g(x)dx=0$ and $g$ is a $(\mathbb{T}/\kappa)^3$-periodic function. Then, it holds thst
$$\||\nabla|^{-1}(f\mathbb{P}_{\geq \kappa}g)\|_{L^p}\lesssim  C_f\frac{\|g\|_{L^p}}{\kappa},$$
where the implicit constant is universal.
\end{lem}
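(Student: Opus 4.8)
\emph{Proof plan.} I would prove this by splitting the slowly oscillating factor $f$ at frequency $\kappa/2$ and estimating the two resulting pieces of $f\,\mathbb{P}_{\ge\kappa}g$ separately: the low-frequency part of $f$ times $\mathbb{P}_{\ge\kappa}g$ is automatically localized at frequencies $\gtrsim\kappa$, so $|\nabla|^{-1}$ gains a factor $\kappa^{-1}$; the high-frequency part of $f$ is so small — by the $C^M$ bound — that a crude estimate suffices, and the assumption $\kappa^{M-2}>\lambda^M$ is exactly what is needed to absorb it. To set up, put $h:=\mathbb{P}_{\ge\kappa}g$. Since $g$ is $(\mathbb{T}/\kappa)^3$-periodic, every nonzero Fourier mode of $g$ lies in $\kappa\mathbb{Z}^3$, so $h=g-\int_{\mathbb T^3}g\,\dd x$, $\widehat h$ is supported in $\{|\xi|\ge\kappa\}$, and $\|h\|_{L^p}\lesssim\|g\|_{L^p}$. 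The hypothesis $\int_{\mathbb T^3}fh\,\dd x=0$ makes $fh$ mean-free, so $|\nabla|^{-1}(fh)$ is well defined. Using a smooth Littlewood--Paley decomposition $\mathrm{Id}=P_{<\kappa/2}+P_{\ge\kappa/2}$, I would write $f=f_{\mathrm{lo}}+f_{\mathrm{hi}}$ with $f_{\mathrm{lo}}=P_{<\kappa/2}f$, $f_{\mathrm{hi}}=P_{\ge\kappa/2}f$, and treat $|\nabla|^{-1}(f_{\mathrm{lo}}h)$ and $|\nabla|^{-1}(f_{\mathrm{hi}}h)$ in turn.

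For the low piece, since $\widehat{f_{\mathrm{lo}}}$ is supported in $\{|\xi|<\kappa/2\}$ and $\widehat h$ in $\{|\xi|\ge\kappa\}$, the product $f_{\mathrm{lo}}h$ has Fourier support in $\{|\xi|>\kappa/2\}$. On that frequency range $\kappa|\nabla|^{-1}$ coincides with the Fourier multiplier $m_\kappa(D)$, $m_\kappa(\xi)=n(\xi/\kappa)$, where $n(\zeta)=(2\pi|\zeta|)^{-1}\varphi(|\zeta|)$ and $\varphi$ is a smooth cutoff supported in $\{|\zeta|\ge1/4\}$ and equal to $1$ on $\{|\zeta|\ge1/2\}$; since $n$ is a symbol of order $-1$, the H\"ormander--Mikhlin theorem on the torus gives that $m_\kappa(D)$ is bounded on $L^p(\mathbb T^3)$ for $1<p<\infty$ with a bound independent of $\kappa$ (the scaling $\xi\mapsto\xi/\kappa$ preserves the Mikhlin condition). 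Combined with H\"older and $\|f_{\mathrm{lo}}\|_{L^\infty}\lesssim\|f\|_{L^\infty}\le C_f$ (a smooth low-frequency truncation is convolution with an $L^1$ kernel, uniformly in $\kappa$), this yields
\[
\||\nabla|^{-1}(f_{\mathrm{lo}}h)\|_{L^p}\lesssim\kappa^{-1}\|f_{\mathrm{lo}}h\|_{L^p}\lesssim\kappa^{-1}\|f\|_{L^\infty}\|h\|_{L^p}\lesssim C_f\,\kappa^{-1}\|g\|_{L^p}.
\]

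For the high piece, the assumption $\|D^Mf\|_{L^\infty}\le C_f\lambda^M$ and the standard Bernstein-type bound give $\|f_{\mathrm{hi}}\|_{L^\infty}=\|P_{\ge\kappa/2}f\|_{L^\infty}\lesssim(\kappa/2)^{-M}\|D^Mf\|_{L^\infty}\lesssim C_f(\lambda/\kappa)^M$. Since $f_{\mathrm{lo}}h$ has frequency support away from $0$, $\int f_{\mathrm{lo}}h\,\dd x=0$, hence $\int f_{\mathrm{hi}}h\,\dd x=\int fh\,\dd x=0$ and $|\nabla|^{-1}(f_{\mathrm{hi}}h)$ is well defined; and because on $\mathbb T^3$ the convolution kernel of $|\nabla|^{-1}$ has only an $|x|^{-2}$ singularity, which is locally integrable, $|\nabla|^{-1}$ is bounded $L^p(\mathbb T^3)\to L^p(\mathbb T^3)$ by Young's inequality. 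Therefore
\[
\||\nabla|^{-1}(f_{\mathrm{hi}}h)\|_{L^p}\lesssim\|f_{\mathrm{hi}}h\|_{L^p}\le\|f_{\mathrm{hi}}\|_{L^\infty}\|h\|_{L^p}\lesssim C_f(\lambda/\kappa)^M\|g\|_{L^p}.
\]
The hypothesis $\kappa^{M-2}>\lambda^M$ is precisely $(\lambda/\kappa)^M<\kappa^{-2}\le\kappa^{-1}$, so the high piece is $\lesssim C_f\kappa^{-1}\|g\|_{L^p}$ as well, and summing the two contributions gives $\||\nabla|^{-1}(f\,\mathbb{P}_{\ge\kappa}g)\|_{L^p}\lesssim C_f\|g\|_{L^p}/\kappa$ with a universal constant (depending only on $p$).

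The two displayed estimates are routine once the decomposition is in place; the point that needs care is the bookkeeping with \emph{smooth} rather than sharp frequency cutoffs, so that the low-frequency truncation of $f$ is bounded on $L^\infty$ and the restricted operator $\kappa|\nabla|^{-1}$ is a genuine Mikhlin multiplier, together with the observation that $|\nabla|^{-1}$ is bounded on $L^p(\mathbb T^3)$ — a fact special to the compact domain, where only local integrability of the kernel is required rather than the Hardy--Littlewood--Sobolev inequality. The quantitative assumption $\kappa^{M-2}>\lambda^M$ enters only here, to absorb the loss $(\lambda/\kappa)^M$ coming from trading $M$ derivatives onto $f$ against the gain $\kappa^{-1}$; any crude, $\kappa$-independent bound for $|\nabla|^{-1}$ in the high-frequency term would be enough, and in particular the split into just two frequency blocks avoids any geometric series in $\lambda/\kappa$.
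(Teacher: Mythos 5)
Your argument is correct and is essentially the standard proof of this lemma from the cited references (it appears as Lemma B.1 in Buckmaster--Vicol \cite{13Nonuniqueness}); the paper itself states the lemma by citation without reproducing a proof. The decomposition $f=P_{<\kappa/2}f+P_{\ge\kappa/2}f$, the Mikhlin multiplier bound for $\kappa|\nabla|^{-1}$ on frequencies $\gtrsim\kappa$, and the use of $\kappa^{M-2}>\lambda^M$ to absorb the high-frequency remainder are exactly the steps of the original argument, and your bookkeeping (mean-zero of each piece, $L^p$-boundedness of $|\nabla|^{-1}$ on the torus) is sound.
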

\subsection{Symmetric and skew-symmetric inverse divergence}\label{inversedivergence}
\label{ss:inverse-div}
We recall  \cite{2Beekie} the following pseudodifferential operator of order $-1$,
\begin{align}\label{Ru}
    &{\mathcal{R} v}^{kl} = \partial_k\Delta^{-1}v^l+ \partial_l\Delta^{-1}v^k-\frac{1}{2}
    (\delta_{kl}+\partial_k\partial_l\Delta^{-1})\div\Delta^{-1}v,\\
       &\mathcal{R}_{a} u = \Delta^{-1} (\nabla u + \nabla u^\TT ) - \frac12\Delta^{-2} \nabla^2 \nabla\cdot u  - \frac12 \Delta^{-1} (\nabla\cdot u )I_{3\times 3},
\end{align}
$\mathcal{R}$ is a matrix-valued right inverse of the divergence operator for mean-free vector fields $v$, in the sense that
$\div\mathcal R v= v $. While $\mathcal{R}_{a}$ is also a matrix-valued right inverse of the divergence operator for divergence free vector fields $v$, in the sense that $\div\mathcal{R}_{a} u= u $. $\mathcal Rv$ is traceless and symmetric, and $\mathcal{R}_{a}u$ is antisymmetric.

\addcontentsline{toc}{section}{\refname}
\bibliography{reference}
\bibliographystyle{plain}

\end{document}